\documentclass[11pt,letterpaper]{amsart}
\usepackage{amsthm}
\usepackage{amsmath}
\usepackage{amssymb}
\usepackage{amsfonts}
\usepackage{relsize}
\usepackage{graphicx}
\usepackage{mathrsfs}
\usepackage{comment}
\usepackage{tikz}
\usetikzlibrary{calc}
\usepackage{enumitem}
\usepackage{amsfonts}
\usepackage{mathtools}
\usepackage{hyperref}
\usepackage{cleveref}
\usepackage{todonotes}
\usepackage[utf8]{inputenc}
\usepackage[top=1.5in,bottom=1.5in,left=1.5in, right=1.5in]{geometry}


\newcommand{\Z}{\mathbb{Z}}
\newcommand{\C}{\mathbb{C}}

\newcommand{\bs}{\boldsymbol}
\newcommand{\D}{\mathcal{D}}
\newcommand{\mc}{\mathcal}
\newcommand{\splt}{\mathsf{split}}
\newcommand{\merge}{\mathsf{merge}}
\renewcommand{\d}{\boldsymbol{d}}
\renewcommand{\tilde}{\widetilde}

\DeclareMathOperator{\T}{\mathcal{T}}
\DeclareMathOperator{\partn}{\mathsf{str}}
\DeclareMathOperator{\toprow}{\mathsf{top}}
\DeclareMathOperator{\colr}{\mathsf{col}}
\DeclareMathOperator{\Fl}{Fl}
\DeclareMathOperator{\gr}{Gr}

\newcommand\precdot{\mathrel{\ooalign{$\prec$\cr
  \hidewidth\raise0.001ex\hbox{$\cdot\mkern0.6mu$}\cr}}}

\newtheorem{theorem}{Theorem}[section]
\newtheorem{def-prop}[theorem]{Definition-Proposition}
\newtheorem{prop}[theorem]{Proposition}
\crefname{prop}{Proposition}{Propositions}
\newtheorem{conj}[theorem]{Conjecture}
\newtheorem{lemma}[theorem]{Lemma}

\theoremstyle{definition}
\newtheorem{ex}[theorem]{Example}

\newtheorem{defin}[theorem]{Definition}

\theoremstyle{remark}
\newtheorem*{remark}{Remark}

\begin{document}
\title{Interlacing triangles, Schubert puzzles, and graph colorings}

\author{Christian Gaetz}
\address[Gaetz]{Department of Mathematics, University of California, Berkeley, CA, USA.}
\email{\href{mailto:gaetz@berkeley.edu}{{\tt gaetz@berkeley.edu}}}

\author{Yibo Gao}
\thanks{Y.G. acknowledges funding from NSFC Grant no. 12471309.}
\address[Gao]{Beijing International Center for Mathematical Research, Peking University, Beijing, China.}
\email{\href{mailto:gaoyibo@bicmr.pku.edu.cn}{{\tt gaoyibo@bicmr.pku.edu.cn}}}

\date{\today}

\begin{abstract}
We show that \emph{interlacing triangular arrays}, introduced by Aggarwal--Borodin--Wheeler to study certain probability measures, can be used to compute structure constants for multiplying Schubert classes in the $K$-theory of Grassmannians, in the cohomology of their cotangent bundles, and in the cohomology of partial flag varieties. Our results are achieved by establishing a splitting lemma, allowing for interlacing triangular arrays of high rank to be decomposed into arrays of lower rank, and by constructing a bijection between interlacing triangular arrays of rank 3 with certain proper vertex colorings of the triangular grid graph that factors through generalizations of Knutson--Tao puzzles. Along the way, we prove one enumerative conjecture of Aggarwal--Borodin--Wheeler and disprove another.
\end{abstract}

\maketitle

\section{Introduction}
\label{sec:intro}

The \textit{LLT polynomials} are a $1$-parameter family of symmetric polynomials introduced by Lascoux, Leclerc, and Thibon \cite{LLT}. They have close connections to Macdonald polynomials \cite{haglund-haiman-loehr,haglund-haiman-loehr-remmel-ulyanov} and Kazhdan--Lusztig theory \cite{grojnowski-haiman}, among other areas of representation theory and geometry.

In recent work, Aggarwal, Borodin, and Wheeler \cite{Aggarwal-Borodin-Wheeler} studied probability measures arising from the Cauchy identity for LLT polynomials. They show that these measures asymptotically split into a continuous part, given by a product of GUE corners processes, and a discrete part, supported on \textit{interlacing triangular arrays}. They conjectured that these arrays are equinumerous with vertex colorings of the triangular grid graph in the ``rank-$3$" case, and with vertex colorings of another grid graph in the rank-$4$ case. We construct a bijection between interlacing arrays and vertex colorings proving the first conjecture, and we disprove the second.

Our bijection factors through intermediate objects which are certain \textit{edge} labelings of the triangular grid graph. We recognize these edge labelings as cryptomorphic to certain Schubert calculus \emph{puzzles} \cite{Knutson-Tao-equivariant, Knutson-Tao-Woodward}. We apply various geometric interpretations of puzzles \cite{halacheva-knutson-zinn-justin,Knutson-Zinn-Justin-2,Vakil, wheeler-zinn-justin} to prove that the corresponding families of interlacing triangular arrays compute structure constants in cohomology $H^*(\gr(d,n))$ and $K$-theory $K(\gr(d,n))$ of Grassmannians, in the (localized) cohomology $H^{*\text{loc}}_{\C^{\times}}(T^*\gr(d,n))$ of their cotangent bundles, and for the multiplication in the cohomology of partial flag varieties of classes pulled back from smaller partial flag varieties.

\subsection{Interlacing triangular arrays and graph colorings}

\begin{figure}
    \centering
\begin{tikzpicture}[scale=1]
\def\sep{1.50000000000000};
\draw[green!80,thin](0.000000000000000,0.000000000000000)--(3.00000000000000,0.000000000000000);
\draw[green!80,thin](3.00000000000000,0)--(1.50000000000000,-2.59800000000000);
\draw[green!80,thin](0.000000000000000,0)--(1.50000000000000,-2.59800000000000);
\draw[green!80,thin](0.500000000000000,-0.866000000000000)--(2.50000000000000,-0.866000000000000);
\draw[green!80,thin](2.00000000000000,0)--(1.00000000000000,-1.73200000000000);
\draw[green!80,thin](1.00000000000000,0)--(2.00000000000000,-1.73200000000000);
\draw[green!80,thin](1.00000000000000,-1.73200000000000)--(2.00000000000000,-1.73200000000000);
\draw[green!80,thin](1.00000000000000,0)--(0.500000000000000,-0.866000000000000);
\draw[green!80,thin](2.00000000000000,0)--(2.50000000000000,-0.866000000000000);
\draw[green!80,thin](1.50000000000000,-2.59800000000000)--(1.50000000000000,-2.59800000000000);
\draw[green!80,thin](0.000000000000000,0)--(0.000000000000000,0.000000000000000);
\draw[green!80,thin](3.00000000000000,0)--(3.00000000000000,0.000000000000000);
\draw[green!80,thin](4.50000000000000,0.000000000000000)--(7.50000000000000,0.000000000000000);
\draw[green!80,thin](7.50000000000000,0)--(6.00000000000000,-2.59800000000000);
\draw[green!80,thin](4.50000000000000,0)--(6.00000000000000,-2.59800000000000);
\draw[green!80,thin](5.00000000000000,-0.866000000000000)--(7.00000000000000,-0.866000000000000);
\draw[green!80,thin](6.50000000000000,0)--(5.50000000000000,-1.73200000000000);
\draw[green!80,thin](5.50000000000000,0)--(6.50000000000000,-1.73200000000000);
\draw[green!80,thin](5.50000000000000,-1.73200000000000)--(6.50000000000000,-1.73200000000000);
\draw[green!80,thin](5.50000000000000,0)--(5.00000000000000,-0.866000000000000);
\draw[green!80,thin](6.50000000000000,0)--(7.00000000000000,-0.866000000000000);
\draw[green!80,thin](6.00000000000000,-2.59800000000000)--(6.00000000000000,-2.59800000000000);
\draw[green!80,thin](4.50000000000000,0)--(4.50000000000000,0.000000000000000);
\draw[green!80,thin](7.50000000000000,0)--(7.50000000000000,0.000000000000000);
\draw[green!80,thin](9.00000000000000,0.000000000000000)--(12.0000000000000,0.000000000000000);
\draw[green!80,thin](12.0000000000000,0)--(10.5000000000000,-2.59800000000000);
\draw[green!80,thin](9.00000000000000,0)--(10.5000000000000,-2.59800000000000);
\draw[green!80,thin](9.50000000000000,-0.866000000000000)--(11.5000000000000,-0.866000000000000);
\draw[green!80,thin](11.0000000000000,0)--(10.0000000000000,-1.73200000000000);
\draw[green!80,thin](10.0000000000000,0)--(11.0000000000000,-1.73200000000000);
\draw[green!80,thin](10.0000000000000,-1.73200000000000)--(11.0000000000000,-1.73200000000000);
\draw[green!80,thin](10.0000000000000,0)--(9.50000000000000,-0.866000000000000);
\draw[green!80,thin](11.0000000000000,0)--(11.5000000000000,-0.866000000000000);
\draw[green!80,thin](10.5000000000000,-2.59800000000000)--(10.5000000000000,-2.59800000000000);
\draw[green!80,thin](9.00000000000000,0)--(9.00000000000000,0.000000000000000);
\draw[green!80,thin](12.0000000000000,0)--(12.0000000000000,0.000000000000000);
\node at (0.000000000000000,-0.000000000000000) {$1$};
\node at (1.00000000000000,-0.000000000000000) {$2$};
\node at (2.00000000000000,-0.000000000000000) {$1$};
\node at (3.00000000000000,-0.000000000000000) {$3$};
\node at (0.500000000000000,-0.866000000000000) {$1$};
\node at (1.50000000000000,-0.866000000000000) {$2$};
\node at (2.50000000000000,-0.866000000000000) {$1$};
\node at (1.00000000000000,-1.73200000000000) {$1$};
\node at (2.00000000000000,-1.73200000000000) {$2$};
\node at (1.50000000000000,-2.59800000000000) {$1$};
\node at (4.50000000000000,-0.000000000000000) {$1$};
\node at (5.50000000000000,-0.000000000000000) {$3$};
\node at (6.50000000000000,-0.000000000000000) {$3$};
\node at (7.50000000000000,-0.000000000000000) {$2$};
\node at (5.00000000000000,-0.866000000000000) {$3$};
\node at (6.00000000000000,-0.866000000000000) {$3$};
\node at (7.00000000000000,-0.866000000000000) {$1$};
\node at (5.50000000000000,-1.73200000000000) {$3$};
\node at (6.50000000000000,-1.73200000000000) {$1$};
\node at (6.00000000000000,-2.59800000000000) {$3$};
\node at (9.00000000000000,-0.000000000000000) {$1$};
\node at (10.0000000000000,-0.000000000000000) {$2$};
\node at (11.0000000000000,-0.000000000000000) {$3$};
\node at (12.0000000000000,-0.000000000000000) {$2$};
\node at (9.50000000000000,-0.866000000000000) {$2$};
\node at (10.5000000000000,-0.866000000000000) {$3$};
\node at (11.5000000000000,-0.866000000000000) {$2$};
\node at (10.0000000000000,-1.73200000000000) {$3$};
\node at (11.0000000000000,-1.73200000000000) {$2$};
\node at (10.5000000000000,-2.59800000000000) {$2$};
\end{tikzpicture}
    \caption{An interlacing triangular array $T$ of rank $3$ and height $4$. }
    \label{fig:interlacing-example}
\end{figure}

Let $[m] \coloneqq \{1,\ldots,m\}$. For integers $m,n \geq 1$ and tuples $\lambda^{(i)}\in [m]^n$ for $i=1,\ldots,m$, we write $\T_{m,n}(\lambda^{(1)},\ldots,\lambda^{(m)})$ for the set of interlacing triangular arrays of rank $m$ and height $n$ with top row $\lambda^{(1)},\ldots,\lambda^{(m)}$ (see \Cref{sec:prelim-interlacing}). We write $\T_{m,n}$ for $\bigsqcup_{\bs{\lambda}} \T_{m,n}(\bs{\lambda})$.

\begin{conj}[Conj.~A.3 of Aggarwal--Borodin--Wheeler \cite{Aggarwal-Borodin-Wheeler}]
\label{conj:3-color-conjecture}   
For $n \geq 1$, we have
\[
|\T_{3,n}|=\frac{1}{4} \left|\{\text{proper vertex $4$-colorings of } \Delta_n \} \right|,
\]
where $\Delta_n$ denotes the equilateral triangular grid graph with $n$ edges on each side.
\end{conj}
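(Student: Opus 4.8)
The plan is to construct an explicit bijection proving \Cref{conj:3-color-conjecture}, with the factor $\tfrac14$ accounted for by a free group action on the coloring side. First, observe that $\mathbb{Z}/4 \cong \langle (1\,2\,3\,4)\rangle \le S_4$ acts on the proper $4$-colorings of $\Delta_n$ by permuting colors, and since every nonidentity element of this subgroup is fixed-point-free on $\{1,2,3,4\}$ it fixes no coloring, so the action is free. Thus the number of proper $4$-colorings of $\Delta_n$ is four times the number of $\mathbb{Z}/4$-orbits, equivalently four times the number $|\mathcal{C}_n|$ of proper $4$-colorings in which a fixed corner vertex $v_0$ of $\Delta_n$ receives color $1$ (exactly one coloring in each orbit has this property, as the values at $v_0$ run over all four colors). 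So it suffices to biject $\T_{3,n}$ with $\mathcal{C}_n$.

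Next I would pass to an edge model on the coloring side. A proper $4$-coloring $c$ determines — and, given $c(v_0)$, is determined by — its \emph{discrete gradient}: the labeling of the oriented edges of $\Delta_n$ sending $u \to v$ to $c(v)-c(u) \in \mathbb{Z}/4 \setminus \{0\} = \{1,2,3\}$. As $\Delta_n$ is simply connected, this identifies $\mathcal{C}_n$ with the set of $\{1,2,3\}$-labelings of the edges of $\Delta_n$ whose signed sum around each unit triangle vanishes mod $4$. Such a labeling is a tiling of $\Delta_n$ by the finitely many admissible unit triangles (those whose three edge-labels sum to $0 \bmod 4$), and I would recognize these tiles as a generalization of Knutson--Tao puzzle pieces, producing $\mathcal{C}_n \cong \{\text{generalized puzzles of size } n\}$, partitioned according to the three boundary edge-strings.

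On the array side, I would use the splitting lemma to unfold a rank-$3$ array in $\T_{3,n}(\bs\lambda)$ into a rank-$2$ sub-array together with its remaining interlacing rows, and read this decomposition off as a filling of $\Delta_n$ by the same local pieces, recording the top row $\bs\lambda = (\lambda^{(1)},\lambda^{(2)},\lambda^{(3)})$ along the three boundary edge-strings. Comparing the interlacing inequalities in $\T_{3,n}(\bs\lambda)$ against the explicit list of admissible pieces would yield a bijection $\T_{3,n}(\bs\lambda) \cong \{\text{generalized puzzles with boundary } \bs\lambda\}$; summing over all $\bs\lambda$ and composing with the identification above completes the proof. As a consistency check, a rank-$3$ height-$n$ array has $3\binom{n+1}{2}$ entries, exactly the edge count of $\Delta_n$, and the case $n=1$ reads $|\T_{3,1}| = \tfrac14 \cdot 24 = 6$.

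The main obstacle is the middle identification with puzzles: pinning down precisely which generalization of Knutson--Tao puzzles occurs, and verifying that the local interlacing conditions correspond exactly to the admissible triangular tiles — no more and no fewer — so that the array-to-puzzle and puzzle-to-array constructions are mutually inverse. This is a finite but delicate local analysis. Two well-definedness points also need attention: that every array in $\T_{3,n}(\bs\lambda)$ really admits the claimed decomposition (uniform applicability of the splitting lemma in rank $3$), and that every admissible edge-labeling is the gradient of an honest proper coloring (immediate from simple-connectedness of $\Delta_n$, but worth stating).
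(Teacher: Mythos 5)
Your reduction of the factor $\tfrac14$ to fixing the color at a base vertex is essentially the paper's observation (the paper simply notes that the colorings in $\bigsqcup_{\bs\kappa}\mc{C}_n(\bs\kappa)$ are precisely those whose base vertex gets the first color), and the consistency checks are a nice touch. However, the middle and final steps of your outline --- which you rightly flag as the ``main obstacle'' --- are exactly where the content of the proof lies, and the two tools you reach for there do not do what you need.

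The ``discrete gradient'' should be taken in the Klein group $(\Z/2)^2$, not in $\Z/4$. The paper's map $\mathscr{P}$ assigns to an \emph{unoriented} edge $\{v,v'\}$ the value $c(v)+c(v')$ under the identification $\{0,1,2,3\}\cong(\Z/2)^2$ (this is what the referee's formula $i\mapsto i$ or $6-i$ encodes); since $x+y=x-y$ in characteristic two, this is orientation-free, and one checks immediately that for a proper coloring the three edge labels around each unit triangle are automatically the three distinct nonzero Klein elements --- which is exactly the $1/2/3$-puzzle condition of \Cref{def:123-puzzles}. Your $\Z/4$-gradient $c(v)-c(u)$ changes by sign when the edge is reversed, so you would first have to fix an orientation convention; once you do, the admissible local pieces (triples summing to $0\bmod 4$, e.g.\ permutations of $\{1,1,2\}$ or $\{2,3,3\}$) are \emph{not} the all-labels-distinct pieces of a $1/2/3$-puzzle. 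That mismatch would have to be reconciled before the rest of your plan could proceed.

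More seriously, the array-to-puzzle bijection cannot be obtained from the splitting lemma. \Cref{lem:splitting-interlacing} applies only to top rows $\bs\lambda$ of a very constrained form (each $\lambda^{(i)}$ drawn from three prescribed letters determined by a dimension vector $\d$), whereas Conjecture~A.3 needs $|\T_{3,n}(\bs\lambda)|$ for \emph{every} $\bs\lambda\in[3]^n$; moreover, in rank $3$ the lemma splits a rank-$3$ array into a rank-$3$ array together with a rank-$2$ array, which is circular for your purposes. What actually carries the argument is the direct construction of $\mathscr{T}:\mc{P}_n\to\T_{3,n}$ (\Cref{def:puzzle-to-array-map}, reading edge labels clockwise around nested $\Delta_k$'s) and its inverse built layer by layer via \Cref{lem:one-row}: a pair of interlacing rows admits a unique proper filling of one strip of unit triangles. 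That inductive local analysis --- including the verification that $\mathscr{T}(P)$ is interlacing, which is a nontrivial argument by contradiction along a diagonal --- is absent from your sketch and is the heart of the paper's proof of \Cref{thm:psi-is-bijection}, hence of the conjecture.
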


Our first main theorem resolves and significantly refines and extends \Cref{conj:3-color-conjecture}.

\begin{theorem}
\label{thm:intro-rank-3-bijections}
Let $n \geq 1$ and fix $\lambda^{(1)},\lambda^{(2)},\lambda^{(3)} \in [3]^n$. Then the following sets of objects are in bijection:
\begin{itemize}
    \item[(1)] Interlacing triangular arrays $\T_{3,n}(\bs{\lambda})$ of rank $3$ with top row $\bs{\lambda}$;
    \item[(2)] $1/2/3$-puzzles $\mc{P}_n(\bs{\lambda})$ with boundary conditions $\bs{\lambda}$;
    \item[(3)] $0/1/10$-puzzles $\tilde{\mc{P}}_n(\bs{\xi})$ with boundary conditions $\bs{\xi}=\partn(\bs{\lambda})$;
    \item[(4)] Proper vertex $4$-colorings $\mc{C}_n(\bs{\kappa})$ of $\Delta_n$ with boundary colors $\bs{\kappa}=\colr({\bs{\lambda}})$.
\end{itemize}
\end{theorem}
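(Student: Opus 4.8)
The plan is to prove the four-way equivalence by building an explicit chain of bijections $(1)\leftrightarrow(2)\leftrightarrow(3)\leftrightarrow(4)$ in which every map is \emph{local}: it is induced by a fixed dictionary between the elementary pieces — unit triangles, rhombi, or local windows of array entries — out of which the objects on $\Delta_n$ are assembled. Because a local map matches up the pieces along every shared edge, it automatically transports boundary data to boundary data, and this is exactly what forces the matching of $\bs{\lambda}$, $\bs{\xi}=\partn(\bs{\lambda})$ and $\bs{\kappa}=\colr(\bs{\lambda})$; composing the three maps then yields the bijection $(1)\leftrightarrow(4)$. Summing over all boundaries recovers \Cref{conj:3-color-conjecture}: the factor of $\frac14$ comes precisely from the free $\Z/4\Z$-translation action on colorings, once one checks that $\colr$ selects a single representative from each orbit of proper colorings of the boundary cycle.

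I would handle the color--label correspondence $(3)\leftrightarrow(4)$ first, as it is the most transparent. Identify the four colors with $\Z/4\Z$, orient every edge of $\Delta_n$, and label each edge by the difference of its two endpoint colors; properness forces every label into $\{1,2,3\}$, and the three labels around any unit triangle satisfy a fixed congruence modulo $4$. A short case analysis identifies the admissible labelled triangles, after the relabelling encoded by $\partn$, with the $0/1/10$ puzzle pieces, so proper $4$-colorings with a prescribed boundary biject with $0/1/10$-puzzles with the matching boundary. The one point to check is that the inverse map — reconstructing a coloring from the edge-labelled puzzle — is well defined, which holds because the triangulated region bounded by $\Delta_n$ is simply connected, so the sum-around-faces condition lets the labels be integrated along any path to a unique coloring normalized at a chosen boundary vertex. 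The step $(2)\leftrightarrow(3)$ is then a pure relabelling of puzzle pieces: write out the dictionary between $1/2/3$ pieces and $0/1/10$ pieces, verify it is a bijection on each of the finitely many piece types and is consistent on shared edges, and conclude that it glues; this is the geometric origin of the map $\partn$ on the boundary.

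The heart of the proof is $(1)\leftrightarrow(2)$. I would define the map from a rank-$3$ interlacing array to a $1/2/3$-puzzle by a purely local rule: each unit triangle of $\Delta_n$ receives a piece determined by the relevant entries of the array's three constituent triangular arrays, and the interlacing inequalities between consecutive rows of the array translate exactly into the conditions that each of these pieces is admissible and that adjacent pieces agree along their shared edge. The forward direction of the resulting correspondence is thus a finite verification of inequalities. The reverse direction --- constructing the unique interlacing array underlying a given $1/2/3$-puzzle --- is the step I expect to be the main obstacle: it requires showing that the local puzzle constraints impose no hidden global obstruction to integrability. For this I would proceed by induction, reconstructing the array one slab of $\Delta_n$ at a time and checking that the interlacing hypotheses propagate from one row to the next, so that at each stage the array extends in exactly one way; decomposing the array into its rank-$1$ and rank-$2$ parts, in the spirit of the splitting/rank-reduction idea, should help identify which puzzle pieces record which portion of the array. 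Composing the three local bijections then yields $(1)\leftrightarrow(4)$, with the boundary correspondences $\bs{\xi}=\partn(\bs{\lambda})$ and $\bs{\kappa}=\colr(\bs{\lambda})$ read off directly from the piece dictionaries.
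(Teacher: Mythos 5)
Your chain $(1)\leftrightarrow(2)\leftrightarrow(3)\leftrightarrow(4)$ matches the paper's plan, but the crucial step $(1)\leftrightarrow(2)$ is misdescribed in a way that leaves a real gap. There is no ``purely local rule'' assigning each unit triangle of $\Delta_n$ a puzzle piece read off from a fixed window of array entries: the array has $3\binom{n+1}{2}$ entries, matching the edge count of $\Delta_n$, but the entries $T^{(1)}_{p,k}$ and $T^{(m)}_{k-p+1,k}$ are constant in $k$ (\Cref{lem:interlacing-side-triangles}), so a large fraction of the entries are forced duplicates, and correspondingly many edges of $\Delta_n$ (the non-diagonal edges interior to the slabs $\Delta_k\setminus\Delta_{k-1}$) are not recorded by any array entry at all. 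The paper's map $\mathscr{T}$ instead reads the puzzle's labels clockwise around every nested sub-triangle $\Delta_k$ to form the $k$-th row of the array, and the inverse $\mathscr{T}'$ is built slab by slab, with the one-row uniqueness \Cref{lem:one-row} showing that the interlacing condition between rows $k-1$ and $k$ forces a unique proper filling of those unrecorded interior edges. You have the difficulty on the wrong side: it is array $\to$ puzzle --- your ``forward'' direction, which you dismiss as ``a finite verification of inequalities'' --- that requires the slab-by-slab induction, while puzzle $\to$ array is the direct reading map, and what must be proved there is that the resulting array satisfies the multiset and interlacing conditions (a face-counting argument plus a label-propagation contradiction in the paper, not an inequality check). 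Without discovering the nested-triangle reading, the $(1)\leftrightarrow(2)$ step of your outline does not go through, and this is the principal missing ingredient.

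Your $(3)\leftrightarrow(4)$ route via oriented differences in $\Z/4\Z$ is a genuinely different map from the paper's $\mathscr{P}$, which is symmetric in the two endpoint colors (for instance $\{1,2\}\mapsto 3$, whereas a $\Z/4\Z$-difference would give $\pm1$ depending on orientation). This is a plausible alternative, but you would need to commit to an orientation convention and spell out the resulting dictionary with the $0/1/10$-pieces, since the oriented-difference triples around a face are not simply the $1/2/3$-pieces with three distinct labels. Finally, the splitting/rank-reduction idea you invoke is not used for \Cref{thm:intro-rank-3-bijections} at all; \Cref{lem:splitting-interlacing} enters the paper only for the structure-constant theorems at ranks $m\geq4$.
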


The functions $\partn, \colr,$ and $\toprow$ are conversions between the different kinds of indexing data; their definitions can be found in \Cref{sec:triangular-coloring,sec:puzzle-conversion}. To avoid confusion, throughout the paper we draw interlacing triangular arrays in green ($\textcolor{green!80}{\blacksquare}$), $1/2/3$-puzzles in blue ($\textcolor{blue!80}{\blacksquare}$), $0/1/10$-puzzles in cyan ($\textcolor{cyan!80}{\blacksquare}$), and vertex-colored graphs in red ($\textcolor{red!80}{\blacksquare}$).

The colorings $\bigsqcup_{\bs{\kappa}=\colr(\bs{\lambda})} \mc{C}_n(\bs{\kappa})$ from \Cref{thm:intro-rank-3-bijections}(4) are exactly those proper vertex $4$-colorings of $\Delta_n$ in which the base vertex is colored with the first color. The number of these is one fourth the total number of proper vertex $4$-colorings, so \Cref{thm:intro-rank-3-bijections} implies \Cref{conj:3-color-conjecture}. 

The particular bijections underlying \Cref{thm:intro-rank-3-bijections} allow us to give geometric interpretations for certain sets of interlacing triangular arrays (see \Cref{sec:intro-geometry}).

Aggarwal, Borodin, and Wheeler also conjectured a connection between interlacing triangular arrays of rank $4$ and graph colorings. Let $\boxtimes_n$ be the graph obtained from the $n \times n$ square grid graph $\square_n$ by adding the two diagonal edges of each face of $\square_n$.

\begin{conj}[Conj.~A.5 of Aggarwal--Borodin--Wheeler \cite{Aggarwal-Borodin-Wheeler}]
\label{conj:4-color-conjecture-false}
For $n \geq 1$, we have
\begin{equation}
\label{eq:rank-4-coloring-conjecture-false}
    |\T_{4,n}| = \frac{1}{5} \left|\{\text{proper vertex $5$-colorings of } \boxtimes_n \} \right|.
\end{equation}
\end{conj}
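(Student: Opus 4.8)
This statement is in fact false — both its label and the abstract flag it — so what I would carry out is a \emph{disproof}, by pinning down the smallest $n$ at which the claimed identity breaks. Step one is to check small cases. For $n=1$ the graph $\square_1$ is a single $4$-cycle, so $\boxtimes_1$ is the complete graph $K_4$; it has $5\cdot 4\cdot 3\cdot 2 = 120$ proper $5$-colorings, so the right-hand side of \eqref{eq:rank-4-coloring-conjecture-false} equals $24$. Unwinding the definition of \Cref{sec:prelim-interlacing} in this degenerate height gives $|\T_{4,1}|$: if that already differs from $24$ we are done, but I expect instead that the rank-$3$ pattern $|\T_{3,1}| = 3! = \tfrac14|\{\text{$4$-colorings of }\Delta_1\}|$ persists as $|\T_{4,1}| = 4! = 24$, so that the first counterexample should occur at $n=2$.

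For $n=2$ I would compute both sides explicitly. The right-hand side is $\tfrac15 P_{\boxtimes_2}(5)$, where $\boxtimes_2$ has $9$ vertices and $20$ edges (the $3\times 3$ array of vertices, $12$ grid edges, plus $2$ diagonals in each of the $4$ unit faces), so its chromatic polynomial $P_{\boxtimes_2}(q)$ is a routine deletion–contraction (or small transfer-matrix) computation; note $P_{\boxtimes_2}(5)$ is automatically divisible by $5$ since $\boxtimes_2$ is connected. For the left-hand side I would compute $|\T_{4,2}|$ in two independent ways, as a consistency check: (i) by direct enumeration — interlacing arrays of rank $4$ and height $2$ are small enough to list mechanically; and (ii) by applying the splitting lemma to decompose each rank-$4$ array of height $2$ into arrays of rank at most $3$, then invoking \Cref{thm:intro-rank-3-bijections} to re-express the rank-$3$ factors as counts of $1/2/3$-puzzles or $4$-colorings of $\Delta_2$, which are again elementary.

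Comparing the two integers at $n=2$, I expect them to disagree, which disproves \Cref{conj:4-color-conjecture-false}; to make the discrepancy as transparent as possible I would also tabulate the refined counts $|\T_{4,2}(\bs{\lambda})|$ and observe that they admit no ``boundary-to-coloring'' matching with $5$-colorings of $\boxtimes_2$ of the sort that works for rank $3$ in \Cref{thm:intro-rank-3-bijections}. The main obstacle is organizational rather than conceptual: unlike the rank-$3$ situation there is no puzzle rule in hand for rank $4$, so $|\T_{4,2}|$ must be obtained either by a careful hand/computer enumeration or by pushing the splitting lemma far enough to land entirely in the rank-$\leq 3$ world — and the real content of the argument is getting those two independent computations to agree with each other while disagreeing with $\tfrac15 P_{\boxtimes_2}(5)$.
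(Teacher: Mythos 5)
Your overall strategy — enumerate small heights and exhibit a numerical discrepancy — is exactly the paper's approach, but your predicted point of failure is wrong, and as written your argument would not go through. At $n=2$ the two quantities \emph{agree}: the paper computes $|\T_{4,n}| = 1,\,24,\,1344,\,191232$ for $n=0,1,2,3$, while $\tfrac15\left|\{\text{proper vertex $5$-colorings of }\boxtimes_n\}\right| = 1,\,24,\,1344,\,187008$. So the conjecture holds at $n=1$ \emph{and} $n=2$, and the first discrepancy is at $n=3$, where $191232\neq 187008$. Your proposal explicitly stops after $n=2$ (``Comparing the two integers at $n=2$, I expect them to disagree, which disproves the conjecture''), which would leave you with a true equality and no counterexample. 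The conceptual lesson here is worth absorbing: the rank-$4$ conjecture is not wrong in an immediately visible way — it survives two nontrivial checks before failing, which is exactly what makes a bijective explanation attractive and why the paper bothers to build the $\mathscr{D}'$ machinery before falling back on a counterexample.

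There is also a technical gap in your method (ii). The splitting lemma (\Cref{lem:splitting-interlacing}) applies only to $\T_{m,n}(\lambda^{(1)},\ldots,\lambda^{(m)})$ where each $\lambda^{(i)}$ has the very specific type $(m-i)^{d_{m-i}}m^{d_{m-i+1}-d_{m-i}}(m-i+1)^{n-d_{m-i+1}}$ for some fixed $\bs{d}$. A generic element of $\T_{4,n}$ has a top row not of this form, so you cannot decompose the \emph{entire} set $\T_{4,n}$ via $\splt$ into rank-$\le 3$ data; that lemma is engineered to isolate exactly the boundary conditions relevant to the Schubert-calculus applications, not to give a size formula for $|\T_{4,n}|$. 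In practice the paper gets $|\T_{4,3}|$ by enumerating the edge labelings $\mc{D}_3$ of $\square_3$ (via the bijection $\mathscr{D}'$ of \Cref{thm:counting-m4}) with computer assistance; brute-forcing $191232$ interlacing arrays directly at $n=3$ is feasible but not ``mechanical listing'' in the hand-computation sense you gesture at. So to complete your disproof you would need to (a) push the enumeration at least to $n=3$, and (b) replace the splitting-lemma cross-check with either direct enumeration of $\T_{4,n}$ or the $\mc{D}_n$ bijection.
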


In \Cref{thm:intro-rank-4-bijections} we give a bijection between interlacing triangular arrays of rank $4$ and certain edge labelings of the square grid graph $\square_n$, which are analogous to the $1/2/3$-puzzles of \Cref{thm:intro-rank-3-bijections}(2).

\begin{theorem}
\label{thm:intro-rank-4-bijections}
Let $n \geq 1$ and fix $\lambda^{(1)},\lambda^{(2)},\lambda^{(3)},\lambda^{(4)} \in [4]^n$. Then the following sets of objects are in bijection:
\begin{itemize}
    \item[(1)] Interlacing triangular arrays $\T_{4,n}(\bs{\lambda})$ of rank $4$ with top row $\bs{\lambda}$;
    \item[(2)] Edge labelings $\D_n(\bs{\lambda})$ of the square grid graph $\square_n$ having boundary conditions $\bs{\lambda}$ and satisfying the conditions of \Cref{sec:squares}.
\end{itemize}
\end{theorem}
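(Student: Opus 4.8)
The plan is to bootstrap off the rank-$3$ bijections of \Cref{thm:intro-rank-3-bijections} using the splitting lemma: I would write a rank-$4$ array as two rank-$3$ arrays glued along shared boundary data, and then recognize a glued pair of $1/2/3$-puzzles as an edge labeling of $\square_n$. Concretely, the first step is to apply the splitting lemma to identify
\[
\T_{4,n}(\lambda^{(1)},\lambda^{(2)},\lambda^{(3)},\lambda^{(4)}) \;\cong\; \bigsqcup_{\mu}\ \T_{3,n}(\alpha^{(1)},\alpha^{(2)},\mu)\times \T_{3,n}(\overline{\mu},\alpha^{(3)},\alpha^{(4)}),
\]
where the union is over intermediate tuples $\mu$ in the appropriate range, the $\alpha^{(i)}$ are the relabeled pieces of the boundary data $\lambda^{(i)}$ cut out by the two halves, and $\overline{\mu}$ is $\mu$ read back along the shared side. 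Beyond the splitting lemma itself, the only work here is the bookkeeping of how the four boundary tuples $\bs{\lambda}$ distribute among, and are relabeled within, the two rank-$3$ factors.

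Next I would run \Cref{thm:intro-rank-3-bijections}, (1)$\leftrightarrow$(2), on each factor, converting the pair of rank-$3$ arrays into a pair of $1/2/3$-puzzles on triangular regions $\Delta_n$ agreeing along the side carrying the labels $\mu$. Gluing them along that side produces a single puzzle $P$ on the triangulated square obtained from $\square_n$ by drawing one diagonal in each face (this triangulated square is exactly two copies of $\Delta_n$ glued along a side, one of them appearing in rotated position, so there the $1/2/3$-puzzle pieces are used in their rotated incarnations); a face/edge/vertex count confirms this identification. The four outer sides of $P$ carry $\lambda^{(1)},\dots,\lambda^{(4)}$, and discarding the labels on the added diagonals leaves an edge labeling of $\square_n$ with boundary $\bs{\lambda}$.

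It then remains to check that this map is a bijection onto $\D_n(\bs{\lambda})$. For injectivity I would show that, face by face, the label on the discarded diagonal together with the two puzzle pieces occupying that face are uniquely recovered from the four edge labels around the face; for surjectivity and for the characterization of the image I would enumerate the pairs of $1/2/3$-puzzle pieces that can share a diagonal edge and verify that this finite list is exactly the list of allowed square configurations specified in \Cref{sec:squares}. This face-local matching of puzzle-piece pairs against the rules of \Cref{sec:squares} is the step I expect to require the most care and is the main obstacle; everything else is formal once the splitting lemma and \Cref{thm:intro-rank-3-bijections} are in hand.

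Finally, the disjoint union over $\mu$ above reassembles into a single set without overcounting, since $\mu$ is recovered from $P$ as the string of labels along the main diagonal and hence is not independent data. I would also remark that, in contrast with the rank-$3$ situation, these square edge labelings do not further collapse to proper colorings of a grid graph, which is consistent with our disproof of \Cref{conj:4-color-conjecture-false}.
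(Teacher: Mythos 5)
Your approach is genuinely different from the paper's, which proves the statement directly: the map $\mathscr{D}'$ reads labels around the boundary of each $\square_k$ to produce the rows of the array (the analogue of $\mathscr{T}$), the interlacing condition is verified by augmenting $D$ with a ``strand diagram'' joining same-labeled edges across off-diagonal faces, and the inverse $\mathscr{D}$ is built by inductively labeling each hook $\square_k\setminus\square_{k-1}$. No reduction to rank $3$ is used.

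There is a concrete gap in your route: \Cref{lem:splitting-interlacing} only applies when each $\lambda^{(i)}$ is a Schubert string of the very specific type $(m-i)^{d_{m-i}}m^{d_{m-i+1}-d_{m-i}}(m-i+1)^{n-d_{m-i+1}}$. In particular, it requires $\lambda^{(1)}$ and $\lambda^{(2)}$ to take values only in $\{m-2,m-1,m\}$ — otherwise the auxiliary triangle $R^{(3)}$ in \Cref{def:split-map} is not even well-defined, since the complementary multiset $\{(m-2)^k,(m-1)^k,m^k\}\setminus(T^{(1)}_{\bullet,k}\cup T^{(2)}_{\bullet,k}\cup R^{(3)}_{\bullet,k-1})$ need not be a singleton. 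But \Cref{thm:intro-rank-4-bijections} is a statement for \emph{arbitrary} $\lambda^{(1)},\ldots,\lambda^{(4)}\in[4]^n$; for example, the paper's own \Cref{fig:phi-example-m4} has $\lambda^{(1)}=2143$, which uses all four values, and hence falls completely outside the hypotheses of the splitting lemma. So the very first step of your outline, writing $\T_{4,n}(\bs{\lambda})$ as a disjoint union of products of rank-$3$ arrays, cannot be carried out in general.

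This mismatch shows up correspondingly on the $\D_n$ side: in your glued triangulated-square picture, the NW half of a main-diagonal face would carry labels from the alphabet of $R$ (only three of the four values) while its SE half carries labels from the alphabet of $S$ (a different three), forcing constraints such as ``$3$ must appear among the N and W edges of each diagonal face and $1$ among the E and S edges.'' Generic elements of $\D_n$ (e.g.\ $N=1$, $W=2$, $E=3$, $S=4$ on a diagonal face) violate this, so even where the split applied you would only see a proper subset of $\D_n(\bs{\lambda})$. The glue also does not literally match on the shared diagonal: $S^{(1)}=(R^{(3)})^{\dagger}$ involves a $2\leftrightarrow 4$ swap in addition to a reversal, so the two puzzles assign different labels to the diagonal edges. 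To make a variant of this argument work you would need a genuinely more general decomposition lemma that does not impose the Schubert-string hypotheses on $\bs{\lambda}$ and that tracks the $\dagger$-twist on the interface; as written, the proposal proves the bijection only for the restricted boundary conditions where \Cref{lem:splitting-interlacing} applies.
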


Unlike in the rank-$3$ case, there is no straightforward way to biject the objects in \Cref{thm:intro-rank-4-bijections} with proper \textit{vertex} colorings. In particular, by enumerating $\D_4$ we show that \Cref{conj:4-color-conjecture-false} is false\footnote{Leonid Petrov has independently observed the failure of \Cref{conj:4-color-conjecture-false} (personal communication).}, as
\[
|\T_{4,4}| = 191232 \neq 187008 = \frac{1}{5} \left|\{\text{proper vertex $5$-colorings of } \boxtimes_4 \} \right|.
\]

\subsection{Geometric interpretations of interlacing triangular arrays}
\label{sec:intro-geometry}

The $0/1/10$-puzzles appearing in \Cref{thm:intro-rank-3-bijections}(3) are known to have various geometric interpretations when certain puzzle pieces are forbidden and when the boundary conditions are taken appropriately. The number of such puzzles with boundary conditions $\bs{\xi}=(\xi^{(1)},\xi^{(2)},\xi^{(3)})$ computes the coefficient of the basis element indexed by $\xi^{(3)}$ in the product of basis elements indexed by $\xi^{(1)}$ and $\xi^{(2)}$ in cohomology $H^*(\gr(d,n))$ and $K$-theory $K(\gr(d,n))$ of Grassmannians, in the (appropriately localized) cohomology $H^{*\text{loc}}_{\C^{\times}}(T^*\gr(d,n))$ of their cotangent bundles, and for the multiplication in the cohomology of the 2-step flag variety \cite{Knutson-Tao-Woodward, Knutson-Zinn-Justin-2, Vakil,wheeler-zinn-justin}. 

We use the specific bijections underlying \Cref{thm:intro-rank-3-bijections} to show that interlacing triangular arrays with forbidden patterns and specified top row likewise compute these coefficients. One advantage of interlacing triangular arrays is that they allow for an interpretation of coefficients in the expansion of an $(m-1)$-fold product, without the need to iteratively apply a rule for products of two elements.


For $\xi$ a $0,1$-string with content $0^d1^{n-d}$, let $G_{\xi}$ denote the class of the structure sheaf of the Schubert variety $X_{\xi} \subset \gr(d,n)$ inside $K(\gr(d,n))$. These classes can be represented by the \textit{(Grassmannian) Grothendieck polynomials}. The $\{G_{\xi}\}$ form a basis for $K(\gr(d,n))$. In \Cref{thm:k-theory} we show that the structure constants for multiplication in the basis $\{G_{\xi}\}$ are equal (up to signs) to the number of certain interlacing triangular arrays. Even the positivity of these structure constants (up to predictable signs, based on degree) is not obvious, and is due originally to Buch \cite{Buch}.

Given a tuple $\bs{\xi}=(\xi^{(1)},\ldots,\xi^{(m)})$ of $0,1$-strings, define $|\bs{\xi}|=\sum_i |\xi^{(i)}|$, where $|\xi^{(i)}|$ is the number of inversions of $\xi^{(i)}$. For $\xi$ of content $0^d1^{n-d}$, we denote by $\xi^{\perp}$ the reversed string.

\begin{theorem}
\label{thm:k-theory}
Let $\xi^{(1)},\ldots,\xi^{(m)}$ have content $0^d1^{n-d}$. Let coefficients $g_{\bs{\xi}}=g_{\xi^{(1)},\ldots,\xi^{(m)}}$ be determined by 
\[
\prod_{i=1}^{m-1} G_{\xi^{(i)}} = \sum_{\xi^{(m)}} g_{\bs{\xi}} G_{(\xi^{(m)})^{\perp}}.
\]
Then $(-1)^{d(n-d)-|\bs{\xi}|}g_{\bs{\xi}}$ is the number of interlacing triangular arrays $T$ from $\T_{m,n}(\toprow(\bs{\xi}))$ such that, for $i=2,\ldots,m-1$, $T^{(i)}$ avoids
\begin{equation}
\label{eq:k-theory-thm-forbidden}
\raisebox{-0.5\height}{
\begin{tikzpicture}
    \draw[green!80,thin] (0,0)--(-0.5,.866);
    \node at (-0.5,.866) {$m-i$};
    \node at (0,0) {$m$};
\end{tikzpicture}} \qquad and \qquad
\raisebox{-0.5\height}{
\begin{tikzpicture}
    \draw[green!80,thin] (0,0)--(-0.5,-.866)--(0.5,-.866)--(0,0);
    \node at (-0.75,-.866) {$m-i+1$};
    \node at (0.6,-.866) {$m$};
    \node at (0,0) {$m$};
\end{tikzpicture}.}
\end{equation}
\end{theorem}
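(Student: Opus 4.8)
The plan is to induct on the rank $m$, using the splitting lemma to peel off one two-class multiplication at a time and reducing each such multiplication, via \Cref{thm:intro-rank-3-bijections}, to a count of $0/1/10$-puzzles of the type with the geometric meaning recorded in \Cref{sec:intro-geometry}. The base case $m=2$ is degenerate: there are no forbidden patterns, and one checks directly from the definition that $|\T_{2,n}(\toprow(\xi^{(1)},\xi^{(2)}))|$ equals $1$ when $\xi^{(2)}=(\xi^{(1)})^{\perp}$ and $0$ otherwise, which matches $\prod_{i=1}^{1}G_{\xi^{(i)}}=G_{\xi^{(1)}}=\sum_{\xi^{(2)}}g_{\bs{\xi}}G_{(\xi^{(2)})^{\perp}}$, the sign being $+1$ since $|\xi^{(1)}|+|\xi^{(2)}|=d(n-d)$ in the surviving term. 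For $m=3$ the statement forbids, in $T^{(2)}$, exactly the two patterns of \eqref{eq:k-theory-thm-forbidden} with $i=2$; running these through the chain of bijections of \Cref{thm:intro-rank-3-bijections} carries them to a pair of forbidden $0/1/10$-puzzle pieces, and I would check that the pieces which survive are precisely those of the Knutson--Tao--Woodward/Vakil $K$-theory puzzle rule. The rule of \cite{Knutson-Tao-Woodward,Vakil,wheeler-zinn-justin} then identifies the resulting puzzle count with the signed coefficient of $G_{\lambda}$ in $G_{\mu}G_{\nu}$, and unwinding the $\partn/\toprow$ dictionary together with the identification $\mu\leftrightarrow\xi^{(1)}$, $\nu\leftrightarrow\xi^{(2)}$, $\lambda\leftrightarrow(\xi^{(3)})^{\perp}$ gives the case $m=3$.

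For the inductive step, assume the theorem for rank $m-1$ and write
\[
\prod_{i=1}^{m-1}G_{\xi^{(i)}}=\Bigl(\prod_{i=1}^{m-2}G_{\xi^{(i)}}\Bigr)\,G_{\xi^{(m-1)}}.
\]
Expanding the parenthesized product by the rank-$(m-1)$ case and then multiplying by $G_{\xi^{(m-1)}}$ by the rank-$3$ case, associativity of the product in $K(\gr(d,n))$ yields
\[
g_{\xi^{(1)},\ldots,\xi^{(m)}}=\sum_{\eta}g_{\xi^{(1)},\ldots,\xi^{(m-2)},\eta}\cdot h_{\eta,\,\xi^{(m-1)},\,\xi^{(m)}},
\]
where $h$ is the signed coefficient of $G_{(\xi^{(m)})^{\perp}}$ in $G_{\eta^{\perp}}G_{\xi^{(m-1)}}$ and $\eta$ runs over strings of content $0^{d}1^{n-d}$. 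On the combinatorial side, the splitting lemma expresses $\T_{m,n}(\toprow(\bs{\xi}))$ as a disjoint union, over the possible strings along the shared internal edge, of products $\T_{m-1,n}(\cdots)\times\T_{3,n}(\cdots)$. The crucial point is that this decomposition is compatible with the forbidden patterns: imposing \eqref{eq:k-theory-thm-forbidden} for $i=2,\ldots,m-1$ on the rank-$m$ array amounts to imposing \eqref{eq:k-theory-thm-forbidden} for $i=2,\ldots,m-2$ on the rank-$(m-1)$ factor --- the reindexing $m\mapsto m-1$ accounting exactly for the shift of the distinguished value under splitting --- together with the $i=2$ version on the interior component of the rank-$3$ factor. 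Matching the two displayed identities term by term, using the inductive hypothesis for the $\T_{m-1,n}$ count and the $m=3$ case for the $\T_{3,n}$ count, closes the induction. (Equivalently, one can iterate the splitting lemma first, decomposing a rank-$m$ array into a ``fan'' of $m-2$ rank-$3$ arrays glued along $m-3$ internal edges, convert each to a $K$-theory $0/1/10$-puzzle, and invoke associativity directly; this is the sense in which the $(m-1)$-fold product is computed without iterating a two-class rule.)

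Signs are tracked uniformly through Buch's theorem \cite{Buch}: iterating $G_{\mu}G_{\nu}=\sum_{\lambda}(-1)^{|\lambda|-|\mu|-|\nu|}c^{\lambda}_{\mu\nu}G_{\lambda}$ with $c^{\lambda}_{\mu\nu}\geq 0$ gives $\prod_{i=1}^{m-1}G_{\xi^{(i)}}=\sum_{\lambda}(-1)^{|\lambda|-\sum_{i=1}^{m-1}|\xi^{(i)}|}\,(\text{nonnegative})\,G_{\lambda}$, and substituting $\lambda=(\xi^{(m)})^{\perp}$, for which $|\lambda|=d(n-d)-|\xi^{(m)}|$, converts the exponent to $d(n-d)-|\bs{\xi}|$. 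Hence $(-1)^{d(n-d)-|\bs{\xi}|}g_{\bs{\xi}}$ is the nonnegative integer computed by the construction, as claimed.

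The main obstacle is the interface between the splitting lemma and the forbidden patterns, together with the precise bijective matching, under \Cref{thm:intro-rank-3-bijections}, of the two array patterns in \eqref{eq:k-theory-thm-forbidden} with the two $0/1/10$-puzzle pieces that must be excluded to obtain the $K$-theory rule rather than the cohomology, cotangent-bundle, or two-step-flag rules, which exclude different subsets of pieces. In particular, getting the dependence on $i$ right --- tracking how the distinguished labels $m-i$ and $m-i+1$ transform as rank-$3$ layers are peeled off --- and reconciling the $\perp$ and rotational conventions of puzzle boundary data with the $\toprow$ labeling of \Cref{sec:puzzle-conversion} will require care, though it should be routine once that dictionary is set up.
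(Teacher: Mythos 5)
Your overall strategy matches the paper's: induct on $m$ via the splitting lemma, with the base case $m=3$ established by pushing the forbidden patterns through the $\mathscr{T}$ bijection to $0/1/10$-puzzles and invoking Vakil's $K$-theory puzzle rule, and with signs tracked by Buch's positivity. The $m=2$ base case is as you describe.

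There is, however, a structural mismatch in your inductive step. You decompose $\prod_{i=1}^{m-1}G_{\xi^{(i)}}=\bigl(\prod_{i=1}^{m-2}G_{\xi^{(i)}}\bigr)\,G_{\xi^{(m-1)}}$, peeling the last factor off the right, and then appeal to \Cref{lem:splitting-interlacing} to match this combinatorially. But the splitting lemma as stated peels the rank-$3$ factor off from the \emph{left}: $\splt(T)=(R,S)$ with $R=(T^{(1)},T^{(2)},R^{(3)})$ of rank $3$ and $S=((R^{(3)})^{\dagger},T^{(3)},\ldots,T^{(m)})$ of rank $m-1$. The paper accordingly writes $\prod_{i=1}^{m-1}G_{\xi^{(i)}}=(G_{\xi^{(1)}}G_{\xi^{(2)}})\prod_{i=3}^{m-1}G_{\xi^{(i)}}$, peeling the first two. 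Your right-to-left decomposition would need a mirrored splitting lemma peeling off $T^{(m-1)},T^{(m)}$, which \Cref{lem:splitting-interlacing} does not give; it is plausible by left-right symmetry of the interlacing condition, but since the forbidden patterns \eqref{eq:k-theory-thm-forbidden} are not symmetric under that reflection, you would have to check compatibility explicitly. The cleanest repair is to switch to the paper's left-peeling decomposition, which matches the stated lemma directly. (Your parenthetical remark about a ``fan'' of $m-2$ rank-$3$ arrays obtained by iterating the split is in fact iterated left-peeling and is close to what the paper does; but the primary induction you wrote down does not match it.)

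A minor point on the $m=3$ case: the relevant statement is specifically Vakil's Theorem 3.6, which forbids the $\nabla$-oriented $10$-$10$-$10$ piece and plugs into \Cref{prop:avoiding-nabla-10-10-10}; forbidding instead the $\Delta$-oriented piece yields the dual basis of \Cref{thm:dual-k}. The orientation of the excluded $K$-piece is what separates $g$ from $g^{\ast}$, so it is worth identifying precisely rather than citing the references collectively.
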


Dual to $\{G_{\xi}\}$ is the basis $\{G^{\ast}_{\xi}\}$ of ideal sheaves: functions on Schubert varieties vanishing on smaller Schubert varieties. The $G^{\ast}_{\xi}$ can be represented by \textit{dual Grothendieck polynomials}.

\begin{theorem}
\label{thm:dual-k}
Let $\xi^{(1)},\ldots,\xi^{(m)}$ have content $0^d1^{n-d}$. Let coefficients $g^{\ast}_{\bs{\xi}}=g^{\ast}_{\xi^{(1)},\ldots,\xi^{(m)}}$ be determined by 
\[
\prod_{i=1}^{m-1} G^{\ast}_{\xi^{(i)}} = \sum_{\xi^{(m)}} g^{\ast}_{\bs{\xi}} G^{\ast}_{(\xi^{(m)})^{\perp}}.
\]
Then $(-1)^{d(n-d)-|\bs{\xi}|}g^{\ast}_{\bs{\xi}}$ is the number of interlacing triangular arrays $T$ from $\T_{m,n}(\toprow(\bs{\xi}))$ such that, for $i=2,\ldots,m-1$, $T^{(i)}$ avoids 
\begin{equation}
\label{eq:dual-k-thm-forbidden}
\raisebox{-0.5\height}{
\begin{tikzpicture}
    \draw[green!80,thin] (0,0)--(-0.5,.866);
    \node at (-0.5,.866) {$m$};
    \node at (0,0) {$m-i$};
\end{tikzpicture}} \qquad and \qquad
\raisebox{-0.5\height}{
\begin{tikzpicture}
    \draw[green!80,thin] (0,0)--(-0.5,.866)--(0.5,.866)--(0,0);
    \node at (-0.6,.866) {$m$};
    \node at (0.75,.866) {$m-i+1$};
    \node at (0,0) {$m$};
\end{tikzpicture}.}
\end{equation}
\end{theorem}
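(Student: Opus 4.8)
The plan is to deduce Theorem~\ref{thm:dual-k} from Theorem~\ref{thm:k-theory}, exploiting the duality between the bases $\{G_\xi\}$ and $\{G^{\ast}_\xi\}$. Concretely, I expect an identity of the shape
\[
g^{\ast}_{\xi^{(1)},\ldots,\xi^{(m)}} = \pm\, g_{\,(\xi^{(1)})^{\perp},\ldots,(\xi^{(m)})^{\perp}},
\]
where the sign depends only on $d$, $n$, and $m$. Such a relation is essentially forced by the duality pairing $\chi(\gr(d,n), G_\lambda \cdot G^{\ast}_\mu) = \delta_{\lambda\mu}$ together with the Grassmannian self-duality $\gr(d,n) \cong \gr(n-d,n)$ and the passage to opposite Schubert varieties (equivalently, it follows from the known relationship between the structure constants of stable Grothendieck polynomials and those of their duals); I would pin down its precise form --- in particular the exact sign --- by a careful comparison of the dualizing data of Schubert varieties, or by transporting the ordinary-Grothendieck puzzle rule through the reflection symmetry of $0/1/10$-puzzles. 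Granting this identity, the sign in Theorem~\ref{thm:dual-k} is handled by the elementary fact $|\xi^{\perp}| = d(n-d) - |\xi|$, which turns $(-1)^{d(n-d)-|\bs{\xi}|}$ into $(-1)^{d(n-d)-|\bs{\eta}|}$ with $\bs{\eta} = ((\xi^{(1)})^{\perp},\ldots,(\xi^{(m)})^{\perp})$, up to an even shift absorbed into the global sign; this also reproves Buch-type positivity for the $g^{\ast}_{\bs{\xi}}$.

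It then remains to transport the combinatorial statement. Theorem~\ref{thm:k-theory} expresses $(-1)^{d(n-d)-|\bs{\eta}|} g_{\bs{\eta}}$ as the number of interlacing arrays in $\T_{m,n}(\toprow(\bs{\eta}))$ whose layers $2,\ldots,m-1$ avoid \eqref{eq:k-theory-thm-forbidden}; I must rewrite this as the number of arrays in $\T_{m,n}(\toprow(\bs{\xi}))$ whose layers avoid \eqref{eq:dual-k-thm-forbidden}. The guiding observation is that the forbidden pair in \eqref{eq:dual-k-thm-forbidden} is the image of the forbidden pair in \eqref{eq:k-theory-thm-forbidden} under the point-reflection of local configurations in the triangular lattice: ``a vertex labeled $m$ with down-right neighbor labeled $m-i$'' is the $180^\circ$ rotation of ``a vertex labeled $m$ with up-left neighbor labeled $m-i$'', and the down-pointing triangle with labels $(m,m,m-i+1)$ is the rotation of the up-pointing triangle with labels $(m,m-i+1,m)$. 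Since no isometry of the triangular region $\Delta_n$ realizes this rotation, I would instead pass through the $0/1/10$-puzzle model: compose the bijections of \Cref{thm:intro-rank-3-bijections} (extended to rank $m$ via the splitting lemma) with the reflection symmetry of puzzles, which reverses each boundary string (matching $\bs{\xi} \leftrightarrow \bs{\eta}$) and mirrors the ``extra'' $K$-theory piece of \cite{Vakil, wheeler-zinn-justin}; translating back, avoidance of \eqref{eq:k-theory-thm-forbidden} on the puzzle side becomes avoidance of \eqref{eq:dual-k-thm-forbidden}. Composing all the bijections yields Theorem~\ref{thm:dual-k}.

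The main obstacle is getting all of this bookkeeping to close up consistently. Algebraically, the delicate point is the exact sign (and the vanishing of any line-bundle twist) in the duality identity above, which requires the precise description of the dualizing complexes of Schubert varieties and their opposite cells. Combinatorially, since the pattern-rotation is genuinely not geometric, one must verify that the reflection of $0/1/10$-puzzles carries the allowed piece set of the $K$-theory rule precisely to the one that --- under the bijections of \Cref{thm:intro-rank-3-bijections} --- corresponds to forbidding \eqref{eq:dual-k-thm-forbidden} on each of layers $2,\ldots,m-1$, and not some nearby variant; tracking the $\perp$'s on every string component, together with the layer ordering, through the whole chain of bijections is where the actual work lies.
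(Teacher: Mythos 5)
The paper's own proof is much more direct than yours: for $m=3$ it cites Wheeler--Zinn-Justin's Theorem~1$'$/Remark~2, which \emph{directly} gives $(-1)^{d(n-d)-|\bs{\xi}|}g^{\ast}_{\bs{\xi}}$ as a count of $0/1/10$-puzzles in which the $\Delta$-oriented $10$-$10$-$10$ piece is forbidden; it then applies \Cref{prop:avoiding-delta-10-10-10} to convert to interlacing arrays, and the splitting lemma for $m\geq 4$, in exact parallel with the proof of \Cref{thm:k-theory}. No algebraic duality between $g$ and $g^{\ast}$, and no symmetry of puzzles, is invoked. Your strategy --- derive the $G^{\ast}$-rule from the $G$-rule via a duality identity $g^{\ast}_{\bs{\xi}}=\pm g_{\bs{\xi}^{\perp}}$ and a puzzle symmetry --- is therefore a genuinely different route, and it has a serious obstruction at its combinatorial heart.

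The problem is the ``reflection symmetry of puzzles'' you want to use. Reflection of $\Delta_n$ across its vertical axis is an isometry, but it sends $\Delta$-oriented unit triangles to $\Delta$-oriented ones and $\nabla$-oriented ones to $\nabla$-oriented ones; orientations are only exchanged by rotations through odd multiples of $60^\circ$, and none of those preserve the region $\Delta_n$. So there is \emph{no} isometry of the puzzle board that carries the forbidden $\nabla$-oriented $10$-$10$-$10$ piece (Vakil's $G$-rule) to the forbidden $\Delta$-oriented one (the $G^{\ast}$-rule). Worse, since all three edges of the $K$-pieces carry the same label $10$, each of these two pieces is individually \emph{fixed} by any reflection, so nothing ``mirrors'' one into the other. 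Moreover the $0/1/10$-piece set is not even closed under a bare reflection (the mixed pieces $0/1/10$ are closed under cyclic rotation but not under reversal of the cyclic order). Your correct local observation --- that each pattern in \eqref{eq:dual-k-thm-forbidden} is the $180^\circ$ point-rotation of the corresponding pattern in \eqref{eq:k-theory-thm-forbidden} --- does not come from a global operation on arrays or puzzles, and the plan to realize it by reflecting puzzles cannot close up. Finally, the duality identity itself (the precise sign, and that no line-bundle twist intervenes) is left as a claim to be pinned down; even granting it, the transport step fails, so the proposal as written does not establish the theorem. The paper's route sidesteps all of this by using the known $G^{\ast}$-puzzle rule directly.
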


Interlacing triangular arrays avoiding \emph{both} the patterns (\ref{eq:k-theory-thm-forbidden}) and (\ref{eq:dual-k-thm-forbidden}) compute structure constants in the ordinary cohomology of the Grassmannian. In fact, in this case we can generalize to products of certain classes in the cohomology of arbitrary partial flag varieties.

For $\d = (0=d_0 \leq d_1 \leq \cdots \leq d_m=n)$, let $\Fl(\bs{d};n)$ denote the partial flag variety of flags of subspaces of $\C^n$ with dimension vector $\d$. Let $S_n^{\d}$ denote the set of permutations whose descents are contained in $\d$. Then $H^{\ast}(\Fl(\d;n))$ has a basis $\{\sigma_{w}\}_{w \in S_n^{\d}}$ consisting of the classes of the Schubert varieties in $\Fl(\d;n)$ (see \Cref{sec:prelim-flags}). In particular, the class $\sigma_{w_0^{\d}}$ of the longest element of $S_n^{\d}$ is the class of a point. For $w^{(1)},\ldots,w^{(m)} \in S_n^{\d}$, let coefficients $c_{\bs{w}}=c_{w^{(1)},\ldots,w^{(m)}}$ be determined by
\[
\prod_{i=1}^{m-1} \sigma_{w^{(i)}} = \sum_{w^{(m)}} c_{\bs{w}} \sigma_{(w^{(m)})^{\vee_{\d}}},
\]
where $w^{\vee_{\d}} \coloneqq w_0 w w_0(\d)$.

\begin{theorem}
\label{thm:2-step}
Let $\d = (0=d_0 \leq d_1 \leq \cdots \leq d_m=n)$. For $i \in [m]$ let $\Sigma_i=\{m-i<m<m-i+1\}$, let $\lambda^{(i)}$ be a string of type $(m-i)^{d_{m-i}}m^{d_{m-i+1}-d_{m-i}}(m-i+1)^{n-d_{m-i+1}}$, and let $w^{(i)}=w(\lambda^{(i)})_{\Sigma_i}\in S_n^{\d}$ be the corresponding permutation (see \Cref{def:Schubert-string-permutation}). Then $c_{\bs{w}}$ is the number of interlacing triangular arrays $T$ from $\T_{m,n}(\bs{\lambda})$ such that, for $i=2,
\ldots,m-1$, $T^{(i)}$ avoids the patterns from (\ref{eq:k-theory-thm-forbidden}) and (\ref{eq:dual-k-thm-forbidden}).
\end{theorem}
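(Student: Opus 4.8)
The plan is to reduce Theorem~\ref{thm:2-step} to Theorem~\ref{thm:intro-rank-3-bijections} together with the known geometric interpretation of (two-step) Schubert puzzles, by first handling the case $m=3$ directly and then bootstrapping to general $m$ using the splitting lemma advertised in the abstract. For $m=3$ the claim is essentially a dictionary statement: the permutations $w^{(i)}=w(\lambda^{(i)})_{\Sigma_i}$ with $\Sigma_i=\{m-i<m<m-i+1\}$ are exactly the classes pulled back from the two-step flag variety $\Fl(d_{m-i},d_{m-i+1};n)$, and the structure constant $c_{\bs w}$ is, by Knutson--Tao--Woodward (as reinterpreted via \cite{Knutson-Zinn-Justin-2,Vakil,wheeler-zinn-justin}), the number of $0/1/10$-puzzles with the equivariant/$K$-theoretic pieces forbidden and boundary $\bs\xi=\partn(\bs\lambda)$. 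Feeding this through the bijection (1)$\leftrightarrow$(3) of Theorem~\ref{thm:intro-rank-3-bijections}, the forbidden puzzle pieces translate into forbidden local patterns on the interlacing array $T^{(2)}$; the content of this step is to check that, under the explicit bijection constructed in Section~\ref{sec:puzzle-conversion}, the pieces excluded in both the cohomology-of-Grassmannian rule \emph{and} the two-step rule correspond precisely to the union of the patterns (\ref{eq:k-theory-thm-forbidden}) and (\ref{eq:dual-k-thm-forbidden}) (for $i=2$). This is a finite check piece-by-piece, but it must be done carefully because it pins down the exact shape of the forbidden triangles.

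Next I would treat general $m$. The product $\prod_{i=1}^{m-1}\sigma_{w^{(i)}}$ is computed by iterating the $m=3$ rule: writing $\prod_{i=1}^{m-1}\sigma_{w^{(i)}}$ as $\big(\prod_{i=1}^{m-2}\sigma_{w^{(i)}}\big)\cdot\sigma_{w^{(m-1)}}$ and expanding, one sees that $c_{\bs w}$ is a sum over intermediate classes of products of structure constants, each of which is (by induction and the $m=3$ case) counted by a lower-rank interlacing array with the appropriate avoidance. The splitting lemma then says that an interlacing array of rank $m$ decomposes canonically into arrays of lower rank glued along a common boundary layer, matching exactly the combinatorics of this iterated product; the boundary data $\lambda^{(i)}$ with its three-valued content $(m-i)^{d_{m-i}}m^{d_{m-i+1}-d_{m-i}}(m-i+1)^{n-d_{m-i+1}}$ is engineered so that the $i$-th layer $T^{(i)}$ only ever sees the symbols $\{m-i,m,m-i+1\}$, i.e. behaves like the rank-$3$ situation shifted. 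Thus the avoidance conditions imposed layer-by-layer in the statement are precisely the ones produced by the recursion, and the count multiplies correctly.

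The main obstacle, and the step I would spend the most care on, is verifying that the splitting lemma's decomposition is compatible with the \emph{simultaneous} avoidance of the patterns (\ref{eq:k-theory-thm-forbidden}) and (\ref{eq:dual-k-thm-forbidden}): I must confirm that an array $T\in\T_{m,n}(\bs\lambda)$ with every layer $T^{(i)}$, $2\le i\le m-1$, avoiding both patterns splits into pieces each of which avoids the corresponding rank-$3$ patterns, \emph{and conversely} that any compatible collection of avoiding lower-rank pieces reassembles into such a $T$ — i.e.\ the avoidance is a ``local'' condition that neither creates nor destroys forbidden patterns across the gluing seam. A secondary subtlety is bookkeeping the duality convention $w^{\vee_{\d}}=w_0 w w_0(\d)$ and the reversal $(\xi^{(m)})^\perp$: I need to check that the output boundary of the array, read under $\toprow$ and the puzzle conversions, lands on $(w^{(m)})^{\vee_\d}$ rather than $w^{(m)}$, which amounts to tracking how the ``southeast'' side of the triangle is read off relative to the other two sides. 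Once these compatibility checks are in place, the theorem follows by assembling the $m=3$ base case, the iterated-product expansion of the cohomology ring, and the splitting lemma.
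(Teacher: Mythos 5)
Your overall plan — base case $m=3$ via the puzzle dictionary plus Theorem~\ref{thm:intro-rank-3-bijections}, then induction via the splitting lemma — is the paper's strategy, but the proposal misidentifies where the real work lies, and one step as described would fail.

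First, a minor misattribution that matters for the $m=3$ base case: the relevant geometric input is not Knutson--Tao--Woodward reinterpreted via \cite{Knutson-Zinn-Justin-2,Vakil,wheeler-zinn-justin} (those cover $K$-theory, dual $K$-theory, and SSM classes, used for Theorems~\ref{thm:k-theory}, \ref{thm:dual-k}, \ref{thm:ssm}), but rather Halacheva--Knutson--Zinn-Justin \cite[Thm.~2]{halacheva-knutson-zinn-justin}, which gives the puzzle rule for multiplying classes pulled back to a two-step flag variety. Moreover, that result reads the boundary conditions in a cyclically rotated order relative to the convention here, so one needs to invoke the $120^\circ$-rotation symmetry of the allowed puzzle pieces to convert; your proposal doesn't flag this.

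Second and more seriously: for $m\geq 4$ you worry mainly that the splitting might create or destroy forbidden patterns across the seam. That compatibility is essentially immediate (the split and merge maps act locally on the layers $T^{(i)}$ and never alter a $T^{(i)}$ with $2\le i\le m-1$). The genuine subtlety you do not address is on the \emph{geometric} side of the recursion. When one expands $\prod_i\sigma_{w^{(i)}}=\sum_{u,v} c^u_{w^{(1)},w^{(2)}}\,c^v_{w^{(3)},\ldots,w^{(m)}}\,\sigma_u\sigma_v$ and uses duality to kill all terms with $v\neq u^{\vee_{\d}}$, one must show that the surviving intermediate classes are necessarily pulled back from $\gr(d_{m-2},n)$ — concretely, that $u^{\vee_{\d'}}$ and $v^{\vee_{\d''}}$ lie in $S_n^{\{d_{m-2}\}}$. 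Only then do the intermediate strings $\mu$ in the splitting lemma (which are forced to have type $(m-2)^{n-d_{m-2}}m^{d_{m-2}}$, i.e.\ two-valued) account for all surviving geometric terms, making the counts match. Without this argument, the induction asserts an equality between a sum over all admissible $\mu$ and a sum over a potentially larger set of Schubert classes, and the proof does not close. Relatedly, your grouping $\bigl(\prod_{i=1}^{m-2}\sigma_{w^{(i)}}\bigr)\cdot\sigma_{w^{(m-1)}}$ does not line up with the splitting lemma, which peels off $(T^{(1)},T^{(2)})$ together with the intermediate layer; the paper instead groups $(\sigma_{w^{(1)}}\sigma_{w^{(2)}})\cdot\prod_{i\geq 3}\sigma_{w^{(i)}}$ so that the geometric decomposition mirrors $\splt$.
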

The Schubert classes $\sigma_{w^{(i)}}$ appearing in \Cref{thm:2-step} are the pullbacks of Schubert classes under the projection from $\Fl(\d;n)$ to certain $2$-step flag varieties. It is not clear how one might hope to extend \Cref{thm:2-step} to provide rules for products of more general Schubert classes; combinatorial rules for products of arbitrary Schubert classes are known only for flag varieties with $m \leq 4$ steps (see, e.g. \cite{Buch-Kresch-Purbhoo-Tamvakis,Knutson-Zinn-Justin-1,Knutson-Zinn-Justin-2}).

Finally, let $H^{*\text{loc}}_{\C^{\times}}(T^*\gr(d,n))$ denote the equivariant cohomology of the cotangent bundle of $\gr(d,n)$ with respect to the $\C^{\times}$-action scaling the cotangent spaces, localized as in \cite[\S2.2]{Knutson-Zinn-Justin-2}. For $\xi$ of content $0^d1^{n-d}$, let $S_{\xi} \in H^{*\text{loc}}_{\C^{\times}}(T^*\gr(d,n))$ denote the \emph{Segre--Schwartz--MacPherson (SSM) class} of the corresponding Schubert variety, using the conventions of \cite[\S2.4 \& \S5.2]{Knutson-Zinn-Justin-2} (see also \cite{Feher-Rimanyi, Macpherson}). In our last main theorem, we show that interlacing triangular arrays, with \emph{no} forbidden patterns, compute structure constants for the $\{S_{\xi}\}$.

\begin{theorem}
\label{thm:ssm}
Let $\xi^{(1)},\ldots,\xi^{(m)}$ have content $0^d1^{n-d}$. Let coefficients $s_{\bs{\xi}}=s_{\xi^{(1)},\ldots,\xi^{(m)}}$ be determined by 
\[
\prod_{i=1}^{m-1} S_{\xi^{(i)}} = \sum_{\xi^{(m)}} s_{\bs{\xi}} S_{(\xi^{(m)})^{\perp}}.
\]
Then $(-1)^{d(n-d)-|\bs{\xi}|}s_{\bs{\xi}}$ is the cardinality of $\T_{m,n}(\toprow(\bs{\xi}))$.
\end{theorem}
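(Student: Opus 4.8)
The plan is to reduce \Cref{thm:ssm} to two ingredients: the known ``two-factor'' puzzle rule for multiplying SSM classes in $H^{*\mathrm{loc}}_{\C^{\times}}(T^*\gr(d,n))$, and the rank-$3$ bijection of \Cref{thm:intro-rank-3-bijections}; the general $m$ then follows by induction using associativity of the product together with the splitting lemma. I would argue by induction on $m$, with the base case $m=2$ being essentially trivial: $S_{\xi^{(1)}}=\sum_{\xi^{(2)}} s_{\xi^{(1)},\xi^{(2)}}S_{(\xi^{(2)})^{\perp}}$ forces $s_{\xi^{(1)},\xi^{(2)}}=[\xi^{(2)}=(\xi^{(1)})^{\perp}]$, and $|\T_{2,n}(\toprow(\xi^{(1)},\xi^{(2)}))|$ is $1$ or $0$ accordingly, with the sign being trivial since $|\xi^{(1)}|+|(\xi^{(1)})^{\perp}|=d(n-d)$.

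For the key geometric input I would invoke the fact that multiplication of two SSM classes of Schubert varieties in $H^{*\mathrm{loc}}_{\C^{\times}}(T^*\gr(d,n))$ is computed, up to the stated sign, by counting $0/1/10$-puzzles with \emph{all} puzzle pieces allowed \cite{Knutson-Zinn-Justin-2, wheeler-zinn-justin} (this is precisely why \Cref{thm:ssm} carries no forbidden patterns, in contrast to the cohomology and $K$-theory cases). Here one must check that the boundary conventions for $0/1/10$-puzzles used in \Cref{thm:intro-rank-3-bijections}(3) match those in the SSM rule, that the output side carries the reversal $(\cdot)^{\perp}$, and that the chosen normalization of the classes $S_{\xi}$ produces exactly the sign $(-1)^{d(n-d)-|\bs{\xi}|}$. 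Granting this, \Cref{thm:intro-rank-3-bijections} transports the count from $0/1/10$-puzzles to $\T_{3,n}(\toprow(\bs{\xi}))$, establishing the $m=3$ case and, more importantly, the two-factor identity that the inductive step will repeatedly use.

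For the inductive step, assume the result for $m-1$ and write, using associativity,
\[
\prod_{i=1}^{m-1} S_{\xi^{(i)}} = \Big(\prod_{i=1}^{m-2} S_{\xi^{(i)}}\Big)\, S_{\xi^{(m-1)}} = \sum_{\eta} s'_{\eta}\, S_{\eta^{\perp}}\, S_{\xi^{(m-1)}},
\]
where by the inductive hypothesis $(-1)^{d(n-d)-\sum_{i\le m-2}|\xi^{(i)}|-|\eta|}\,s'_{\eta}=\big|\T_{m-1,n}(\toprow(\xi^{(1)},\dots,\xi^{(m-2)},\eta))\big|$. Expanding each $S_{\eta^{\perp}}S_{\xi^{(m-1)}}$ by the two-factor identity above and collecting the coefficient of $S_{(\xi^{(m)})^{\perp}}$, the signs telescope (again via $|\eta|+|\eta^{\perp}|=d(n-d)$ and $\sum_{i=1}^{m}|\xi^{(i)}|=|\bs{\xi}|$) to give
\[
(-1)^{d(n-d)-|\bs{\xi}|}\,s_{\bs{\xi}} = \sum_{\eta} \big|\T_{m-1,n}(\toprow(\xi^{(1)},\dots,\xi^{(m-2)},\eta))\big|\cdot\big|\T_{3,n}(\toprow(\eta^{\perp},\xi^{(m-1)},\xi^{(m)}))\big|.
\]
The splitting lemma then identifies the right-hand side with $|\T_{m,n}(\toprow(\bs{\xi}))|$, a rank-$m$ array being recovered uniquely from a rank-$(m-1)$ array on its top triangles glued along a shared boundary row $\eta$ to a rank-$3$ array built from $\eta^{\perp},\xi^{(m-1)},\xi^{(m)}$, which completes the induction.

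The main obstacle is entirely bookkeeping of conventions and signs: one must confirm that the splitting lemma is stated in exactly the shape needed to match iterated two-factor multiplication — in particular how it treats the indexing data under $\toprow$ and the reversal at the glued row — and that the localized SSM multiplication rule of \cite{Knutson-Zinn-Justin-2, wheeler-zinn-justin} really is the all-pieces $0/1/10$-puzzle rule with precisely the sign $(-1)^{d(n-d)-|\bs{\xi}|}$ and the $(\cdot)^{\perp}$ on the output. The remaining ingredients — associativity of the product and induction — are formal once these identifications are pinned down.
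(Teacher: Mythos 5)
Your proposal follows essentially the same route as the paper: cite the Knutson--Zinn-Justin all-pieces $0/1/10$-puzzle rule for SSM classes as the $m=3$ input, convert puzzles to interlacing triangular arrays via \Cref{thm:psi-is-bijection} and \Cref{prop:puzzle-cryptomorphism}, and then induct on $m$ via associativity combined with the splitting lemma, with the sign $(-1)^{d(n-d)-|\bs{\xi}|}$ telescoping because $|\eta|+|\eta^{\perp}|=d(n-d)$. The one small mismatch is directional: you peel off the \emph{last} factor, writing $\prod_{i=1}^{m-1}S_{\xi^{(i)}}=\bigl(\prod_{i=1}^{m-2}S_{\xi^{(i)}}\bigr)S_{\xi^{(m-1)}}$, which leads to a decomposition $\bigsqcup_{\eta}\T_{m-1,n}(\ldots,\eta)\times\T_{3,n}(\eta^{\perp},\xi^{(m-1)},\xi^{(m)})$, whereas \Cref{lem:splitting-interlacing} as stated splits off the \emph{first two} triangles, giving $\bigsqcup_{\mu}\T_{3,n}(\lambda^{(1)},\lambda^{(2)},\mu)\times\T_{m-1,n}(\mu^{\dagger},\lambda^{(3)},\ldots,\lambda^{(m)})$. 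Your version is the mirror image and is true by the evident left--right symmetry of interlacing arrays, but it is not literally the lemma proved in the paper; the cleanest fix is to re-associate as $\bigl(S_{\xi^{(1)}}S_{\xi^{(2)}}\bigr)\prod_{i=3}^{m-1}S_{\xi^{(i)}}$, which matches \Cref{lem:splitting-interlacing} on the nose (and is exactly what the paper does in the parallel proof of \Cref{thm:k-theory}).
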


\subsection{Examples of the geometric interpretations}

\begin{ex}\label{ex:grassmannian-}
Let $m=4$ and $n=4$. Consider the $0,1$-strings $\xi^{(1)}=\xi^{(2)}=\xi^{(3)}=0101$ with length $1$. Correspondingly, $\lambda^{(1)}=\toprow(\xi^{(1)})=3434$, and analogously, $\lambda^{(2)}=2323$ and $\lambda^{(3)}=1212$. They also correspond to the permutation $1324$ and the partition with one box. As an example to \Cref{thm:2-step}, in $H^*(\gr(2,4))$, $\sigma_{1324}^3=2\sigma_{2413}$, whose $0,1$-string is $1010$ and the coefficient $2$ is given by the interlacing triangular arrays in Figure~\ref{fig:Gr24-interlacing-example}.
\begin{figure}[h!]
\centering
\begin{tikzpicture}[scale=0.5]
\def\sep{2};
\draw[green,thin](0.0,0.0)--(3.0,0.0);
\draw[green,thin](3,0)--(1.5,-2.598);
\draw[green,thin](0,0)--(1.5,-2.598);
\draw[green,thin](0.5,-0.866)--(2.5,-0.866);
\draw[green,thin](2,0)--(1.0,-1.7319999999999998);
\draw[green,thin](1,0)--(2.0,-1.7319999999999998);
\draw[green,thin](1.0,-1.732)--(2.0,-1.732);
\draw[green,thin](1,0)--(0.5,-0.8659999999999999);
\draw[green,thin](2,0)--(2.5,-0.8659999999999999);
\draw[green,thin](1.5,-2.598)--(1.5,-2.598);
\draw[green,thin](0,0)--(0.0,0.0);
\draw[green,thin](3,0)--(3.0,0.0);
\draw[green,thin](5.0,0.0)--(8.0,0.0);
\draw[green,thin](8,0)--(6.5,-2.598);
\draw[green,thin](5,0)--(6.5,-2.598);
\draw[green,thin](5.5,-0.866)--(7.5,-0.866);
\draw[green,thin](7,0)--(6.0,-1.7319999999999998);
\draw[green,thin](6,0)--(7.0,-1.7319999999999998);
\draw[green,thin](6.0,-1.732)--(7.0,-1.732);
\draw[green,thin](6,0)--(5.5,-0.8659999999999999);
\draw[green,thin](7,0)--(7.5,-0.8659999999999999);
\draw[green,thin](6.5,-2.598)--(6.5,-2.598);
\draw[green,thin](5,0)--(5.0,0.0);
\draw[green,thin](8,0)--(8.0,0.0);
\draw[green,thin](10.0,0.0)--(13.0,0.0);
\draw[green,thin](13,0)--(11.5,-2.598);
\draw[green,thin](10,0)--(11.5,-2.598);
\draw[green,thin](10.5,-0.866)--(12.5,-0.866);
\draw[green,thin](12,0)--(11.0,-1.7319999999999998);
\draw[green,thin](11,0)--(12.0,-1.7319999999999998);
\draw[green,thin](11.0,-1.732)--(12.0,-1.732);
\draw[green,thin](11,0)--(10.5,-0.8659999999999999);
\draw[green,thin](12,0)--(12.5,-0.8659999999999999);
\draw[green,thin](11.5,-2.598)--(11.5,-2.598);
\draw[green,thin](10,0)--(10.0,0.0);
\draw[green,thin](13,0)--(13.0,0.0);
\draw[green,thin](15.0,0.0)--(18.0,0.0);
\draw[green,thin](18,0)--(16.5,-2.598);
\draw[green,thin](15,0)--(16.5,-2.598);
\draw[green,thin](15.5,-0.866)--(17.5,-0.866);
\draw[green,thin](17,0)--(16.0,-1.7319999999999998);
\draw[green,thin](16,0)--(17.0,-1.7319999999999998);
\draw[green,thin](16.0,-1.732)--(17.0,-1.732);
\draw[green,thin](16,0)--(15.5,-0.8659999999999999);
\draw[green,thin](17,0)--(17.5,-0.8659999999999999);
\draw[green,thin](16.5,-2.598)--(16.5,-2.598);
\draw[green,thin](15,0)--(15.0,0.0);
\draw[green,thin](18,0)--(18.0,0.0);
\node at (0.0,-0.0) {$3$};
\node at (1.0,-0.0) {$4$};
\node at (2.0,-0.0) {$3$};
\node at (3.0,-0.0) {$4$};
\node at (0.5,-0.866) {$3$};
\node at (1.5,-0.866) {$4$};
\node at (2.5,-0.866) {$3$};
\node at (1.0,-1.732) {$3$};
\node at (2.0,-1.732) {$4$};
\node at (1.5,-2.598) {$3$};
\node at (5.0,-0.0) {$2$};
\node at (6.0,-0.0) {$3$};
\node at (7.0,-0.0) {$2$};
\node at (8.0,-0.0) {$3$};
\node at (5.5,-0.866) {$2$};
\node at (6.5,-0.866) {$3$};
\node at (7.5,-0.866) {$2$};
\node at (6.0,-1.732) {$3$};
\node at (7.0,-1.732) {$2$};
\node at (6.5,-2.598) {$4$};
\node at (10.0,-0.0) {$1$};
\node at (11.0,-0.0) {$2$};
\node at (12.0,-0.0) {$1$};
\node at (13.0,-0.0) {$2$};
\node at (10.5,-0.866) {$1$};
\node at (11.5,-0.866) {$4$};
\node at (12.5,-0.866) {$2$};
\node at (11.0,-1.732) {$1$};
\node at (12.0,-1.732) {$2$};
\node at (11.5,-2.598) {$2$};
\node at (15.0,-0.0) {$4$};
\node at (16.0,-0.0) {$1$};
\node at (17.0,-0.0) {$4$};
\node at (18.0,-0.0) {$1$};
\node at (15.5,-0.866) {$1$};
\node at (16.5,-0.866) {$4$};
\node at (17.5,-0.866) {$1$};
\node at (16.0,-1.732) {$4$};
\node at (17.0,-1.732) {$1$};
\node at (16.5,-2.598) {$1$};
\end{tikzpicture}

\begin{tikzpicture}[scale=0.5]
\def\sep{2};
\draw[green,thin](0.0,0.0)--(3.0,0.0);
\draw[green,thin](3,0)--(1.5,-2.598);
\draw[green,thin](0,0)--(1.5,-2.598);
\draw[green,thin](0.5,-0.866)--(2.5,-0.866);
\draw[green,thin](2,0)--(1.0,-1.7319999999999998);
\draw[green,thin](1,0)--(2.0,-1.7319999999999998);
\draw[green,thin](1.0,-1.732)--(2.0,-1.732);
\draw[green,thin](1,0)--(0.5,-0.8659999999999999);
\draw[green,thin](2,0)--(2.5,-0.8659999999999999);
\draw[green,thin](1.5,-2.598)--(1.5,-2.598);
\draw[green,thin](0,0)--(0.0,0.0);
\draw[green,thin](3,0)--(3.0,0.0);
\draw[green,thin](5.0,0.0)--(8.0,0.0);
\draw[green,thin](8,0)--(6.5,-2.598);
\draw[green,thin](5,0)--(6.5,-2.598);
\draw[green,thin](5.5,-0.866)--(7.5,-0.866);
\draw[green,thin](7,0)--(6.0,-1.7319999999999998);
\draw[green,thin](6,0)--(7.0,-1.7319999999999998);
\draw[green,thin](6.0,-1.732)--(7.0,-1.732);
\draw[green,thin](6,0)--(5.5,-0.8659999999999999);
\draw[green,thin](7,0)--(7.5,-0.8659999999999999);
\draw[green,thin](6.5,-2.598)--(6.5,-2.598);
\draw[green,thin](5,0)--(5.0,0.0);
\draw[green,thin](8,0)--(8.0,0.0);
\draw[green,thin](10.0,0.0)--(13.0,0.0);
\draw[green,thin](13,0)--(11.5,-2.598);
\draw[green,thin](10,0)--(11.5,-2.598);
\draw[green,thin](10.5,-0.866)--(12.5,-0.866);
\draw[green,thin](12,0)--(11.0,-1.7319999999999998);
\draw[green,thin](11,0)--(12.0,-1.7319999999999998);
\draw[green,thin](11.0,-1.732)--(12.0,-1.732);
\draw[green,thin](11,0)--(10.5,-0.8659999999999999);
\draw[green,thin](12,0)--(12.5,-0.8659999999999999);
\draw[green,thin](11.5,-2.598)--(11.5,-2.598);
\draw[green,thin](10,0)--(10.0,0.0);
\draw[green,thin](13,0)--(13.0,0.0);
\draw[green,thin](15.0,0.0)--(18.0,0.0);
\draw[green,thin](18,0)--(16.5,-2.598);
\draw[green,thin](15,0)--(16.5,-2.598);
\draw[green,thin](15.5,-0.866)--(17.5,-0.866);
\draw[green,thin](17,0)--(16.0,-1.7319999999999998);
\draw[green,thin](16,0)--(17.0,-1.7319999999999998);
\draw[green,thin](16.0,-1.732)--(17.0,-1.732);
\draw[green,thin](16,0)--(15.5,-0.8659999999999999);
\draw[green,thin](17,0)--(17.5,-0.8659999999999999);
\draw[green,thin](16.5,-2.598)--(16.5,-2.598);
\draw[green,thin](15,0)--(15.0,0.0);
\draw[green,thin](18,0)--(18.0,0.0);
\node at (0.0,-0.0) {$3$};
\node at (1.0,-0.0) {$4$};
\node at (2.0,-0.0) {$3$};
\node at (3.0,-0.0) {$4$};
\node at (0.5,-0.866) {$3$};
\node at (1.5,-0.866) {$4$};
\node at (2.5,-0.866) {$3$};
\node at (1.0,-1.732) {$3$};
\node at (2.0,-1.732) {$4$};
\node at (1.5,-2.598) {$3$};
\node at (5.0,-0.0) {$2$};
\node at (6.0,-0.0) {$3$};
\node at (7.0,-0.0) {$2$};
\node at (8.0,-0.0) {$3$};
\node at (5.5,-0.866) {$2$};
\node at (6.5,-0.866) {$4$};
\node at (7.5,-0.866) {$3$};
\node at (6.0,-1.732) {$2$};
\node at (7.0,-1.732) {$3$};
\node at (6.5,-2.598) {$2$};
\node at (10.0,-0.0) {$1$};
\node at (11.0,-0.0) {$2$};
\node at (12.0,-0.0) {$1$};
\node at (13.0,-0.0) {$2$};
\node at (10.5,-0.866) {$2$};
\node at (11.5,-0.866) {$1$};
\node at (12.5,-0.866) {$2$};
\node at (11.0,-1.732) {$2$};
\node at (12.0,-1.732) {$1$};
\node at (11.5,-2.598) {$4$};
\node at (15.0,-0.0) {$4$};
\node at (16.0,-0.0) {$1$};
\node at (17.0,-0.0) {$4$};
\node at (18.0,-0.0) {$1$};
\node at (15.5,-0.866) {$1$};
\node at (16.5,-0.866) {$4$};
\node at (17.5,-0.866) {$1$};
\node at (16.0,-1.732) {$4$};
\node at (17.0,-1.732) {$1$};
\node at (16.5,-2.598) {$1$};
\end{tikzpicture}
\caption{Interlacing triangular arrays with top row $\lambda^{(1)}=3434,\lambda^{(2)}=2323,\lambda^{(3)}=1212,\lambda^{(4)}=4141$.}
\label{fig:Gr24-interlacing-example}
\end{figure}

There are more interlacing triangular arrays whose top row starts with $\lambda^{(1)}=3434,\lambda^{(2)}=2323,\lambda^{(3)}=1212$. Besides the ones in Figure~\ref{fig:Gr24-interlacing-example}, all the others have top row $\lambda^{(4)}=4411$ shown in Figure~\ref{fig:Gr24-interlacing-example-more}. The first five of them contain patterns from (\ref{eq:k-theory-thm-forbidden}) and the last one contains patterns from (\ref{eq:dual-k-thm-forbidden}), highlighted in the figure. 
\begin{figure}[h!]
\centering
\begin{tikzpicture}[scale=0.5]
\def\sep{2};
\draw[green,thin](0.0,0.0)--(3.0,0.0);
\draw[green,thin](3,0)--(1.5,-2.598);
\draw[green,thin](0,0)--(1.5,-2.598);
\draw[green,thin](0.5,-0.866)--(2.5,-0.866);
\draw[green,thin](2,0)--(1.0,-1.7319999999999998);
\draw[green,thin](1,0)--(2.0,-1.7319999999999998);
\draw[green,thin](1.0,-1.732)--(2.0,-1.732);
\draw[green,thin](1,0)--(0.5,-0.8659999999999999);
\draw[green,thin](2,0)--(2.5,-0.8659999999999999);
\draw[green,thin](1.5,-2.598)--(1.5,-2.598);
\draw[green,thin](0,0)--(0.0,0.0);
\draw[green,thin](3,0)--(3.0,0.0);
\draw[green,thin](5.0,0.0)--(8.0,0.0);
\draw[green,thin](8,0)--(6.5,-2.598);
\draw[green,thin](5,0)--(6.5,-2.598);
\draw[green,thin](5.5,-0.866)--(7.5,-0.866);
\draw[green,thin](7,0)--(6.0,-1.7319999999999998);
\draw[green,thin](6,0)--(7.0,-1.7319999999999998);
\draw[green,thin](6.0,-1.732)--(7.0,-1.732);
\draw[green,thin](6,0)--(5.5,-0.8659999999999999);
\draw[green,thin](7,0)--(7.5,-0.8659999999999999);
\draw[green,thin](6.5,-2.598)--(6.5,-2.598);
\draw[green,thin](5,0)--(5.0,0.0);
\draw[green,thin](8,0)--(8.0,0.0);
\draw[green,thin](10.0,0.0)--(13.0,0.0);
\draw[green,thin](13,0)--(11.5,-2.598);
\draw[green,thin](10,0)--(11.5,-2.598);
\draw[green,thin](10.5,-0.866)--(12.5,-0.866);
\draw[green,thin](12,0)--(11.0,-1.7319999999999998);
\draw[green,thin](11,0)--(12.0,-1.7319999999999998);
\draw[green,thin](11.0,-1.732)--(12.0,-1.732);
\draw[green,thin](11,0)--(10.5,-0.8659999999999999);
\draw[green,thin](12,0)--(12.5,-0.8659999999999999);
\draw[green,thin](11.5,-2.598)--(11.5,-2.598);
\draw[green,thin](10,0)--(10.0,0.0);
\draw[green,thin](13,0)--(13.0,0.0);
\draw[green,thin](15.0,0.0)--(18.0,0.0);
\draw[green,thin](18,0)--(16.5,-2.598);
\draw[green,thin](15,0)--(16.5,-2.598);
\draw[green,thin](15.5,-0.866)--(17.5,-0.866);
\draw[green,thin](17,0)--(16.0,-1.7319999999999998);
\draw[green,thin](16,0)--(17.0,-1.7319999999999998);
\draw[green,thin](16.0,-1.732)--(17.0,-1.732);
\draw[green,thin](16,0)--(15.5,-0.8659999999999999);
\draw[green,thin](17,0)--(17.5,-0.8659999999999999);
\draw[green,thin](16.5,-2.598)--(16.5,-2.598);
\draw[green,thin](15,0)--(15.0,0.0);
\draw[green,thin](18,0)--(18.0,0.0);
\node at (0.0,-0.0) {$3$};
\node at (1.0,-0.0) {$4$};
\node at (2.0,-0.0) {$3$};
\node at (3.0,-0.0) {$4$};
\node at (0.5,-0.866) {$3$};
\node at (1.5,-0.866) {$4$};
\node at (2.5,-0.866) {$3$};
\node at (1.0,-1.732) {$3$};
\node at (2.0,-1.732) {$4$};
\node at (1.5,-2.598) {$3$};
\node at (5.0,-0.0) {$2$};
\node at (6.0,-0.0) {$3$};
\node at (7.0,-0.0) {$2$};
\node at (8.0,-0.0) {$3$};
\node at (5.5,-0.866) {$2$};
\node at (6.5,-0.866) {$3$};
\node at (7.5,-0.866) {$2$};
\node at (6.0,-1.732) {$3$};
\node at (7.0,-1.732) {$2$};
\node at (6.5,-2.598) {$4$};
\draw (10.0,-0.0) circle [radius=0.4];
\draw (10.5,-0.866) circle [radius=0.4];
\draw[ultra thick,green](10.0,-0.0)--(10.5,-0.866);
\node at (10.0,-0.0) {$1$};
\node at (11.0,-0.0) {$2$};
\node at (12.0,-0.0) {$1$};
\node at (13.0,-0.0) {$2$};
\node at (10.5,-0.866) {$4$};
\node at (11.5,-0.866) {$1$};
\node at (12.5,-0.866) {$2$};
\node at (11.0,-1.732) {$4$};
\node at (12.0,-1.732) {$2$};
\node at (11.5,-2.598) {$2$};
\node at (15.0,-0.0) {$4$};
\node at (16.0,-0.0) {$4$};
\node at (17.0,-0.0) {$1$};
\node at (18.0,-0.0) {$1$};
\node at (15.5,-0.866) {$4$};
\node at (16.5,-0.866) {$1$};
\node at (17.5,-0.866) {$1$};
\node at (16.0,-1.732) {$1$};
\node at (17.0,-1.732) {$1$};
\node at (16.5,-2.598) {$1$};
\end{tikzpicture}
\begin{tikzpicture}[scale=0.5]
\def\sep{2};
\draw[green,thin](0.0,0.0)--(3.0,0.0);
\draw[green,thin](3,0)--(1.5,-2.598);
\draw[green,thin](0,0)--(1.5,-2.598);
\draw[green,thin](0.5,-0.866)--(2.5,-0.866);
\draw[green,thin](2,0)--(1.0,-1.7319999999999998);
\draw[green,thin](1,0)--(2.0,-1.7319999999999998);
\draw[green,thin](1.0,-1.732)--(2.0,-1.732);
\draw[green,thin](1,0)--(0.5,-0.8659999999999999);
\draw[green,thin](2,0)--(2.5,-0.8659999999999999);
\draw[green,thin](1.5,-2.598)--(1.5,-2.598);
\draw[green,thin](0,0)--(0.0,0.0);
\draw[green,thin](3,0)--(3.0,0.0);
\draw[green,thin](5.0,0.0)--(8.0,0.0);
\draw[green,thin](8,0)--(6.5,-2.598);
\draw[green,thin](5,0)--(6.5,-2.598);
\draw[green,thin](5.5,-0.866)--(7.5,-0.866);
\draw[green,thin](7,0)--(6.0,-1.7319999999999998);
\draw[green,thin](6,0)--(7.0,-1.7319999999999998);
\draw[green,thin](6.0,-1.732)--(7.0,-1.732);
\draw[green,thin](6,0)--(5.5,-0.8659999999999999);
\draw[green,thin](7,0)--(7.5,-0.8659999999999999);
\draw[green,thin](6.5,-2.598)--(6.5,-2.598);
\draw[green,thin](5,0)--(5.0,0.0);
\draw[green,thin](8,0)--(8.0,0.0);
\draw[green,thin](10.0,0.0)--(13.0,0.0);
\draw[green,thin](13,0)--(11.5,-2.598);
\draw[green,thin](10,0)--(11.5,-2.598);
\draw[green,thin](10.5,-0.866)--(12.5,-0.866);
\draw[green,thin](12,0)--(11.0,-1.7319999999999998);
\draw[green,thin](11,0)--(12.0,-1.7319999999999998);
\draw[green,thin](11.0,-1.732)--(12.0,-1.732);
\draw[green,thin](11,0)--(10.5,-0.8659999999999999);
\draw[green,thin](12,0)--(12.5,-0.8659999999999999);
\draw[green,thin](11.5,-2.598)--(11.5,-2.598);
\draw[green,thin](10,0)--(10.0,0.0);
\draw[green,thin](13,0)--(13.0,0.0);
\draw[green,thin](15.0,0.0)--(18.0,0.0);
\draw[green,thin](18,0)--(16.5,-2.598);
\draw[green,thin](15,0)--(16.5,-2.598);
\draw[green,thin](15.5,-0.866)--(17.5,-0.866);
\draw[green,thin](17,0)--(16.0,-1.7319999999999998);
\draw[green,thin](16,0)--(17.0,-1.7319999999999998);
\draw[green,thin](16.0,-1.732)--(17.0,-1.732);
\draw[green,thin](16,0)--(15.5,-0.8659999999999999);
\draw[green,thin](17,0)--(17.5,-0.8659999999999999);
\draw[green,thin](16.5,-2.598)--(16.5,-2.598);
\draw[green,thin](15,0)--(15.0,0.0);
\draw[green,thin](18,0)--(18.0,0.0);
\node at (0.0,-0.0) {$3$};
\node at (1.0,-0.0) {$4$};
\node at (2.0,-0.0) {$3$};
\node at (3.0,-0.0) {$4$};
\node at (0.5,-0.866) {$3$};
\node at (1.5,-0.866) {$4$};
\node at (2.5,-0.866) {$3$};
\node at (1.0,-1.732) {$3$};
\node at (2.0,-1.732) {$4$};
\node at (1.5,-2.598) {$3$};
\draw[ultra thick,green] (7.5,-0.866)--(7.0,-0.0);
\draw (7.5,-0.866) circle [radius=0.4];
\draw (7.0,-0.0) circle [radius=0.4];
\node at (5.0,-0.0) {$2$};
\node at (6.0,-0.0) {$3$};
\node at (7.0,-0.0) {$2$};
\node at (8.0,-0.0) {$3$};
\node at (5.5,-0.866) {$2$};
\node at (6.5,-0.866) {$3$};
\node at (7.5,-0.866) {$4$};
\node at (6.0,-1.732) {$3$};
\node at (7.0,-1.732) {$2$};
\node at (6.5,-2.598) {$4$};
\node at (10.0,-0.0) {$1$};
\node at (11.0,-0.0) {$2$};
\node at (12.0,-0.0) {$1$};
\node at (13.0,-0.0) {$2$};
\node at (10.5,-0.866) {$2$};
\node at (11.5,-0.866) {$1$};
\node at (12.5,-0.866) {$2$};
\node at (11.0,-1.732) {$4$};
\node at (12.0,-1.732) {$2$};
\node at (11.5,-2.598) {$2$};
\node at (15.0,-0.0) {$4$};
\node at (16.0,-0.0) {$4$};
\node at (17.0,-0.0) {$1$};
\node at (18.0,-0.0) {$1$};
\node at (15.5,-0.866) {$4$};
\node at (16.5,-0.866) {$1$};
\node at (17.5,-0.866) {$1$};
\node at (16.0,-1.732) {$1$};
\node at (17.0,-1.732) {$1$};
\node at (16.5,-2.598) {$1$};
\end{tikzpicture}
\begin{tikzpicture}[scale=0.5]
\def\sep{2};
\draw[green,thin](0.0,0.0)--(3.0,0.0);
\draw[green,thin](3,0)--(1.5,-2.598);
\draw[green,thin](0,0)--(1.5,-2.598);
\draw[green,thin](0.5,-0.866)--(2.5,-0.866);
\draw[green,thin](2,0)--(1.0,-1.7319999999999998);
\draw[green,thin](1,0)--(2.0,-1.7319999999999998);
\draw[green,thin](1.0,-1.732)--(2.0,-1.732);
\draw[green,thin](1,0)--(0.5,-0.8659999999999999);
\draw[green,thin](2,0)--(2.5,-0.8659999999999999);
\draw[green,thin](1.5,-2.598)--(1.5,-2.598);
\draw[green,thin](0,0)--(0.0,0.0);
\draw[green,thin](3,0)--(3.0,0.0);
\draw[green,thin](5.0,0.0)--(8.0,0.0);
\draw[green,thin](8,0)--(6.5,-2.598);
\draw[green,thin](5,0)--(6.5,-2.598);
\draw[green,thin](5.5,-0.866)--(7.5,-0.866);
\draw[green,thin](7,0)--(6.0,-1.7319999999999998);
\draw[green,thin](6,0)--(7.0,-1.7319999999999998);
\draw[green,thin](6.0,-1.732)--(7.0,-1.732);
\draw[green,thin](6,0)--(5.5,-0.8659999999999999);
\draw[green,thin](7,0)--(7.5,-0.8659999999999999);
\draw[green,thin](6.5,-2.598)--(6.5,-2.598);
\draw[green,thin](5,0)--(5.0,0.0);
\draw[green,thin](8,0)--(8.0,0.0);
\draw[green,thin](10.0,0.0)--(13.0,0.0);
\draw[green,thin](13,0)--(11.5,-2.598);
\draw[green,thin](10,0)--(11.5,-2.598);
\draw[green,thin](10.5,-0.866)--(12.5,-0.866);
\draw[green,thin](12,0)--(11.0,-1.7319999999999998);
\draw[green,thin](11,0)--(12.0,-1.7319999999999998);
\draw[green,thin](11.0,-1.732)--(12.0,-1.732);
\draw[green,thin](11,0)--(10.5,-0.8659999999999999);
\draw[green,thin](12,0)--(12.5,-0.8659999999999999);
\draw[green,thin](11.5,-2.598)--(11.5,-2.598);
\draw[green,thin](10,0)--(10.0,0.0);
\draw[green,thin](13,0)--(13.0,0.0);
\draw[green,thin](15.0,0.0)--(18.0,0.0);
\draw[green,thin](18,0)--(16.5,-2.598);
\draw[green,thin](15,0)--(16.5,-2.598);
\draw[green,thin](15.5,-0.866)--(17.5,-0.866);
\draw[green,thin](17,0)--(16.0,-1.7319999999999998);
\draw[green,thin](16,0)--(17.0,-1.7319999999999998);
\draw[green,thin](16.0,-1.732)--(17.0,-1.732);
\draw[green,thin](16,0)--(15.5,-0.8659999999999999);
\draw[green,thin](17,0)--(17.5,-0.8659999999999999);
\draw[green,thin](16.5,-2.598)--(16.5,-2.598);
\draw[green,thin](15,0)--(15.0,0.0);
\draw[green,thin](18,0)--(18.0,0.0);
\node at (0.0,-0.0) {$3$};
\node at (1.0,-0.0) {$4$};
\node at (2.0,-0.0) {$3$};
\node at (3.0,-0.0) {$4$};
\node at (0.5,-0.866) {$3$};
\node at (1.5,-0.866) {$4$};
\node at (2.5,-0.866) {$3$};
\node at (1.0,-1.732) {$3$};
\node at (2.0,-1.732) {$4$};
\node at (1.5,-2.598) {$3$};
\node at (5.0,-0.0) {$2$};
\node at (6.0,-0.0) {$3$};
\node at (7.0,-0.0) {$2$};
\node at (8.0,-0.0) {$3$};
\node at (5.5,-0.866) {$2$};
\node at (6.5,-0.866) {$4$};
\node at (7.5,-0.866) {$3$};
\node at (6.0,-1.732) {$2$};
\node at (7.0,-1.732) {$3$};
\node at (6.5,-2.598) {$2$};
\draw[ultra thick,green] (11.5,-0.866)--(12.0,-1.732);
\draw (11.5,-0.866) circle [radius=0.4];
\draw (12.0,-1.732) circle [radius=0.4];
\node at (10.0,-0.0) {$1$};
\node at (11.0,-0.0) {$2$};
\node at (12.0,-0.0) {$1$};
\node at (13.0,-0.0) {$2$};
\node at (10.5,-0.866) {$2$};
\node at (11.5,-0.866) {$1$};
\node at (12.5,-0.866) {$2$};
\node at (11.0,-1.732) {$2$};
\node at (12.0,-1.732) {$4$};
\node at (11.5,-2.598) {$4$};
\node at (15.0,-0.0) {$4$};
\node at (16.0,-0.0) {$4$};
\node at (17.0,-0.0) {$1$};
\node at (18.0,-0.0) {$1$};
\node at (15.5,-0.866) {$4$};
\node at (16.5,-0.866) {$1$};
\node at (17.5,-0.866) {$1$};
\node at (16.0,-1.732) {$1$};
\node at (17.0,-1.732) {$1$};
\node at (16.5,-2.598) {$1$};
\end{tikzpicture}
\begin{tikzpicture}[scale=0.5]
\def\sep{2};
\draw[green,thin](0.0,0.0)--(3.0,0.0);
\draw[green,thin](3,0)--(1.5,-2.598);
\draw[green,thin](0,0)--(1.5,-2.598);
\draw[green,thin](0.5,-0.866)--(2.5,-0.866);
\draw[green,thin](2,0)--(1.0,-1.7319999999999998);
\draw[green,thin](1,0)--(2.0,-1.7319999999999998);
\draw[green,thin](1.0,-1.732)--(2.0,-1.732);
\draw[green,thin](1,0)--(0.5,-0.8659999999999999);
\draw[green,thin](2,0)--(2.5,-0.8659999999999999);
\draw[green,thin](1.5,-2.598)--(1.5,-2.598);
\draw[green,thin](0,0)--(0.0,0.0);
\draw[green,thin](3,0)--(3.0,0.0);
\draw[green,thin](5.0,0.0)--(8.0,0.0);
\draw[green,thin](8,0)--(6.5,-2.598);
\draw[green,thin](5,0)--(6.5,-2.598);
\draw[green,thin](5.5,-0.866)--(7.5,-0.866);
\draw[green,thin](7,0)--(6.0,-1.7319999999999998);
\draw[green,thin](6,0)--(7.0,-1.7319999999999998);
\draw[green,thin](6.0,-1.732)--(7.0,-1.732);
\draw[green,thin](6,0)--(5.5,-0.8659999999999999);
\draw[green,thin](7,0)--(7.5,-0.8659999999999999);
\draw[green,thin](6.5,-2.598)--(6.5,-2.598);
\draw[green,thin](5,0)--(5.0,0.0);
\draw[green,thin](8,0)--(8.0,0.0);
\draw[green,thin](10.0,0.0)--(13.0,0.0);
\draw[green,thin](13,0)--(11.5,-2.598);
\draw[green,thin](10,0)--(11.5,-2.598);
\draw[green,thin](10.5,-0.866)--(12.5,-0.866);
\draw[green,thin](12,0)--(11.0,-1.7319999999999998);
\draw[green,thin](11,0)--(12.0,-1.7319999999999998);
\draw[green,thin](11.0,-1.732)--(12.0,-1.732);
\draw[green,thin](11,0)--(10.5,-0.8659999999999999);
\draw[green,thin](12,0)--(12.5,-0.8659999999999999);
\draw[green,thin](11.5,-2.598)--(11.5,-2.598);
\draw[green,thin](10,0)--(10.0,0.0);
\draw[green,thin](13,0)--(13.0,0.0);
\draw[green,thin](15.0,0.0)--(18.0,0.0);
\draw[green,thin](18,0)--(16.5,-2.598);
\draw[green,thin](15,0)--(16.5,-2.598);
\draw[green,thin](15.5,-0.866)--(17.5,-0.866);
\draw[green,thin](17,0)--(16.0,-1.7319999999999998);
\draw[green,thin](16,0)--(17.0,-1.7319999999999998);
\draw[green,thin](16.0,-1.732)--(17.0,-1.732);
\draw[green,thin](16,0)--(15.5,-0.8659999999999999);
\draw[green,thin](17,0)--(17.5,-0.8659999999999999);
\draw[green,thin](16.5,-2.598)--(16.5,-2.598);
\draw[green,thin](15,0)--(15.0,0.0);
\draw[green,thin](18,0)--(18.0,0.0);
\node at (0.0,-0.0) {$3$};
\node at (1.0,-0.0) {$4$};
\node at (2.0,-0.0) {$3$};
\node at (3.0,-0.0) {$4$};
\node at (0.5,-0.866) {$3$};
\node at (1.5,-0.866) {$4$};
\node at (2.5,-0.866) {$3$};
\node at (1.0,-1.732) {$3$};
\node at (2.0,-1.732) {$4$};
\node at (1.5,-2.598) {$3$};
\draw[ultra thick,green] (5.0,-0.0)--(5.5,-0.866);
\draw (5.0,-0.0) circle [radius=0.4];
\draw (5.5,-0.866) circle [radius=0.4];
\node at (5.0,-0.0) {$2$};
\node at (6.0,-0.0) {$3$};
\node at (7.0,-0.0) {$2$};
\node at (8.0,-0.0) {$3$};
\node at (5.5,-0.866) {$4$};
\node at (6.5,-0.866) {$2$};
\node at (7.5,-0.866) {$3$};
\node at (6.0,-1.732) {$3$};
\node at (7.0,-1.732) {$2$};
\node at (6.5,-2.598) {$4$};
\node at (10.0,-0.0) {$1$};
\node at (11.0,-0.0) {$2$};
\node at (12.0,-0.0) {$1$};
\node at (13.0,-0.0) {$2$};
\node at (10.5,-0.866) {$2$};
\node at (11.5,-0.866) {$1$};
\node at (12.5,-0.866) {$2$};
\node at (11.0,-1.732) {$4$};
\node at (12.0,-1.732) {$2$};
\node at (11.5,-2.598) {$2$};
\node at (15.0,-0.0) {$4$};
\node at (16.0,-0.0) {$4$};
\node at (17.0,-0.0) {$1$};
\node at (18.0,-0.0) {$1$};
\node at (15.5,-0.866) {$4$};
\node at (16.5,-0.866) {$1$};
\node at (17.5,-0.866) {$1$};
\node at (16.0,-1.732) {$1$};
\node at (17.0,-1.732) {$1$};
\node at (16.5,-2.598) {$1$};
\end{tikzpicture}
\begin{tikzpicture}[scale=0.5]
\def\sep{2};
\draw[green,thin](0.0,0.0)--(3.0,0.0);
\draw[green,thin](3,0)--(1.5,-2.598);
\draw[green,thin](0,0)--(1.5,-2.598);
\draw[green,thin](0.5,-0.866)--(2.5,-0.866);
\draw[green,thin](2,0)--(1.0,-1.7319999999999998);
\draw[green,thin](1,0)--(2.0,-1.7319999999999998);
\draw[green,thin](1.0,-1.732)--(2.0,-1.732);
\draw[green,thin](1,0)--(0.5,-0.8659999999999999);
\draw[green,thin](2,0)--(2.5,-0.8659999999999999);
\draw[green,thin](1.5,-2.598)--(1.5,-2.598);
\draw[green,thin](0,0)--(0.0,0.0);
\draw[green,thin](3,0)--(3.0,0.0);
\draw[green,thin](5.0,0.0)--(8.0,0.0);
\draw[green,thin](8,0)--(6.5,-2.598);
\draw[green,thin](5,0)--(6.5,-2.598);
\draw[green,thin](5.5,-0.866)--(7.5,-0.866);
\draw[green,thin](7,0)--(6.0,-1.7319999999999998);
\draw[green,thin](6,0)--(7.0,-1.7319999999999998);
\draw[green,thin](6.0,-1.732)--(7.0,-1.732);
\draw[green,thin](6,0)--(5.5,-0.8659999999999999);
\draw[green,thin](7,0)--(7.5,-0.8659999999999999);
\draw[green,thin](6.5,-2.598)--(6.5,-2.598);
\draw[green,thin](5,0)--(5.0,0.0);
\draw[green,thin](8,0)--(8.0,0.0);
\draw[green,thin](10.0,0.0)--(13.0,0.0);
\draw[green,thin](13,0)--(11.5,-2.598);
\draw[green,thin](10,0)--(11.5,-2.598);
\draw[green,thin](10.5,-0.866)--(12.5,-0.866);
\draw[green,thin](12,0)--(11.0,-1.7319999999999998);
\draw[green,thin](11,0)--(12.0,-1.7319999999999998);
\draw[green,thin](11.0,-1.732)--(12.0,-1.732);
\draw[green,thin](11,0)--(10.5,-0.8659999999999999);
\draw[green,thin](12,0)--(12.5,-0.8659999999999999);
\draw[green,thin](11.5,-2.598)--(11.5,-2.598);
\draw[green,thin](10,0)--(10.0,0.0);
\draw[green,thin](13,0)--(13.0,0.0);
\draw[green,thin](15.0,0.0)--(18.0,0.0);
\draw[green,thin](18,0)--(16.5,-2.598);
\draw[green,thin](15,0)--(16.5,-2.598);
\draw[green,thin](15.5,-0.866)--(17.5,-0.866);
\draw[green,thin](17,0)--(16.0,-1.7319999999999998);
\draw[green,thin](16,0)--(17.0,-1.7319999999999998);
\draw[green,thin](16.0,-1.732)--(17.0,-1.732);
\draw[green,thin](16,0)--(15.5,-0.8659999999999999);
\draw[green,thin](17,0)--(17.5,-0.8659999999999999);
\draw[green,thin](16.5,-2.598)--(16.5,-2.598);
\draw[green,thin](15,0)--(15.0,0.0);
\draw[green,thin](18,0)--(18.0,0.0);
\node at (0.0,-0.0) {$3$};
\node at (1.0,-0.0) {$4$};
\node at (2.0,-0.0) {$3$};
\node at (3.0,-0.0) {$4$};
\node at (0.5,-0.866) {$3$};
\node at (1.5,-0.866) {$4$};
\node at (2.5,-0.866) {$3$};
\node at (1.0,-1.732) {$3$};
\node at (2.0,-1.732) {$4$};
\node at (1.5,-2.598) {$3$};
\draw[ultra thick,green] (6.0,-1.732)--(6.5,-2.598);
\draw (6.0,-1.732) circle [radius=0.4];
\draw (6.5,-2.598) circle [radius=0.4];
\node at (5.0,-0.0) {$2$};
\node at (6.0,-0.0) {$3$};
\node at (7.0,-0.0) {$2$};
\node at (8.0,-0.0) {$3$};
\node at (5.5,-0.866) {$2$};
\node at (6.5,-0.866) {$4$};
\node at (7.5,-0.866) {$3$};
\node at (6.0,-1.732) {$2$};
\node at (7.0,-1.732) {$3$};
\node at (6.5,-2.598) {$4$};
\node at (10.0,-0.0) {$1$};
\node at (11.0,-0.0) {$2$};
\node at (12.0,-0.0) {$1$};
\node at (13.0,-0.0) {$2$};
\node at (10.5,-0.866) {$2$};
\node at (11.5,-0.866) {$1$};
\node at (12.5,-0.866) {$2$};
\node at (11.0,-1.732) {$4$};
\node at (12.0,-1.732) {$2$};
\node at (11.5,-2.598) {$2$};
\node at (15.0,-0.0) {$4$};
\node at (16.0,-0.0) {$4$};
\node at (17.0,-0.0) {$1$};
\node at (18.0,-0.0) {$1$};
\node at (15.5,-0.866) {$4$};
\node at (16.5,-0.866) {$1$};
\node at (17.5,-0.866) {$1$};
\node at (16.0,-1.732) {$1$};
\node at (17.0,-1.732) {$1$};
\node at (16.5,-2.598) {$1$};
\end{tikzpicture}
\begin{tikzpicture}[scale=0.5]
\def\sep{2};
\draw[green,thin](0.0,0.0)--(3.0,0.0);
\draw[green,thin](3,0)--(1.5,-2.598);
\draw[green,thin](0,0)--(1.5,-2.598);
\draw[green,thin](0.5,-0.866)--(2.5,-0.866);
\draw[green,thin](2,0)--(1.0,-1.7319999999999998);
\draw[green,thin](1,0)--(2.0,-1.7319999999999998);
\draw[green,thin](1.0,-1.732)--(2.0,-1.732);
\draw[green,thin](1,0)--(0.5,-0.8659999999999999);
\draw[green,thin](2,0)--(2.5,-0.8659999999999999);
\draw[green,thin](1.5,-2.598)--(1.5,-2.598);
\draw[green,thin](0,0)--(0.0,0.0);
\draw[green,thin](3,0)--(3.0,0.0);
\draw[green,thin](5.0,0.0)--(8.0,0.0);
\draw[green,thin](8,0)--(6.5,-2.598);
\draw[green,thin](5,0)--(6.5,-2.598);
\draw[green,thin](5.5,-0.866)--(7.5,-0.866);
\draw[green,thin](7,0)--(6.0,-1.7319999999999998);
\draw[green,thin](6,0)--(7.0,-1.7319999999999998);
\draw[green,thin](6.0,-1.732)--(7.0,-1.732);
\draw[green,thin](6,0)--(5.5,-0.8659999999999999);
\draw[green,thin](7,0)--(7.5,-0.8659999999999999);
\draw[green,thin](6.5,-2.598)--(6.5,-2.598);
\draw[green,thin](5,0)--(5.0,0.0);
\draw[green,thin](8,0)--(8.0,0.0);
\draw[green,thin](10.0,0.0)--(13.0,0.0);
\draw[green,thin](13,0)--(11.5,-2.598);
\draw[green,thin](10,0)--(11.5,-2.598);
\draw[green,thin](10.5,-0.866)--(12.5,-0.866);
\draw[green,thin](12,0)--(11.0,-1.7319999999999998);
\draw[green,thin](11,0)--(12.0,-1.7319999999999998);
\draw[green,thin](11.0,-1.732)--(12.0,-1.732);
\draw[green,thin](11,0)--(10.5,-0.8659999999999999);
\draw[green,thin](12,0)--(12.5,-0.8659999999999999);
\draw[green,thin](11.5,-2.598)--(11.5,-2.598);
\draw[green,thin](10,0)--(10.0,0.0);
\draw[green,thin](13,0)--(13.0,0.0);
\draw[green,thin](15.0,0.0)--(18.0,0.0);
\draw[green,thin](18,0)--(16.5,-2.598);
\draw[green,thin](15,0)--(16.5,-2.598);
\draw[green,thin](15.5,-0.866)--(17.5,-0.866);
\draw[green,thin](17,0)--(16.0,-1.7319999999999998);
\draw[green,thin](16,0)--(17.0,-1.7319999999999998);
\draw[green,thin](16.0,-1.732)--(17.0,-1.732);
\draw[green,thin](16,0)--(15.5,-0.8659999999999999);
\draw[green,thin](17,0)--(17.5,-0.8659999999999999);
\draw[green,thin](16.5,-2.598)--(16.5,-2.598);
\draw[green,thin](15,0)--(15.0,0.0);
\draw[green,thin](18,0)--(18.0,0.0);
\node at (0.0,-0.0) {$3$};
\node at (1.0,-0.0) {$4$};
\node at (2.0,-0.0) {$3$};
\node at (3.0,-0.0) {$4$};
\node at (0.5,-0.866) {$3$};
\node at (1.5,-0.866) {$4$};
\node at (2.5,-0.866) {$3$};
\node at (1.0,-1.732) {$3$};
\node at (2.0,-1.732) {$4$};
\node at (1.5,-2.598) {$3$};
\draw[ultra thick,green] (6.5,-0.866)--(7.0,-1.732);
\draw (6.5,-0.866) circle [radius=0.4];
\draw (7.0,-1.732) circle [radius=0.4];
\node at (5.0,-0.0) {$2$};
\node at (6.0,-0.0) {$3$};
\node at (7.0,-0.0) {$2$};
\node at (8.0,-0.0) {$3$};
\node at (5.5,-0.866) {$2$};
\node at (6.5,-0.866) {$4$};
\node at (7.5,-0.866) {$3$};
\node at (6.0,-1.732) {$3$};
\node at (7.0,-1.732) {$2$};
\node at (6.5,-2.598) {$4$};
\node at (10.0,-0.0) {$1$};
\node at (11.0,-0.0) {$2$};
\node at (12.0,-0.0) {$1$};
\node at (13.0,-0.0) {$2$};
\node at (10.5,-0.866) {$2$};
\node at (11.5,-0.866) {$1$};
\node at (12.5,-0.866) {$2$};
\node at (11.0,-1.732) {$4$};
\node at (12.0,-1.732) {$2$};
\node at (11.5,-2.598) {$2$};
\node at (15.0,-0.0) {$4$};
\node at (16.0,-0.0) {$4$};
\node at (17.0,-0.0) {$1$};
\node at (18.0,-0.0) {$1$};
\node at (15.5,-0.866) {$4$};
\node at (16.5,-0.866) {$1$};
\node at (17.5,-0.866) {$1$};
\node at (16.0,-1.732) {$1$};
\node at (17.0,-1.732) {$1$};
\node at (16.5,-2.598) {$1$};
\end{tikzpicture}
\caption{Interlacing triangular arrays with top row $\lambda^{(1)}=3434,\lambda^{(2)}=2323,\lambda^{(3)}=1212,\lambda^{(4)}=4411$.}
\label{fig:Gr24-interlacing-example-more}
\end{figure}

Now, \Cref{thm:k-theory}, \Cref{thm:dual-k} and \Cref{thm:ssm} imply that
\[G_{0101}^3=2G_{1010}-G_{1100},\quad (G_{0101}^*)^3=2G_{1010}^*-5G_{1100}^*,\quad S_{0101}^3=2S_{1010}-6S_{1100}.\]
\end{ex}

\subsection{Outline}

In \Cref{sec:prelim} we give background and definitions for interlacing triangular arrays and for partial flag varieties. In \Cref{sec:puzzle}, we prove \Cref{thm:psi-is-bijection}, establishing bijections
\[
\T_{3,n}\overset{\mathscr{T}'}{\underset{\mathscr{T}}\rightleftarrows} \mc{P}_n
\]
between rank-$3$ interlacing triangular arrays and $1/2/3$-puzzles. In \Cref{sec:puzzle-to-colorings} we in turn prove \Cref{thm:counting-m3}, giving bijections
\[
\mc{P}_n \overset{\mathscr{C}}{\underset{\mathscr{P}}\rightleftarrows} \mc{C}_n
\]
between $1/2/3$-puzzles and proper vertex colorings of $\Delta_n$. In \Cref{thm:counting-m4} we also give bijections
\[
\T_{4,n} \overset{\mathscr{D}}{\underset{\mathscr{D}'}\rightleftarrows} \mc{D}_n
\]
between rank-$4$ interlacing triangular arrays and certain edge labelings of $\square_n$. 

In \Cref{sec:geometry}, we show how to convert between $1/2/3$-puzzles and $0/1/10$-puzzles. Tracing this correspondence through to interlacing triangular arrays using the bijection $\mathscr{T}$, we show that forbidding certain of the $0/1/10$-puzzle pieces corresponds to forbidding certain patterns in the arrays. In \Cref{sec:splitting}, we prove the key \Cref{lem:splitting-interlacing} which allows us to split arrays into pairs of arrays of lower rank. Finally, these results are applied in \Cref{sec:geometry} to prove \Cref{thm:k-theory,thm:dual-k,thm:ssm,thm:2-step}.

An extended abstract of part of this work will appear in the proceedings of FPSAC 2025 \cite{fpsac-version}.
\section{Preliminaries}
\label{sec:prelim}
\subsection{Interlacing triangular arrays} 
\label{sec:prelim-interlacing}

We now define interlacing triangular arrays, the main objects of study.

\begin{defin}[Aggarwal--Borodin--Wheeler \cite{Aggarwal-Borodin-Wheeler}]\label{def:interlacing-triangle}
An \emph{interlacing triangular array} $T$ of rank $m$ and height $n$ is a collection $\{T^{(i)}_{j,k} \mid 1 \leq i \leq m, 1 \leq j \leq k \leq n \}$ of positive integers from $[1,m]$, subject to the following conditions:
\begin{itemize}
    \item[(a)] For each $k=1,\ldots,n$ we have an equality of multisets:
    \[\{T^{(i)}_{j,k} \mid 1 \leq i \leq m, 1 \leq j \leq k\} = \{1^k\} \cup \cdots \cup \{m^k\}.\]
    \item[(b)] Let the horizontal coordinate of $T^{(i)}_{j,k}$ be $h(i,j,k) \coloneqq in + j - (n+k)/2$. If $T^{(i)}_{j,k}=T^{(i')}_{j',k}=a$ for some $i,j,i',j',k$ with $h(i,j,k)<h(i',j',k)$, then there must exist $i'',j''$ with $T^{(i'')}_{j'',k-1}=a$ and $h(i,j,k)<h(i'',j'',k-1)<h(i',j',k)$. This entry $T^{(i'')}_{j'',k-1}$ is said to \emph{interlace} with $T^{(i)}_{j,k}$ and $T^{(i')}_{j',k}$.
\end{itemize}

For each $k=1,\ldots,n$ we can view $T^{(\bullet)}_{\bullet,k} \coloneqq \{T^{(i)}_{j,k} \mid 1 \leq i \leq m, 1 \leq j \leq k\}$ as the rows of an array of $m$ triangles, from bottom to top. We denote by $\T_{m,n}$ the set of interlacing triangular arrays of rank $m$ and height $n$ and by $\T_{m,n}(\lambda^{(1)},\ldots,\lambda^{(m)})$ the subset whose top row (that is, the row $k=n$) consists of $\lambda^{(1)},\ldots,\lambda^{(m)}$; here $\lambda^{(i)}\in[m]^n$ for $i\in[m]$. For $T\in\T_{m,n}$, we use $T^{(i)}$ to denote the $i$-th triangle from left to right, and use $T^{(i)}_{\bullet,k}$ to denote its $k$-th row. 
\end{defin}

\begin{ex}
We examine the interlacing triangular array of rank $m=3$ and height $n=4$ from \Cref{fig:interlacing-example}, reproduced below. 

\begin{center}
\begin{tikzpicture}[scale=1]
\def\sep{1.50000000000000};
\draw[green!80,thin](0.000000000000000,0.000000000000000)--(3.00000000000000,0.000000000000000);
\draw[green!80,thin](3.00000000000000,0)--(1.50000000000000,-2.59800000000000);
\draw[green!80,thin](0.000000000000000,0)--(1.50000000000000,-2.59800000000000);
\draw[green!80,thin](0.500000000000000,-0.866000000000000)--(2.50000000000000,-0.866000000000000);
\draw[green!80,thin](2.00000000000000,0)--(1.00000000000000,-1.73200000000000);
\draw[green!80,thin](1.00000000000000,0)--(2.00000000000000,-1.73200000000000);
\draw[green!80,thin](1.00000000000000,-1.73200000000000)--(2.00000000000000,-1.73200000000000);
\draw[green!80,thin](1.00000000000000,0)--(0.500000000000000,-0.866000000000000);
\draw[green!80,thin](2.00000000000000,0)--(2.50000000000000,-0.866000000000000);
\draw[green!80,thin](1.50000000000000,-2.59800000000000)--(1.50000000000000,-2.59800000000000);
\draw[green!80,thin](0.000000000000000,0)--(0.000000000000000,0.000000000000000);
\draw[green!80,thin](3.00000000000000,0)--(3.00000000000000,0.000000000000000);
\draw[green!80,thin](4.50000000000000,0.000000000000000)--(7.50000000000000,0.000000000000000);
\draw[green!80,thin](7.50000000000000,0)--(6.00000000000000,-2.59800000000000);
\draw[green!80,thin](4.50000000000000,0)--(6.00000000000000,-2.59800000000000);
\draw[green!80,thin](5.00000000000000,-0.866000000000000)--(7.00000000000000,-0.866000000000000);
\draw[green!80,thin](6.50000000000000,0)--(5.50000000000000,-1.73200000000000);
\draw[green!80,thin](5.50000000000000,0)--(6.50000000000000,-1.73200000000000);
\draw[green!80,thin](5.50000000000000,-1.73200000000000)--(6.50000000000000,-1.73200000000000);
\draw[green!80,thin](5.50000000000000,0)--(5.00000000000000,-0.866000000000000);
\draw[green!80,thin](6.50000000000000,0)--(7.00000000000000,-0.866000000000000);
\draw[green!80,thin](6.00000000000000,-2.59800000000000)--(6.00000000000000,-2.59800000000000);
\draw[green!80,thin](4.50000000000000,0)--(4.50000000000000,0.000000000000000);
\draw[green!80,thin](7.50000000000000,0)--(7.50000000000000,0.000000000000000);
\draw[green!80,thin](9.00000000000000,0.000000000000000)--(12.0000000000000,0.000000000000000);
\draw[green!80,thin](12.0000000000000,0)--(10.5000000000000,-2.59800000000000);
\draw[green!80,thin](9.00000000000000,0)--(10.5000000000000,-2.59800000000000);
\draw[green!80,thin](9.50000000000000,-0.866000000000000)--(11.5000000000000,-0.866000000000000);
\draw[green!80,thin](11.0000000000000,0)--(10.0000000000000,-1.73200000000000);
\draw[green!80,thin](10.0000000000000,0)--(11.0000000000000,-1.73200000000000);
\draw[green!80,thin](10.0000000000000,-1.73200000000000)--(11.0000000000000,-1.73200000000000);
\draw[green!80,thin](10.0000000000000,0)--(9.50000000000000,-0.866000000000000);
\draw[green!80,thin](11.0000000000000,0)--(11.5000000000000,-0.866000000000000);
\draw[green!80,thin](10.5000000000000,-2.59800000000000)--(10.5000000000000,-2.59800000000000);
\draw[green!80,thin](9.00000000000000,0)--(9.00000000000000,0.000000000000000);
\draw[green!80,thin](12.0000000000000,0)--(12.0000000000000,0.000000000000000);
\node at (0.000000000000000,-0.000000000000000) {$1$};
\node at (1.00000000000000,-0.000000000000000) {$2$};
\draw (1,0) circle [radius=0.3];
\node at (2.00000000000000,-0.000000000000000) {$1$};
\node at (3.00000000000000,-0.000000000000000) {$3$};
\node at (0.500000000000000,-0.866000000000000) {$1$};
\node at (1.50000000000000,-0.866000000000000) {$2$};
\draw (1.5,-.866) circle [radius=0.3];
\node at (2.50000000000000,-0.866000000000000) {$1$};
\node at (1.00000000000000,-1.73200000000000) {$1$};
\node at (2.00000000000000,-1.73200000000000) {$2$};
\node at (1.50000000000000,-2.59800000000000) {$1$};
\node at (4.50000000000000,-0.000000000000000) {$1$};
\node at (5.50000000000000,-0.000000000000000) {$3$};
\node at (6.50000000000000,-0.000000000000000) {$3$};
\node at (7.50000000000000,-0.000000000000000) {$2$};
\draw (7.5,0) circle [radius=0.3];
\node at (5.00000000000000,-0.866000000000000) {$3$};
\node at (6.00000000000000,-0.866000000000000) {$3$};
\node at (7.00000000000000,-0.866000000000000) {$1$};
\node at (5.50000000000000,-1.73200000000000) {$3$};
\node at (6.50000000000000,-1.73200000000000) {$1$};
\node at (6.00000000000000,-2.59800000000000) {$3$};
\node at (9.00000000000000,-0.000000000000000) {$1$};
\node at (10.0000000000000,-0.000000000000000) {$2$};
\draw (10,0) circle [radius=0.3];
\node at (11.0000000000000,-0.000000000000000) {$3$};
\node at (12.0000000000000,-0.000000000000000) {$2$};
\draw (12,0) circle [radius=0.3];
\node at (9.50000000000000,-0.866000000000000) {$2$};
\draw (9.5,-.866) circle [radius=0.3];
\node at (10.5000000000000,-0.866000000000000) {$3$};
\node at (11.5000000000000,-0.866000000000000) {$2$};
\draw (11.5,-.866) circle [radius=0.3];
\node at (10.0000000000000,-1.73200000000000) {$3$};
\node at (11.0000000000000,-1.73200000000000) {$2$};
\node at (10.5000000000000,-2.59800000000000) {$2$};
\end{tikzpicture}
\end{center}

The three triangles in the array have top rows $T^{(1)}_{\bullet,4}=(1,2,1,3)$, $T^{(2)}_{\bullet,4}=(1,3,3,2)$, and $T^{(3)}_{\bullet,4}=(1,2,3,2)$. \Cref{def:interlacing-triangle}(a) requires that each integer $1,2,$ and $3$ appears exactly $4$ times total among these. Similarly, each of $1,2,$ and $3$ appears exactly $k$ times in each of the lower rows $k=1,2,$ and $3$.

The entries equal to $2$ in the top two rows are circled. Those in the top row have horizontal coordinates $2,8,10,$ and $12$, from left to right. Those in the second highest row have horizontal coordinates $\frac{5}{2},\frac{19}{2},$ and $\frac{23}{2}$. The interlacing condition of \Cref{def:interlacing-triangle}(b) asserts that these entries alternate between rows as the horizontal coordinate increases. The same condition holds for other entry values in other pairs of consecutive rows.
\end{ex}

\begin{lemma}\label{lem:interlacing-side-triangles}
Let $T$ be an interlacing triangular array of rank $m$ and height $n$. Then for a fixed $p\in[n]$, $T^{(1)}_{p,k}$ is constant for $p\leq k\leq n$ and $T^{(m)}_{k-p+1,k}$ is constant for $p\leq k\leq n$.
\end{lemma}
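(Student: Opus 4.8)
The plan is to prove the statement for the leftmost triangle $T^{(1)}$ and to deduce the statement for $T^{(m)}$ by a left--right reflection symmetry. Reversing every row of $T$ and the order of its $m$ triangles — concretely, setting $\bar T^{(i)}_{j,k} := T^{(m+1-i)}_{k+1-j,k}$ — preserves conditions (a) and (b) of \Cref{def:interlacing-triangle}; the key point is that $h(i,j,k)+h(m+1-i,k+1-j,k)=mn+1$ is constant, so the reflection reverses horizontal order uniformly. As this symmetry sends $T^{(m)}_{k-p+1,k}$ to $\bar T^{(1)}_{p,k}$, it suffices to show that $T^{(1)}_{p,k}$ is independent of $k$ for $p\le k\le n$.

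I would then record two preliminary facts. First, since $h(2,1,k)\ge n+1>n\ge h(1,k,k)$ whenever $k\le n$, the $k$ entries in row $k$ of $T^{(1)}$ are exactly the $k$ leftmost entries of the whole row $k$ of $T$; hence $T^{(1)}_{p,k}$ is simply the $p$-th entry from the left of row $k$ (when $p\le k$). Second, a \emph{strict alternation property}: for any value $a\in[m]$, condition (a) forces $a$ to occur exactly $k$ times in row $k$ and $k-1$ times in row $k-1$, and then condition (b) plus a pigeonhole count — using that no entry of row $k-1$ can share a horizontal coordinate with an entry of row $k$, so that an occurrence of $a$ in row $k-1$ lying outside the span of the row-$k$ occurrences would leave some gap unfilled — forces each of the $k-1$ gaps between consecutive occurrences of $a$ in row $k$ to contain exactly one occurrence of $a$ in row $k-1$; equivalently, if $x_1<\cdots<x_k$ and $y_1<\cdots<y_{k-1}$ are the coordinates of the occurrences of $a$ in rows $k$ and $k-1$, then $x_i<y_i<x_{i+1}$ for all $i$.

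The core of the argument is an induction on $p$ showing that the sequence of the $p$ leftmost entries of row $k$ is the same for all $k$ with $p\le k\le n$; call the resulting constants $v_1,\dots,v_p$. For the inductive step, fix $k$ with $p+1\le k\le n$, set $a=T^{(1)}_{p,k}$ and $b=T^{(1)}_{p,k-1}$, assume $a\ne b$, and track the value $b$. Let $s$ be the number of $i\le p-1$ with $v_i=b$; by the inductive hypothesis this is also the number of $b$'s among the first $p-1$ entries of row $k$ and of row $k-1$. In row $k-1$, positions $1,\dots,p$ then hold exactly $s+1$ occurrences of $b$ — these $s$, together with position $p$ itself — and these are its leftmost $s+1$ occurrences in that row, the last at position $p$, of coordinate $h(1,p,k-1)=h(1,p,k)+\tfrac12$. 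In row $k$, by contrast, positions $1,\dots,p$ hold only $s$ occurrences of $b$ (position $p$ holds $a\ne b$), so the $(s+1)$-st occurrence of $b$ in row $k$ lies at position $\ge p+1$ of $T^{(1)}$ or still further right, hence has coordinate $\ge h(1,p+1,k)=h(1,p,k)+1$. But the alternation property applied to $b$ (valid since $1\le s+1\le p\le k-1$) says the $(s+1)$-st occurrence of $b$ in row $k$ lies strictly left of the $(s+1)$-st in row $k-1$, i.e.\ has coordinate $<h(1,p,k)+\tfrac12$ — contradicting the bound $\ge h(1,p,k)+1$. Hence $a=b$; chaining over $k=p+1,\dots,n$ and combining with the inductive hypothesis establishes the claim for $p$. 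The base case $p=1$ is the same computation with $s=0$ (using only that position $1$ of row $k$ differs from position $1$ of row $k-1$).

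The step I expect to be the main obstacle is getting the alternation property and the attendant bookkeeping exactly right: condition (b) only constrains consecutive occurrences within a fixed row, so the forcing $x_i<y_i<x_{i+1}$ must be squeezed out via condition (a) and a pigeonhole argument that also rules out occurrences escaping past the extreme ones; and in the inductive step one must correctly identify which occurrence of $b$ is ``the $(s+1)$-st'' in each of the two rows and verify the index stays in the range $[1,k-1]$, which reduces to $s\le p-1$ and $k\ge p+1$. Once these are in place, the one-line coordinate comparison $h(1,p+1,k)=h(1,p,k)+1>h(1,p,k)+\tfrac12=h(1,p,k-1)$ closes everything.
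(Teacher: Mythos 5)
Your proof is correct and follows essentially the same route as the paper's: both arguments rest on the alternation consequence of the interlacing axiom pinning the boundary column of $T^{(1)}$, with the $T^{(m)}$ case obtained by reflection. You have merely unpacked the paper's terse ``with the same argument continuing to the right'' and ``restricting to the bottom $p$ rows'' into an explicit strict-alternation lemma and a careful double induction on position and height.
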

\begin{proof}
Let $a=T^{(1)}_{1,n-1}$. By the interlacing condition on $a$ for row $n-1$ and row $n$, $a$ must appear to the left of $h(1,1,n-1)$, meaning that $T^{(1)}_{1,n}=a$. With the same argument continuing to the right, we can show that $T^{(1)}_{j,n-1}=T^{(1)}_{j,n}$ for all $1\leq j\leq n-1$. Restricting to the bottom $p$ rows for $p=1,\ldots,n$, with the same argument, we obtain that $T^{(1)}_{p,k}$ is constant for $p\leq k\leq n$. The statement for $T^{(m)}$ is the same, arguing from right to left.
\end{proof}

\subsection{The partial flag variety}
\label{sec:prelim-flags}
A \emph{partial flag} $F_{\bullet}$ of dimension $\d=(0=d_0 \leq d_1\leq \cdots \leq d_{m}=n)$ is a chain of linear subspaces $0=F_0 \subset F_1\subset \cdots\subset F_{m-1}\subset F_m=\C^n$ such that $\dim F_i=d_i$ for $i=1,\ldots,n$. The collection of such flags form the \emph{partial flag variety} $\Fl(\d;n)$, which admits a \emph{Bruhat decomposition} into \emph{open Schubert cells} $\bigsqcup_{w\in S_n^{\d}}\Omega_w$, with the index set \[S_n^{\d}=\{w\in S_n\:|\:\mathrm{Des}(w)\subset\{d_1,\ldots,d_{m-1}\}\}.\] The closure of each open Schubert cell is the \emph{Schubert variety} $X_w=\overline{\Omega_w}$. Write $\sigma_w:=[X_w]\in H^{2\ell(w)}(\Fl(\d;n),\Z)$ for the \emph{Schubert class} of $w$, the Poincar\'e dual to the fundamental class of the Schubert variety $X_w$; the Schubert classes $\sigma_w$ for $w \in S^{\d}_n$ form a basis of $H^\ast(\Fl(\d;n),\Z)$. Let $w_0^{\d}$ be the unique element in $S_n^{\d}$ of maximum length, whose Schubert class $\sigma_{w_0^{\d}}$ is the class of a point. Note that \[\ell(w_0^{\d})={n\choose 2}-\sum_{i=1}^m{d_i-d_{i-1}\choose 2}.\]

Let $S_{\d}$ be the \emph{parabolic subgroup} of $S_n$ generated by $\{s_j\:|\: j\notin\d\}$ and let $w_0(\d)$ be the longest element of $S_{\d}$. Thus, $S_n^{\d}$ is the set of minimum-length coset representatives for $S_n/S_{\d}$. We have $w_0^{\d}w_0(\d)=w_0$, where $w_0=n\ n{-}1\cdots 1$ is the longest permutation in $S_n$. 

\begin{remark}
Readers may be more familiar with the the convention that $\bs{d}$ is \emph{strictly} increasing. In our setting, if $d_i=d_{i+1}$ for some $i$, then we necessarily have $F_i=F_{i+1}$ for $F_{\bullet}\in \Fl(\d;n)$ and thus the geometry remains the same. Our convention that $\d$ is only weakly increasing will be convenient in the sequel. 
\end{remark}

For a sequence of permutations $w^{(1)},\ldots,w^{(k)}\in S_n^{\d}$, the \emph{Schubert structure constants} $c^w_{w^{(1)},\ldots,w^{(k)}}$ are defined by
\[
\sigma_{w^{(1)}}\cdots\sigma_{w^{(k)}}=\sum_{w\in S_n}c^w_{w^{(1)},\ldots,w^{(k)}}\sigma_w.
\]
The constant $c^w_{w^{(1)},\ldots,w^{(k)}}$ is zero unless $\ell(w)=\ell(w^{(1)})+\cdots+\ell(w^{(k)})$ and $w\in S_n^{\d}$. For a permutation $w\in S_n^{\d}$, its \emph{dual} is $w^{\vee_{\d}}:=w_0ww_0(\d)\in S_n^{\d}$. By the duality theorem in $H^*(\Fl(\d;n))$, for $\ell(u)+\ell(v)=\ell(w_0^{\d})$ we have $\sigma_u\sigma_v=\delta_{u, v^{\vee_{\d}}}\sigma_{w_0^{\d}}$, where $\delta$ is the Kronecker delta function. 

\begin{defin}\label{def:Schubert-string}
Let $\Sigma$ be a finite alphabet with a total order $\{q_1<\cdots <q_m\}$. An element $\lambda\in\Sigma^{n}$ is called a \emph{Schubert string} of size $n$. We say that $\lambda$ has \emph{type} $q^{\alpha}=q_1^{\alpha_1}\cdots q_m^{\alpha_k}$ if $\lambda$ contains $\alpha_i$ copies of $q_i$, for $i=1,\ldots,m$. 
\end{defin}
In this paper, we will deal alternately with Schubert strings and with their corresponding permutations that lie in certain parabolic quotients of $S_n$.
\begin{defin}\label{def:Schubert-string-permutation}
For a Schubert string $\lambda$ of size $n$ and type $q^{\alpha}$, we associate a permutation $w(\lambda)_{\Sigma}=w\in S_n$ such that $w(d_{i-1}+1)<\cdots<w(d_{i})$ are the positions of $q_i$'s in $\lambda$, where $\alpha=\alpha(\bs{d})$ is defined by $\alpha_1+\cdots+\alpha_{i}=d_i$ for $i=1,\ldots,m$. This map is a bijection from to set of Schubert strings of type $q^{\alpha}$ to $S_n^{\d}$. For $w\in S_n^{\d}$, write $\lambda(w)_{\Sigma}$ for the corresponding Schubert string. When the alphabet $\Sigma$ and its total order are understood, the subscripts might be omitted.
\end{defin}
In the case where the alphabet $\Sigma=\{a,b\}$ has only two letters, two permutations $u,v\in S_n^{\{k\}}$ are dual to each other if $\lambda(u)_{a<b}$ can be obtained from $\lambda(v)_{b<a}$ by first reversing the order and then swapping the letters $a$ and $b$.

\section{From interlacing triangular arrays to puzzles}
\label{sec:puzzle}

The goal of this section is to establish a bijection $\mathscr{T}$ between interlacing triangular arrays of rank $3$ and certain edge labelings of $\Delta_n$. We call these labelings \textit{$1/2/3$-puzzles} since they in turn are in bijection (see \Cref{sec:geometry}) with the $0/1/10$-puzzles of Knutson--Tao \cite{Knutson-Tao-equivariant} as generalized by Knutson--Zinn-Justin \cite[\S 4]{Knutson-Zinn-Justin-2}.

\subsection{$1/2/3$-Puzzles} We now define $1/2/3$-puzzles.
\begin{defin}
\label{def:123-puzzles}
We denote by $\Delta_n$ the \emph{triangular grid graph} with side length $n$; see \Cref{fig:123-puzzle-example}. We view $\Delta_n$ as embedded in the plane as pictured, allowing us to distinguish between the $\Delta$-oriented and $\nabla$-oriented faces. We take the lower left corner as a distinguished base point, and view $\Delta_{n-1}$ as a subgraph of $\Delta_{n}$, sharing the base point.  

A \emph{$1/2/3$-puzzle} is a labeling of the edges of $\Delta_n$ with labels $1$, $2$, and $3$ so that each face has distinct edge labels. We write $\mc{P}_n$ for the set of these puzzles.

The \emph{boundary conditions} $\boldsymbol{\mu}=(\mu^{(1)},\mu^{(2)}, \mu^{(3)})$ of a puzzle $P \in \mc{P}_n$ are the labelings of the three sides of the triangle $\Delta_n$, read clockwise starting from the base point. We write $\mc{P}_n(\boldsymbol{\mu})$ for the set of puzzles from $\mc{P}_k$ with boundary conditions $\boldsymbol{\mu}$.
\end{defin}

\begin{figure}
    \begin{tikzpicture}[scale=0.95]
\draw[thin,blue!80](0,0)--(4,0)--(2.00000000000000,3.46400000000000)--(0,0);
\draw[thin,blue!80](0.500000000000000,0.866000000000000)--(3.50000000000000,0.866000000000000);
\draw[thin,blue!80](1,0)--(2.50000000000000,2.59800000000000);
\draw[thin,blue!80](0.500000000000000,0.866000000000000)--(1,0);
\draw[thin,blue!80](1.00000000000000,1.73200000000000)--(3.00000000000000,1.73200000000000);
\draw[thin,blue!80](2,0)--(3.00000000000000,1.73200000000000);
\draw[thin,blue!80](1.00000000000000,1.73200000000000)--(2,0);
\draw[thin,blue!80](1.50000000000000,2.59800000000000)--(2.50000000000000,2.59800000000000);
\draw[thin,blue!80](3,0)--(3.50000000000000,0.866000000000000);
\draw[thin,blue!80](1.50000000000000,2.59800000000000)--(3,0);

\node at (0.250000000000000,0.433000000000000) {$1$};
\node at (2.25000000000000,3.03100000000000) {$1$};
\node at (0.500000000000000,0.000000000000000) {$2$};
\node at (0.750000000000000,1.29900000000000) {$2$};
\node at (2.75000000000000,2.16500000000000) {$3$};
\node at (1.50000000000000,0.000000000000000) {$3$};
\node at (1.25000000000000,2.16500000000000) {$1$};
\node at (3.25000000000000,1.29900000000000) {$3$};
\node at (2.50000000000000,0.000000000000000) {$2$};
\node at (1.75000000000000,3.03100000000000) {$3$};
\node at (3.75000000000000,0.433000000000000) {$2$};
\node at (3.50000000000000,0.000000000000000) {$1$};
\node at (1.25000000000000,0.433000000000000) {$2$};
\node at (1.75000000000000,2.16500000000000) {$3$};
\node at (1.00000000000000,0.866000000000000) {$1$};
\node at (1.75000000000000,1.29900000000000) {$1$};
\node at (2.25000000000000,1.29900000000000) {$3$};
\node at (2.00000000000000,0.866000000000000) {$2$};
\node at (2.25000000000000,2.16500000000000) {$1$};
\node at (2.75000000000000,0.433000000000000) {$1$};
\node at (3.00000000000000,0.866000000000000) {$2$};
\node at (2.25000000000000,0.433000000000000) {$3$};
\node at (1.25000000000000,1.29900000000000) {$3$};
\node at (1.50000000000000,1.73200000000000) {$2$};
\node at (2.75000000000000,1.29900000000000) {$1$};
\node at (1.75000000000000,0.433000000000000) {$1$};
\node at (2.50000000000000,1.73200000000000) {$2$};
\node at (3.25000000000000,0.433000000000000) {$3$};
\node at (0.750000000000000,0.433000000000000) {$3$};
\node at (2.00000000000000,2.59800000000000) {$2$};
\end{tikzpicture}
\caption{A $1/2/3$-puzzle $P$ with boundary conditions $(1213, 1332, 1232)$.}
\label{fig:123-puzzle-example}
\end{figure}

\subsection{The bijection $\mathscr{T}$} We can now construct the bijection $\mathscr{T}$.
\begin{defin}
\label{def:puzzle-to-array-map}
Given a $1/2/3$-puzzle $P \in \mc{P}_n$, we produce a collection 
\[
\mathscr{T}(P) = \{T^{(i)}_{j,k} \mid 1 \leq i \leq 3, 1 \leq j \leq k \leq n \}
\]
of integers from $\{1,2,3\}$ as follows. For each $k=1,\ldots,n$, consider the copy of $\Delta_k$ inside $\Delta_n$ justified into the lower left corner of $\Delta_n$. The $k$-th row
\[
T^{(1)}_{1,k}, \ldots, T^{(1)}_{k,k}, T^{(2)}_{1,k}, \ldots, T^{(2)}_{k,k}, T^{(3)}_{1,k}, \ldots, T^{(3)}_{k,k},
\]
of $\mathscr{T}(P)$ is obtained by reading the labels of $P$ clockwise around $\Delta_k$, starting from the lower left vertex.
\end{defin}

\begin{ex}
The $1/2/3$-puzzle $P$ from \Cref{fig:123-puzzle-example} is sent by $\mathscr{T}$ to the array $T$ from \Cref{fig:interlacing-example}.
\end{ex}

\begin{prop}
For any $P \in \mc{P}_n$, the array $\mathscr{T}(P)$ is an interlacing triangular array of rank $3$ and height $n$. 
\end{prop}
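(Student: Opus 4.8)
The plan is to verify conditions (a) and (b) of \Cref{def:interlacing-triangle} directly from the construction in \Cref{def:puzzle-to-array-map}, using the ``distinct edge labels on each face'' defining property of $1/2/3$-puzzles as the only input. The key geometric observation is that the $k$-th row of $\mathscr{T}(P)$ consists of the $3k$ edge labels encountered when walking clockwise around the boundary of the copy of $\Delta_k$ sitting in the lower-left corner of $\Delta_n$, and that passing from row $k-1$ to row $k$ corresponds to enlarging $\Delta_{k-1}$ to $\Delta_k$ by adding one ``layer'' of $2k-1$ faces along the top-right. So the whole statement reduces to a local analysis of how the boundary labels change when one such layer of alternating $\Delta$- and $\nabla$-faces is attached.

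For condition (a), I would argue by induction on $k$. The base case $k=1$ is immediate: $\Delta_1$ is a single triangular face, whose three edges carry distinct labels from $\{1,2,3\}$, giving exactly $\{1,2,3\}$. For the inductive step, consider the strip of faces added in going from $\Delta_{k-1}$ to $\Delta_k$; these faces are glued alternately along the old boundary edges and the new boundary edges. Each $\nabla$-face in the strip has two edges on the new boundary and one on the old; each $\Delta$-face has one edge on the new boundary and two on the old (counting the two short ``corner'' edges appropriately), and walking along the strip the old-boundary edges and new-boundary edges interleave. Using the distinctness condition face-by-face, one checks that the multiset of new-boundary labels equals the multiset of old-boundary labels with one extra copy of each of $1,2,3$ added — precisely because each of the $2k-1$ faces contributes, and a careful count of how many old vs.\ new edges each face type contributes yields a net gain of $\{1^1,2^1,3^1\}$. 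Combined with the inductive hypothesis $\{1^{k-1},2^{k-1},3^{k-1}\}$ for row $k-1$, this gives $\{1^k,2^k,3^k\}$ for row $k$.

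For condition (b), the interlacing condition, the point is that consecutive rows $k-1$ and $k$ are boundaries of nested triangles $\Delta_{k-1}\subset\Delta_k$, and the horizontal-coordinate function $h$ of \Cref{def:interlacing-triangle}(b) is (after an affine normalization) exactly the horizontal position at which each boundary edge sits when the two triangle-boundaries are drawn concentrically, the row-$(k-1)$ edges nested just inside the row-$k$ edges. Suppose two row-$k$ entries both equal $a$ and are consecutive among the row-$k$ entries equal to $a$, at horizontal positions $x<x'$. The portion of the strip lying horizontally between $x$ and $x'$ is a union of faces, and between the two row-$k$ edges labeled $a$ there is at least one face each of whose new-boundary edges lies strictly between $x$ and $x'$; since that face has an edge labeled $a$ (one of its new-boundary edges if they were labeled $a$ we'd contradict consecutiveness, so) — more carefully, since no face can have two edges with the same label, the old-boundary edge of the face(s) separating the two $a$'s must be labeled $a$, and its horizontal coordinate lies strictly between $x$ and $x'$. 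This produces the required interlacing entry $T^{(i'')}_{j'',k-1}=a$.

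The main obstacle I anticipate is bookkeeping: making the correspondence between the abstract coordinates $h(i,j,k)$ and the geometric horizontal positions of boundary edges fully precise, especially at the three corners of $\Delta_k$, where the clockwise reading transitions from one side of the triangle to the next and where \Cref{lem:interlacing-side-triangles} (already proved) shows the corner entries are forced to propagate unchanged. I would handle the corners as base/edge cases of the strip analysis and rely on \Cref{lem:interlacing-side-triangles}-type reasoning to confirm the interlacing condition is automatically satisfied there. Everything else is a routine but slightly fiddly case check on the two face shapes $\Delta$ and $\nabla$.
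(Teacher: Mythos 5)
Your verification of condition (a) is fine and is essentially the paper's argument: since every face carries each label exactly once, a signed count of faces ($\Delta$ minus $\nabla$), under which the internal edges cancel, shows each label appears exactly $k$ times on the boundary of $\Delta_k$ (the paper applies this to all of $\Delta_k$ at once rather than inducting over strips, but the idea is the same).

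The interlacing argument, however, has a genuine gap. Each face of the strip $\Delta_k\setminus\Delta_{k-1}$ meets \emph{either} the outer boundary (row $k$) in one edge \emph{or} the inner boundary (row $k-1$) in one edge, never both; its remaining two edges are diagonal edges internal to the strip, which contribute to neither row. So your key step --- ``since no face can have two edges with the same label, the old-boundary edge of the face(s) separating the two $a$'s must be labeled $a$'' --- fails twice over: the faces adjacent to the two outer $a$-edges have no old-boundary edge at all, and for any single face the unique edge labeled $a$ may perfectly well be one of its internal diagonal edges. No one-face local argument can produce the required $a$ on the inner boundary. What is actually needed (and what the paper does) is a propagation argument: assuming no $a$ occurs on the inner boundary between the two consecutive outer $a$'s, the face containing the first outer $a$ forces the adjacent diagonal edge to avoid $a$, whence the next face forces the \emph{next} diagonal edge to be labeled $a$, and this forcing marches along the strip until the face containing the second outer $a$ acquires two edges labeled $a$ --- a contradiction. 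You would also need to run this argument separately for the case where the two consecutive $a$'s straddle a corner of $\Delta_k$ (e.g.\ last entry of $T^{(1)}_{\bullet,k}$ and an entry of $T^{(2)}_{\bullet,k}$), which you defer to ``\Cref{lem:interlacing-side-triangles}-type reasoning''; note that \Cref{lem:interlacing-side-triangles} is a statement \emph{about} interlacing arrays and cannot be applied to $\mathscr{T}(P)$ before interlacing is established, though the relevant fact (shared edges of $\Delta_{k-1}$ and $\Delta_k$ give identical entries in rows $k-1$ and $k$ of $T^{(1)}$ and $T^{(3)}$) does follow directly from the geometry.
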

\begin{proof}
Let $T=\mathscr{T}(P)$ and let $a \in \{1,2,3\}$. Each face of $\Delta_n$ has one edge labeled $a$ by $P$, so we may compute the multiplicity of $a$ as an edge label on the boundary of $\Delta_k$ as the number of $\Delta$-oriented faces of $\Delta_k$ minus the number of $\nabla$-oriented faces, since this causes the contribution of all internal edges to cancel. This number is always $k$, so each $a \in \{1,2,3\}$ occurs $k$ times in the $k$-th row of $T$. Thus $T$ satisfies \Cref{def:interlacing-triangle}(a).

We now show that $T$ satisfies \Cref{def:interlacing-triangle}(b), the interlacing condition. Suppose first that there is some failure of interlacing within $T^{(2)}$. Since $\mathscr{T}$ is equivariant with respect to permutations of the edge labels and array entries, we may assume without loss of generality that there are two consecutive $1$'s in the $k$-th row of $T^{(2)}$ with no interlacing $1$ in the $(k-1)$-st row. We consider the corresponding $P$-labeled subgraph of $\Delta_k$, shown below.
\begin{center}
\begin{tikzpicture}[scale=1, rotate=-60]
    \node[circle, fill=black, scale=0.1] (c0) at ({-2*sqrt(3)/3},2) {};
    \node[circle, fill=black, scale=0.1] (c1) at (0,2) {};
    \node[circle, fill=black, scale=0.1] (c2) at ({2*sqrt(3)/3},2) {};
    \node[circle, fill=black, scale=0.1] (c3) at ({4*sqrt(3)/3},2) {};
    \node[circle, fill=black, scale=0.1] (d0) at ({-3*sqrt(3)/3},3) {};
    \node[circle, fill=black, scale=0.1] (d1) at ({-1*sqrt(3)/3},3) {};
    \node[circle, fill=black, scale=0.1] (d2) at ({1*sqrt(3)/3},3) {};
    \node[circle, fill=black, scale=0.1] (d3) at ({3*sqrt(3)/3},3) {};
    \node (dots) at ({2.5*sqrt(3)/3},2.5) {$\cdots$};
    \node[circle, fill=black, scale=0.1] (d4) at ({5*sqrt(3)/3},3) {};
    \draw[blue!80,thin] (c0) -- (c1) node [midway] {\large\textcolor{black}{$e_3$}};
    \draw[blue!80,thin] (c1) -- (c2) node [midway] {\large\textcolor{black}{$e_6$}};
    \draw[blue!80,thin] (c0) -- (d0) node [midway] {\large\textcolor{black}{$e_1$}};
    \draw[blue!80,thin] (c0) -- (d1) node [midway] {\large\textcolor{black}{$e_2$}};
    \draw[blue!80,thin] (c1) -- (d1) node [midway] {\large\textcolor{black}{$1$}};
    \draw[blue!80,thin] (c1) -- (d2) node [midway] {\large\textcolor{black}{$e_5$}};
    \draw[blue!80,thin] (c2) -- (d2) node [midway] {\large\textcolor{black}{$1$}};
    \draw[blue!80,thin] (d0) -- (d1) node [midway] {\large\textcolor{black}{$1$}};
    \draw[blue!80,thin] (d1) -- (d2) node [midway] {\large\textcolor{black}{$e_4$}};
    \draw[dashed,blue!80] (d2) -- (d3) node [midway] {};
    \draw[dashed,blue!80] (c2) -- (c3) node [midway] {};
    \draw[blue!80,thin] (c3) -- (d4) node [midway] {};
    \draw[blue!80,thin] (c3) -- (d3) node [midway] {\large\textcolor{black}{$1$}};
    \draw[blue!80,thin] (d3) -- (d4) node [midway] {\large\textcolor{black}{$1$}};
\end{tikzpicture}
\end{center}
Since $P \in \mc{P}_n$, we must have $\{e_1,e_2\}=\{2,3\}$. The two indicated $1$'s along $\Delta_k \setminus \Delta_{k-1}$ are assumed consecutive, so $e_4 \neq 1$. By assumption $e_3,e_6 \neq 1$, so the remaining edge of the triangle containing $e_2,e_3$ must be labeled $1$. Thus $e_5 \neq 1$. This all forces each successive southwest-northeast diagonal edge to be labeled $1$, eventually producing an invalidly labeled triangle, a contradiction.

The only other possible failure of interlacing, up to symmetries, is if the last entry in row $k$ of $T^{(1)}$ is a $1$ with the next $1$ in $T^{(2)}$, and with no interlacing $1$ in the $(k-1)$-st row. An almost identical analysis again shows this is impossible, as diagrammed below. 

\begin{center}
\begin{tikzpicture}[scale=1,rotate=-60]
    \node[circle, fill=black, scale=0.1] (c0) at ({-2*sqrt(3)/3},2) {};
    \node[circle, fill=black, scale=0.1] (c1) at (0,2) {};
    \node[circle, fill=black, scale=0.1] (c2) at ({2*sqrt(3)/3},2) {};
    \node[circle, fill=black, scale=0.1] (c3) at ({4*sqrt(3)/3},2) {};
    \node[circle, fill=black, scale=0.1] (d0) at ({-3*sqrt(3)/3},3) {};
    \node[circle, fill=black, scale=0.1] (d1) at ({-1*sqrt(3)/3},3) {};
    \node[circle, fill=black, scale=0.1] (d2) at ({1*sqrt(3)/3},3) {};
    \node[circle, fill=black, scale=0.1] (d3) at ({3*sqrt(3)/3},3) {};
    \node (dots) at ({2.5*sqrt(3)/3},2.5) {$\cdots$};
    \node[circle, fill=black, scale=0.1] (d4) at ({5*sqrt(3)/3},3) {};
    \draw[blue!80,thin] (c0) -- (c1) node [midway] {\large\textcolor{black}{$e_3$}};
    \draw[blue!80,thin] (c1) -- (c2) node [midway] {\large\textcolor{black}{$e_6$}};
    \draw[blue!80,thin] (c0) -- (d0) node [midway] {\large\textcolor{black}{$1$}};
    \draw[blue!80,thin] (c0) -- (d1) node [midway] {\large\textcolor{black}{$e_2$}};
    \draw[blue!80,thin] (c1) -- (d1) node [midway] {\large\textcolor{black}{$1$}};
    \draw[blue!80,thin] (c1) -- (d2) node [midway] {\large\textcolor{black}{$e_5$}};
    \draw[blue!80,thin] (c2) -- (d2) node [midway] {\large\textcolor{black}{$1$}};
    \draw[blue!80,thin] (d0) -- (d1) node [midway] {\large\textcolor{black}{$e_1$}};
    \draw[blue!80,thin] (d1) -- (d2) node [midway] {\large\textcolor{black}{$e_4$}};
    \draw[dashed,blue!80] (d2) -- (d3) node [midway] {};
    \draw[dashed,blue!80] (c2) -- (c3) node [midway] {};
    \draw[blue!80,thin] (c3) -- (d4) node [midway] {};
    \draw[blue!80,thin] (c3) -- (d3) node [midway] {\large\textcolor{black}{$1$}};
    \draw[blue!80,thin] (d3) -- (d4) node [midway] {\large\textcolor{black}{$1$}};
\end{tikzpicture}
\end{center}
\end{proof}

\begin{theorem}
\label{thm:psi-is-bijection}
The map $\mathscr{T}$ is a bijection $\mc{P}_n \to \mc{T}_{3,n}$ restricting, for each boundary condition $\boldsymbol{\lambda}$, to a bijection $\mc{P}_n(\boldsymbol{\lambda} )\to \mc{T}_{3,n}(\boldsymbol{\lambda})$.
\end{theorem}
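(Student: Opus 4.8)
The plan is to show $\mathscr{T}\colon\mc{P}_n\to\T_{3,n}$ is a bijection by explicitly describing the inverse map and checking the two are mutually inverse. We already know from the preceding proposition that $\mathscr{T}$ lands in $\T_{3,n}$, and it clearly respects boundary conditions: the outer boundary of $\Delta_n$ is read off in row $k=n$, so $\mathscr{T}(\mc{P}_n(\bs{\lambda}))\subseteq\T_{3,n}(\bs{\lambda})$. So it suffices to construct a well-defined map $\mathscr{T}'\colon\T_{3,n}\to\mc{P}_n$, verify it also preserves boundary data, and show $\mathscr{T}\circ\mathscr{T}'=\mathrm{id}$ and $\mathscr{T}'\circ\mathscr{T}=\mathrm{id}$. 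The content of the definition of $\mathscr{T}$ is that row $k$ of $T$ records the edge labels of $P$ around the boundary of the nested copy $\Delta_k\subset\Delta_n$; since $\Delta_k$ is obtained from $\Delta_{k-1}$ by adding one more ``strip'' of faces, the transition from row $k-1$ to row $k$ determines exactly which new edges get which labels. So the natural thing is an inductive reconstruction: given $T\in\T_{3,n}$, build $P$ one strip at a time.

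\textbf{Reconstructing $P$ from $T$.} I would proceed by induction on $k$. Row $1$ of $T$ is a triple $(T^{(1)}_{1,1},T^{(2)}_{1,1},T^{(3)}_{1,1})$, which by \Cref{def:interlacing-triangle}(a) is a permutation of $\{1,2,3\}$, hence a valid labeling of the single face $\Delta_1$. Inductively, suppose the labeling of $\Delta_{k-1}$ has been determined with boundary word equal to row $k-1$ of $T$. Passing from $\Delta_{k-1}$ to $\Delta_k$ adds $k$ new $\Delta$-oriented faces and $k-1$ new $\nabla$-oriented faces, forming the bottom strip; the edges shared with $\Delta_{k-1}$ already have labels (they form row $k-1$), and we must show there is a \emph{unique} way to label the new edges so that every new face is rainbow. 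The key observation, which is exactly what the interlacing diagrams in the proof of the preceding proposition establish, is that the interlacing condition between rows $k-1$ and $k$ forces the labels along the strip to propagate uniquely from left to right: each $\Delta$-face has its top edge already labeled (coming from row $k-1$, suitably reindexed) and its left edge labeled by the previous step, so its right edge is forced; then the adjacent $\nabla$-face has two edges known, so its third is forced; and so on. One checks that no contradiction arises precisely because of the interlacing hypothesis, and that the resulting boundary word of $\Delta_k$ is row $k$ of $T$ (using \Cref{lem:interlacing-side-triangles} to pin down the two corner edges of the strip). This defines $\mathscr{T}'(T)\in\mc{P}_n$, and by construction its boundary conditions are $\bs{\lambda}=$ row $n$ of $T$.

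\textbf{Mutual inverses.} That $\mathscr{T}\circ\mathscr{T}'=\mathrm{id}_{\T_{3,n}}$ is immediate from the construction: $\mathscr{T}'$ was built so that the clockwise boundary word of $\Delta_k$ in $\mathscr{T}'(T)$ equals row $k$ of $T$, for every $k$, which is precisely the data $\mathscr{T}$ extracts. For $\mathscr{T}'\circ\mathscr{T}=\mathrm{id}_{\mc{P}_n}$: given $P\in\mc{P}_n$, the labeling $P$ restricted to $\Delta_k$ is \emph{a} valid rainbow-face labeling with boundary word equal to row $k$ of $\mathscr{T}(P)$; since $\mathscr{T}'$'s inductive step produces the \emph{unique} such extension from $\Delta_{k-1}$ to $\Delta_k$, we get $\mathscr{T}'(\mathscr{T}(P))=P$ by induction on $k$. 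Combined with the boundary-preservation remarks above, this gives the restricted bijection $\mc{P}_n(\bs{\lambda})\to\T_{3,n}(\bs{\lambda})$ for each $\bs{\lambda}$.

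\textbf{Main obstacle.} The routine parts are the bookkeeping of nested triangular grids, the reindexing between the linear order of a row of $T$ and the clockwise boundary walk, and the base case. The genuine content is the \emph{uniqueness and consistency} of the strip-by-strip reconstruction: one must verify that the forced propagation of labels along a new strip never produces a non-rainbow face, and that it reproduces exactly the three segments of row $k$ (including correctly placing the two new corner edges). This is the same combinatorial mechanism exhibited in the two rotated diagrams in the proof of the preceding proposition — there it is used to derive a contradiction from a \emph{failure} of interlacing; here one reuses it positively to show the extension exists and is unique given that interlacing \emph{holds}. I expect this verification, done carefully with attention to the boundary faces of the strip (where \Cref{lem:interlacing-side-triangles} is needed), to be the crux of the argument.
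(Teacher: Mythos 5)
Your overall strategy matches the paper's: build $\mathscr{T}'$ inductively strip by strip, appeal to the interlacing condition to show each new strip of $\Delta_k\setminus\Delta_{k-1}$ has a unique proper labeling consistent with rows $k$ and $k-1$ of $T$, and then note $\mathscr{T}$ and $\mathscr{T}'$ are mutually inverse by construction. However, the step you explicitly defer --- showing that the forced propagation of labels along a strip is consistent, i.e.\ never produces a repeated label in a face --- is precisely the nontrivial content. The paper isolates this as a self-contained \Cref{lem:one-row}: given interlacing strings $\bs{a}=(a_0,\ldots,a_{k+1})$ and $\bs{b}=(b_1,\ldots,b_{k-1})$ there is a unique proper labeling of the one-row strip with that boundary, proved by an induction that peels off \emph{both} end triangles at once (forcing $b_0$ and $b_k$), then verifies that the shortened strings $\bs{b}'=(b_0,\ldots,b_k)$ and $\bs{a}'=(a_2,\ldots,a_{k-1})$ are again interlacing. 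Your one-directional left-to-right forcing can be made to work, but it still needs a consistency argument at the right end of the strip, and you do not supply one; so as written the proposal has a gap exactly where the paper does its real work. Two smaller remarks: (i) one also has to observe that the strings $\bs{a}$ (last entry of $T^{(1)}_{\bullet,n}$, all of $T^{(2)}_{\bullet,n}$, first entry of $T^{(3)}_{\bullet,n}$) and $\bs{b}=T^{(2)}_{\bullet,n-1}$ are interlacing in the one-row sense, which the paper derives from \Cref{def:interlacing-triangle}; and (ii) \Cref{lem:interlacing-side-triangles} is not actually invoked in the paper's proof of this theorem --- the corner edges of the strip are fixed directly by $a_0=T^{(1)}_{n,n}$ and $a_{k+1}=T^{(3)}_{1,n}$, not by that lemma.
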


Before proving \Cref{thm:psi-is-bijection}, we prove a lemma.

\begin{defin}
\label{def:interlacing-one-row}
Tuples $\boldsymbol{a}=(a_0,a_1,\ldots,a_{k+1})\in\{1,2,3\}^{k+2}$ and $\boldsymbol{b}=(b_1,\ldots,b_{k-1})\in\{1,2,3\}^{k-1}$ are \emph{interlacing} if for each $i\in\{1,2,3\}$, the indices that $a_{j_0}=\cdots=a_{j_m}=i$ and $b_{k_1}=\cdots=b_{k_m}=i$ satisfy $j_0\leq k_1<j_1\leq\cdots\leq k_m<j_{m}$.
\end{defin}

\begin{lemma}
\label{lem:one-row}
Let $\boldsymbol{a}=(a_0,\ldots,a_{k+1})\in\{1,2,3\}^{k+2}$ and $\boldsymbol{b}=(b_1,\ldots,b_{k-1})\in\{1,2,3\}^{k-1}$ be interlacing. Then there exists a unique proper edge labeling of the one-row triangular array whose boundary is labeled by $\boldsymbol{a}$ and $\boldsymbol{b}$ as shown below. 
\begin{center}
\begin{tikzpicture}[scale=1,rotate=-60]
    \node[circle, fill=black, scale=0.1] (c0) at ({-2*sqrt(3)/3},2) {};
    \node[circle, fill=black, scale=0.1] (c1) at (0,2) {};
    \node[circle, fill=black, scale=0.1] (c2) at ({2*sqrt(3)/3},2) {};
    \node[circle, fill=black, scale=0.1] (c3) at ({4*sqrt(3)/3},2) {};
    \node[circle, fill=black, scale=0.1] (c4) at ({6*sqrt(3)/3},2) {};
    \node[circle, fill=black, scale=0.1] (d0) at ({-3*sqrt(3)/3},3) {};
    \node[circle, fill=black, scale=0.1] (d1) at ({-1*sqrt(3)/3},3) {};
    \node[circle, fill=black, scale=0.1] (d2) at ({1*sqrt(3)/3},3) {};
    \node[circle, fill=black, scale=0.1] (d3) at ({3*sqrt(3)/3},3) {};
    \node (dots) at ({2.5*sqrt(3)/3},2.5) {$\cdots$};
    \node[circle, fill=black, scale=0.1] (d4) at ({5*sqrt(3)/3},3) {};
    \node[circle, fill=black, scale=0.1] (d5) at ({7*sqrt(3)/3},3) {};
    \draw[blue!80,thin] (c0) -- (c1) node [midway] {\large\textcolor{black}{$b_1$}};
    \draw[blue!80,thin] (c1) -- (c2) node [midway] {\large\textcolor{black}{$b_2$}};
    \draw[blue!80,thin] (c0) -- (d0) node [midway] {\large\textcolor{black}{$a_0$}};
    \draw[blue!80,thin] (c0) -- (d1) node [midway] {\large\textcolor{black}{$b_{0}$}};
    \draw[blue!80,thin] (c1) -- (d1) node [midway] {};
    \draw[blue!80,thin] (c1) -- (d2) node [midway] {};
    \draw[blue!80,thin] (c2) -- (d2) node [midway] {};
    \draw[blue!80,thin] (d0) -- (d1) node [midway] {\large\textcolor{black}{$a_1$}};
    \draw[blue!80,thin] (d1) -- (d2) node [midway] {\large\textcolor{black}{$a_2$}};
    \draw[dashed,blue!80] (d2) -- (d3) node [midway] {};
    \draw[dashed,blue!80] (c2) -- (c3) node [midway] {};
    \draw[blue!80,thin] (c3) -- (d4) node [midway] {};
    \draw[blue!80,thin] (c3) -- (d3) node [midway] {};
    \draw[blue!80,thin] (d3) -- (d4) node [midway] {};
    \draw[blue!80,thin] (c4) -- (d5) node at ({7*sqrt(3)/3},2.5) {\large\textcolor{black}{$a_{k{+}1}$}};
    \draw[blue!80,thin] (d4) -- (d5) node [midway] {\large\textcolor{black}{$a_k$}};
    \draw[blue!80,thin] (c3) -- (c4) node [midway] {\large\textcolor{black}{$b_{k{-}1}$}};
    \draw[blue!80,thin] (c4) -- (d4) node [midway] {\large\textcolor{black}{$b_{k}$}};
\end{tikzpicture}
\end{center}
\end{lemma}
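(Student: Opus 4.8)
The plan is to prove both existence and uniqueness at once, by induction on the width $k$, peeling off the two leftmost faces of the array at each step. First I would fix notation: list the $2k-1$ faces $F_1,\dots,F_{2k-1}$ from left to right, so that $F_{2i-1}$ carries the boundary edge labeled $a_i$ (with $F_1$ and $F_{2k-1}$ carrying $a_0$ and $a_{k+1}$ as well) and $F_{2i}$ carries the boundary edge labeled $b_i$; then the $2k-2$ unlabeled edges are exactly the shared edges $e_j \coloneqq F_j\cap F_{j+1}$. In the base case $k=1$ there are no interior edges, and since $\boldsymbol b$ is empty the interlacing condition of \Cref{def:interlacing-one-row} forces $(a_0,a_1,a_2)$ to contain each color once, so the single face is automatically proper and there is nothing to choose.

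For the inductive step (so $k\ge 2$) I would show the interior edges $e_1=F_1\cap F_2$ and $e_2=F_2\cap F_3$ are forced. If $a_0=a_1$, that color would occupy consecutive positions $0,1$ of $\boldsymbol a$, and the interlacing condition would require a $\boldsymbol b$-index strictly between them, which is impossible as $\boldsymbol b$ is indexed by $\{1,\dots,k-1\}$; hence $a_0\ne a_1$, and since $F_1$ has edge labels $a_0,a_1,e_1$ we must take $e_1=\gamma$, the color not in $\{a_0,a_1\}$. Likewise, interlacing applied to the entry $b_1$ (at index $1$ of $\boldsymbol b$) forces $b_1$ to appear in $\boldsymbol a$ at position $0$ or $1$, so $b_1\in\{a_0,a_1\}$ and in particular $b_1\ne\gamma$; as $F_2$ has edge labels $b_1,\gamma,e_2$, we must take $e_2$ to be the third color avoiding $\{b_1,\gamma\}$, which lies in $\{a_0,a_1\}$. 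Deleting $F_1,F_2$---now correctly and properly labeled---leaves a one-row array of width $k-1$ with boundary data $\boldsymbol a'=(e_2,a_2,\dots,a_{k+1})$ and $\boldsymbol b'=(b_2,\dots,b_{k-1})$ (the edge $e_2$ now being on the boundary). Granting that $(\boldsymbol a',\boldsymbol b')$ is again interlacing, the inductive hypothesis gives a unique valid labeling of this smaller array, which together with the forced $e_1,e_2$ yields existence and uniqueness for the original: any valid labeling must make the forced choices and then restrict to a valid labeling of the smaller array.

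The step I expect to be the main obstacle is checking that $(\boldsymbol a',\boldsymbol b')$ inherits the interlacing property. Passing from $\boldsymbol a$ to $\boldsymbol a'$ deletes positions $0$ and $1$ and prepends $e_2\in\{a_0,a_1\}$, while passing from $\boldsymbol b$ to $\boldsymbol b'$ deletes position $1$; thus every occurrence of every color beyond the deleted positions shifts left by exactly one. Splitting into the two cases $b_1=a_0$ (so $e_2=a_1$) and $b_1=a_1$ (so $e_2=a_0$), I would track the occurrence positions of each of the three colors and verify that the inequalities $j_0\le k_1< j_1\le k_2<\cdots$ for $(\boldsymbol a',\boldsymbol b')$ reduce to those for $(\boldsymbol a,\boldsymbol b)$, using $a_0\ne a_1$, $b_1\in\{a_0,a_1\}$, and $\{a_0,a_1,\gamma\}=\{1,2,3\}$. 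This verification is elementary bookkeeping, but it is exactly where the interlacing hypothesis does its work---both in keeping $e_1,e_2$ from ever being free, and, once the recursion bottoms out at width $1$, in making the final face $F_{2k-1}$ automatically proper. (As a consistency check, the left--right mirror symmetry of the figure shows one could equally peel from the right, which is why the edge labeled $b_k$ in the figure is determined rather than part of the data.)
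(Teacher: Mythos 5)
Your proof is correct and follows the same overall strategy as the paper (uniqueness because the interior edges are forced left to right; existence by induction on $k$, with base case $k=1$), but the decomposition in the inductive step is genuinely different. You peel off the two leftmost faces $F_1,F_2$ and recurse on a width-$(k{-}1)$ strip with data $\boldsymbol a'=(e_2,a_2,\ldots,a_{k+1})$, $\boldsymbol b'=(b_2,\ldots,b_{k-1})$, keeping the $\boldsymbol a/\boldsymbol b$ orientation fixed and touching only the left end of the boundary data. The paper instead removes one face from each end of the strip, computing $b_0\notin\{a_0,a_1\}$ and $b_k\notin\{a_k,a_{k+1}\}$, and recurses on a width-$(k{-}1)$ strip whose $\boldsymbol a$-side and $\boldsymbol b$-side have swapped: the new long data is $\boldsymbol b'=(b_0,b_1,\ldots,b_k)$ and the new short data is $\boldsymbol a'=(a_2,\ldots,a_{k-1})$. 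For verifying that the reduced pair still interlaces, the paper's symmetric peel lets it avoid case analysis by a counting argument (first/last occurrences and the between-occurrence condition force at least one more of each color in $\boldsymbol b'$ than in $\boldsymbol a'$, and $|\boldsymbol b'|-|\boldsymbol a'|=3$ then pins down equality), whereas your one-sided peel leads to the two cases $b_1=a_0$ and $b_1=a_1$; both verifications go through, and your observation that $b_1\in\{a_0,a_1\}$ (forced by $j_0\le k_1$ with $k_1=1$ in the interlacing definition) is exactly what makes $e_2$ well-defined and the index bookkeeping close.
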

\begin{proof}
It is clear that there is at most one such labeling, since the labels of the internal edges are forced one-by-one, moving left to right. We use induction on $k$ to prove that this process in fact always produces a valid edge labeling. 

When $N=1$, the interlacing condition of $\boldsymbol{a}=(a_0,a_1,a_2)$ and $\boldsymbol{b}=()$ says that $\{a_0,a_1,a_2\}=\{1,2,3\}$ and thus we obtain a proper labeling of a single triangular face. 

Now assume $k\geq2$. The interlacing condition implies $a_0\neq a_1$ and $a_{k}\neq a_{k+1}$. Let $b_0$ be the unique label distinct from $a_0$ and $a_1$, and $b_{k}$ be the unique label distinct from $a_k$ and $a_{k+1}$. We now show that $\boldsymbol{b'}=(b_0,b_1,\ldots,b_k)\in\{1,2,3\}^{k+1}$ and $\boldsymbol{a'}=(a_2,\ldots,a_{k-1})\in\{1,2,3\}^{k-2}$ are interlacing. 
Take $i\in\{1,2,3\}$. To see that $i$ appears in $\boldsymbol{b'}$ earlier than in $\boldsymbol{a'}$, we have two cases: $b_0=i$ and $b_0\neq i$, where the first case is evident. When $b_0\neq i$, we have $i\in\{a_0,a_1\}$, so the interlacing condition on $\boldsymbol{a}$ and $\boldsymbol{b}$ implies that $i$ appears in $\boldsymbol{b'}$ first. Similarly, the last occurrence of $i$ in $\boldsymbol{b'}$ happens after the last occurrence of $i$ in $\boldsymbol{a'}$. Moreover, the interlacing condition on $\boldsymbol{a}$ and $\boldsymbol{b}$ states that between two consecutive occurrences of $i$ in $\boldsymbol{a'}$, there is an occurrence of $i$ in $\boldsymbol{b'}$, meaning that, so far, the number of $i$'s in $\boldsymbol{b'}$ is at least $1$ more than the number of $i$'s in $\boldsymbol{a'}$. Together with the fact that $|\boldsymbol{b'}|-|\boldsymbol{a'}|=3$, we conclude that $\boldsymbol{b'}$ and $\boldsymbol{a'}$ are interlacing.

By induction, the middle edges can be labeled properly, extending to a proper labeling of the whole graph.
\end{proof}

\begin{proof}[Proof of \Cref{thm:psi-is-bijection}]
We define a map $\mathscr{T}':\mc{T}_{3,n} \to \mc{P}_n$ inductively; see \Cref{ex:T-prime-example}. Let $T\in\mc{T}_{3,n}$. When $n=1$, $T$ consists of three distinct numbers $T^{(1)}_{1,1},T^{(2)}_{1,1},T^{(3)}_{1,1}$ forming a permutation of $\{1,2,3\}$. We may properly label the edges of $\Delta_1$ by these numbers, in clockwise order starting from the lower left vertex. When $n\geq2$, assume that we have defined a proper edge labeling of $\Delta_{n-1} \subset \Delta_n$. Let
\[
\bs{a}=\left(T^{(1)}_{n,n},T^{(2)}_{1,n},T^{(2)}_{2,n},\ldots,T^{(2)}_{n,n},T^{(3)}_{1,n} \right)\in\{1,2,3\}^{n+2}
\]
consist of the entry in the top right corner of $T^{(1)}$, the top row of $T^{(2)}$, and the top left entry of $T^{(3)}$, and let $\bs{b}=(T^{(2)}_{1,n-1},\ldots,T^{(2)}_{n-1,n-1})\in\{1,2,3\}^{n-1}$ consist of the $(n-1)$-st row of $T^{(2)}$. By definition of $\mc{T}_{3,n}$, $\bs{a}$ and $\bs{b}$ are interlacing in the sense of \Cref{def:interlacing-one-row}. By \Cref{lem:one-row}, the edges of $\Delta_n \setminus \Delta_{n-1}$ can be properly labeled in a unique way, subject to the boundary conditions imposed by $\bs{a}$ and $\bs{b}$. We define $\mathscr{T}'(T)$ to be this unique labeling. By construction, $\mathscr{T}'$ and $\mathscr{T}$ are inverse to each other, so $\mathscr{T}$ is a bijection. Finally, $\mathscr{T}$ preserves the boundary condition $\bs{\lambda}$ by construction. 
\end{proof}

\begin{ex}
\label{ex:T-prime-example}
Let $T \in \mc{T}_{3,4}$ be the interlacing triangular array from \Cref{fig:interlacing-example}. We show how to compute the map $\mathscr{T}'$ from the proof of \Cref{thm:psi-is-bijection} applied to the top row of $T$. We have $\bs{a}\coloneqq\left(T^{(1)}_{4,4},T^{(2)}_{1,4},T^{(2)}_{2,4},T^{(2)}_{3,4},T^{(2)}_{4,4},T^{(3)}_{1,4} \right)=(3,1,3,3,2,1)$ and $\bs{b}=(3,3,1)$. This yields the boundary conditions of $\Delta_4 \setminus \Delta_3$ shown below in bold; the unbolded numbers are the unique proper labeling of the interior edges.
\begin{center}
\begin{tikzpicture}[scale=1,rotate=-60]
    \node[circle, fill=black, scale=0.1] (c0) at ({-2*sqrt(3)/3},2) {};
    \node[circle, fill=black, scale=0.1] (c1) at (0,2) {};
    \node[circle, fill=black, scale=0.1] (c2) at ({2*sqrt(3)/3},2) {};
    \node[circle, fill=black, scale=0.1] (c3) at ({4*sqrt(3)/3},2) {};
    \node[circle, fill=black, scale=0.1] (d0) at ({-3*sqrt(3)/3},3) {};
    \node[circle, fill=black, scale=0.1] (d1) at ({-1*sqrt(3)/3},3) {};
    \node[circle, fill=black, scale=0.1] (d2) at ({1*sqrt(3)/3},3) {};
    \node[circle, fill=black, scale=0.1] (d3) at ({3*sqrt(3)/3},3) {};
    \node[circle, fill=black, scale=0.1] (d4) at ({5*sqrt(3)/3},3) {};
    \draw[blue!80,thin] (c0) -- (c1) node [midway] {\large\textcolor{black}{$\mathbf{3}$}};
    \draw[blue!80,thin] (c1) -- (c2) node [midway] {\large\textcolor{black}{$\mathbf{3}$}};
    \draw[blue!80,thin] (c0) -- (d0) node [midway] {\large\textcolor{black}{$\mathbf{3}$}};
    \draw[blue!80,thin] (c0) -- (d1) node [midway] {\large\textcolor{black}{$2$}};
    \draw[blue!80,thin] (c1) -- (d1) node [midway] {\large\textcolor{black}{$1$}};
    \draw[blue!80,thin] (c1) -- (d2) node [midway] {\large\textcolor{black}{$2$}};
    \draw[blue!80,thin] (c2) -- (d2) node [midway] {\large\textcolor{black}{$1$}};
    \draw[blue!80,thin] (d0) -- (d1) node [midway] {\large\textcolor{black}{$\mathbf{1}$}};
    \draw[blue!80,thin] (d1) -- (d2) node [midway] {\large\textcolor{black}{$\mathbf{3}$}};
    \draw[blue!80,thin] (d2) -- (d3) node [midway] {\large\textcolor{black}{$\mathbf{3}$}};
    \draw[blue!80,thin] (c2) -- (c3) node [midway] {\large\textcolor{black}{$\mathbf{1}$}};
    \draw[blue!80,thin] (c2) -- (d3) node [midway] {\large\textcolor{black}{$2$}};
    \draw[blue!80,thin] (c3) -- (d4) node [midway] {\large\textcolor{black}{$\mathbf{1}$}};
    \draw[blue!80,thin] (c3) -- (d3) node [midway] {\large\textcolor{black}{$3$}};
    \draw[blue!80,thin] (d3) -- (d4) node [midway] {\large\textcolor{black}{$\mathbf{2}$}};
\end{tikzpicture}
\end{center}
This agrees with the rightmost northwest-southeast layer of triangles in the $1/2/3$-puzzle $P=\mathscr{T}'(T)$ from \Cref{fig:123-puzzle-example}.
\end{ex}

\section{From puzzles to graph colorings}
\label{sec:puzzle-to-colorings}
In this section, we relate interlacing triangular arrays and $123$-puzzles to graph colorings. We prove Conjecture~A.3 of \cite{Aggarwal-Borodin-Wheeler} and disprove Conjecture~A.5.  

\subsection{$\T_{3,n}$ and the triangular grid}
\label{sec:triangular-coloring}
Recall from \Cref{sec:puzzle} that $\Delta_n$ denotes the triangular grid graph with side length $n$, a graph on ${n+2\choose 2}$ vertices. Let $\mc{C}_n$ denote the set of proper colorings of the vertices of $\Delta_n$ by $\{0,1,2,3\}$ such that the base point is colored $0$. We write $\mc{C}_n(\bs{\kappa})$ for the set of colorings from $\mc{C}_n$ with fixed coloring $\bs{\kappa}$ of the boundary vertices. We have proven in \Cref{thm:psi-is-bijection} that the map $\mathscr{T}$ is a bijection $\mathscr{T}:\mc{P}_n \to \mc{T}_{3,n}$. We now define a map $\mathscr{P}: \mc{C}_n \to \mc{P}_n$, proven in \Cref{thm:counting-m3} to be a bijection.

\begin{defin}
    Given a proper vertex coloring $C\in\mc{C}_n$, define an edge labeling $\mathscr{P}(C)$ as follows. For an edge $e$ of $\Delta_n$ incident to vertices $v$ and $v'$, if $0 \in \{c(v),c(v')\}$, let\footnote{We thank the anonymous referee for providing the following interesting equivalent description of $\mathscr{P}(C)(e)$. Let $i=c(v)+c(v')$, then $\mathscr{P}(C)(e)=\begin{cases}
        i & \text{if $i\leq 3$} \\ 6-i & \text{otherwise}
    \end{cases}$.} $\mathscr{P}(C)(e)$ be the unique element of $\{c(v),c(v')\} \setminus \{0\}$, otherwise, let $\mathscr{P}(C)(e)$ be the unique element of $\{0,1,2,3\} \setminus \{0,c(v),c(v')\}$. See \Cref{fig:phi-example} for an example. By \Cref{thm:counting-m3} below, $\mathscr{P}$ is invertible on $1/2/3$-puzzles and so determines boundary colors $\colr(\bs{\lambda})$ given boundary conditions $\bs{\lambda}$ of a puzzle.
\end{defin}

\begin{theorem}
\label{thm:counting-m3}
The map $\mathscr{P}$ is a bijection $\mc{C}_n\rightarrow \mc{P}_n$ sending $\mc{C}(\colr(\bs{\lambda}))$ to $\mc{P}_n(\bs{\lambda})$.
\end{theorem}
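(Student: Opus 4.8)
The plan is to construct an explicit two-sided inverse $\mathscr{C}\colon\mc{P}_n\to\mc{C}_n$ of $\mathscr{P}$ and to track boundary data along the way. The organizing observation is that the color set $\{0,1,2,3\}$ carries the group structure of $(\Z/2)^2$ via $0\leftrightarrow(0,0)$, $1\leftrightarrow(0,1)$, $2\leftrightarrow(1,0)$, $3\leftrightarrow(1,1)$, and that under this identification the rule defining $\mathscr{P}$ becomes simply $\mathscr{P}(C)(e)=c(v)+c(v')$ for $e$ the edge on vertices $v,v'$ (a short case check against the definition confirms this, and it recovers the equivalent formula attributed to the referee above). In this language, properness of $C$ is exactly the assertion that every edge label is nonzero, i.e.\ lies in $\{1,2,3\}$, and the remaining arguments all reduce to telescoping identities and small checks in $(\Z/2)^2$.

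First I would verify that $\mathscr{P}(C)$ is a $1/2/3$-puzzle: on each triangular face with vertices $u,v,w$ the three edge labels $c(u)+c(v)$, $c(v)+c(w)$, $c(u)+c(w)$ sum to $0$ and are individually nonzero, and three nonzero elements of $(\Z/2)^2$ that sum to $0$ must be pairwise distinct, so the face is properly labeled. Then I would define $\mathscr{C}$: for $P\in\mc{P}_n$ and a vertex $x$ of $\Delta_n$, let $\mathscr{C}(P)(x)$ be the sum in $(\Z/2)^2$ of the $P$-labels along any path in $\Delta_n$ from the base point to $x$, with the base point colored $0$. The crux is that this is well defined, i.e.\ path-independent: since $\Delta_n$ is planar and $2$-connected, the boundaries of its triangular faces generate the cycle space, so it suffices that around each such face the $P$-labels sum to $0$ — and this holds precisely because those labels are the distinct values $1,2,3$ of a valid puzzle and $1+2+3=0$ in $(\Z/2)^2$. (Alternatively, one colors along a spanning tree rooted at the base point and checks each non-tree edge via the fact that its fundamental cycle is a sum of face boundaries.) Crossing any edge changes the color by a nonzero element, so $\mathscr{C}(P)$ is proper, and it colors the base point $0$; thus $\mathscr{C}(P)\in\mc{C}_n$.

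The two composites are then immediate. Along a path $v_0,\dots,v_k=x$ one gets the telescoping sum $\mathscr{C}(\mathscr{P}(C))(x)=\sum_{i=1}^{k}\bigl(c(v_{i-1})+c(v_i)\bigr)=c(v_0)+c(x)=c(x)$, so $\mathscr{C}\circ\mathscr{P}=\mathrm{id}$; and for an edge $e=vv'$ the defining property gives $\mathscr{C}(P)(v')=\mathscr{C}(P)(v)+P(e)$, whence $\mathscr{P}(\mathscr{C}(P))(e)=\mathscr{C}(P)(v)+\mathscr{C}(P)(v')=P(e)$, so $\mathscr{P}\circ\mathscr{C}=\mathrm{id}$. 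For the boundary statement, the colors that $\mathscr{C}(P)$ assigns to the boundary vertices are obtained by summing boundary edge labels along the boundary from the base point, hence depend only on the boundary conditions $\bs{\lambda}$ of $P$; this is exactly the coloring $\colr(\bs{\lambda})$, and restricting the two inverse identities to boundary data shows that $\mathscr{P}$ carries $\mc{C}_n(\colr(\bs{\lambda}))$ bijectively onto $\mc{P}_n(\bs{\lambda})$.

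The only genuinely delicate step is the well-definedness of $\mathscr{C}$, i.e.\ promoting face-level consistency (which is where the puzzle condition, via $1+2+3=0$, is used) to consistency around arbitrary cycles; here I would invoke the standard fact that the bounded-face boundaries of a connected planar graph generate its cycle space. Everything else is a finite computation in $(\Z/2)^2$ or a telescoping sum.
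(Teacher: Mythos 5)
Your proposal is correct and, at its core, carries out the same construction as the paper: propagate a vertex coloring from the base point using the edge labels, and check consistency at the level of faces. What you add is the observation that the local rule is just addition in the Klein four-group $(\Z/2)^2$ under the identification $0,1,2,3\leftrightarrow(0,0),(0,1),(1,0),(1,1)$. This buys you several things at once. First, the paper's three-way case rule for $\mathscr{C}$ (if the neighbor is colored $0$ use $P(e)$, if it is colored $P(e)$ use $0$, otherwise use the third color) collapses into the single formula $\mathscr{C}(P)(v')=\mathscr{C}(P)(v)+P(e)$, and the fact that $\mathscr{P}(C)$ labels each face properly becomes the one-line computation that three nonzero elements of $(\Z/2)^2$ summing to zero are pairwise distinct. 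Second, and more substantively, you make explicit the step the paper treats as routine: the paper checks consistency of $\mathscr{C}$ around each face ``by inspection'' and then declares $\mathscr{C}$ well-defined, implicitly relying on the fact that for $\Delta_n$ face consistency implies global consistency. You supply the missing justification by observing that the bounded-face boundaries of a connected planar graph generate its cycle space, so it is enough that the $P$-labels around each face sum to $0$, which is exactly $1+2+3=0$ in $(\Z/2)^2$. Third, the two composite identities $\mathscr{C}\circ\mathscr{P}=\mathrm{id}$ and $\mathscr{P}\circ\mathscr{C}=\mathrm{id}$ become telescoping sums rather than case checks. So while the route is not fundamentally different, your reformulation is cleaner, more systematic, and more rigorous about the global well-definedness of $\mathscr{C}$ than the paper's proof; it would be a reasonable substitute.
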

\begin{proof}
We first show that $\mathscr{P}(C)\in\mc{P}_n$. Indeed, if two edges $vv'$ and $vv''$ bound the same face of $\Delta_n$ and are assigned the same label by $\mathscr{P}(C)$, we must have $C(v')=C(v'')$, contradicting the properness of $C$.

We now define a map $\mathscr{C}:\mc{P}_n\rightarrow\mc{C}_n$ as follows. Given $P\in\mc{P}_n$, color the base point of $\Delta_n$ by $0$ and determine the color of any other vertex $v$ by choosing a neighboring vertex $v'$ which has already been colored and letting $\mathscr{C}(P)(v)=P(e)$ if $\mathscr{C}(P)(v')=0$, letting $\mathscr{C}(P)(v)=0$ if $\mathscr{C}(P)(v')=P(e)$, and otherwise letting $\mathscr{C}(P)(v)$ be the unique element of $\{1,2,3\}\setminus\{\mathscr{C}(P)(v'),P(e)\}$. By inspecting all the configurations of a face, one easily checks that the color assigned to a vertex is the same no matter which way around the triangle the process proceeds. Thus $\mathscr{C}$ is well-defined. By construction, we see that $\mathscr{P}$ and $\mathscr{C}$ are mutually inverse bijections.
\end{proof}
\begin{figure}[h!]
\centering
\raisebox{-0.5\height}{
\begin{tikzpicture}[scale=0.95]
\draw[thin,red!80](0,0)--(4,0)--(2.00000000000000,3.46400000000000)--(0,0);
\draw[thin,red!80](0.500000000000000,0.866000000000000)--(3.50000000000000,0.866000000000000);
\draw[thin,red!80](1,0)--(2.50000000000000,2.59800000000000);
\draw[thin,red!80](0.500000000000000,0.866000000000000)--(1,0);
\draw[thin,red!80](1.00000000000000,1.73200000000000)--(3.00000000000000,1.73200000000000);
\draw[thin,red!80](2,0)--(3.00000000000000,1.73200000000000);
\draw[thin,red!80](1.00000000000000,1.73200000000000)--(2,0);
\draw[thin,red!80](1.50000000000000,2.59800000000000)--(2.50000000000000,2.59800000000000);
\draw[thin,red!80](3,0)--(3.50000000000000,0.866000000000000);
\draw[thin,red!80](1.50000000000000,2.59800000000000)--(3,0);
\node at (0.000000000000000,0.000000000000000) {$0$};
\node at (1.00000000000000,0.000000000000000) {$2$};
\node at (2.00000000000000,0.000000000000000) {$1$};
\node at (3.00000000000000,0.000000000000000) {$3$};
\node at (4.00000000000000,0.000000000000000) {$2$};
\node at (0.500000000000000,0.866000000000000) {$1$};
\node at (1.50000000000000,0.866000000000000) {$0$};
\node at (2.50000000000000,0.866000000000000) {$2$};
\node at (3.50000000000000,0.866000000000000) {$0$};
\node at (1.00000000000000,1.73200000000000) {$3$};
\node at (2.00000000000000,1.73200000000000) {$1$};
\node at (3.00000000000000,1.73200000000000) {$3$};
\node at (1.50000000000000,2.59800000000000) {$2$};
\node at (2.50000000000000,2.59800000000000) {$0$};
\node at (2.00000000000000,3.46400000000000) {$1$};
\end{tikzpicture}}
\qquad $\xmapsto{\mathscr{P}}$ \qquad
\raisebox{-0.5\height}{
\begin{tikzpicture}[scale=0.95]
\draw[thin,blue!80](0,0)--(4,0)--(2.00000000000000,3.46400000000000)--(0,0);
\draw[thin,blue!80](0.500000000000000,0.866000000000000)--(3.50000000000000,0.866000000000000);
\draw[thin,blue!80](1,0)--(2.50000000000000,2.59800000000000);
\draw[thin,blue!80](0.500000000000000,0.866000000000000)--(1,0);
\draw[thin,blue!80](1.00000000000000,1.73200000000000)--(3.00000000000000,1.73200000000000);
\draw[thin,blue!80](2,0)--(3.00000000000000,1.73200000000000);
\draw[thin,blue!80](1.00000000000000,1.73200000000000)--(2,0);
\draw[thin,blue!80](1.50000000000000,2.59800000000000)--(2.50000000000000,2.59800000000000);
\draw[thin,blue!80](3,0)--(3.50000000000000,0.866000000000000);
\draw[thin,blue!80](1.50000000000000,2.59800000000000)--(3,0);

\node at (0.250000000000000,0.433000000000000) {$1$};
\node at (2.25000000000000,3.03100000000000) {$1$};
\node at (0.500000000000000,0.000000000000000) {$2$};
\node at (0.750000000000000,1.29900000000000) {$2$};
\node at (2.75000000000000,2.16500000000000) {$3$};
\node at (1.50000000000000,0.000000000000000) {$3$};
\node at (1.25000000000000,2.16500000000000) {$1$};
\node at (3.25000000000000,1.29900000000000) {$3$};
\node at (2.50000000000000,0.000000000000000) {$2$};
\node at (1.75000000000000,3.03100000000000) {$3$};
\node at (3.75000000000000,0.433000000000000) {$2$};
\node at (3.50000000000000,0.000000000000000) {$1$};
\node at (1.25000000000000,0.433000000000000) {$2$};
\node at (1.75000000000000,2.16500000000000) {$3$};
\node at (1.00000000000000,0.866000000000000) {$1$};
\node at (1.75000000000000,1.29900000000000) {$1$};
\node at (2.25000000000000,1.29900000000000) {$3$};
\node at (2.00000000000000,0.866000000000000) {$2$};
\node at (2.25000000000000,2.16500000000000) {$1$};
\node at (2.75000000000000,0.433000000000000) {$1$};
\node at (3.00000000000000,0.866000000000000) {$2$};
\node at (2.25000000000000,0.433000000000000) {$3$};
\node at (1.25000000000000,1.29900000000000) {$3$};
\node at (1.50000000000000,1.73200000000000) {$2$};
\node at (2.75000000000000,1.29900000000000) {$1$};
\node at (1.75000000000000,0.433000000000000) {$1$};
\node at (2.50000000000000,1.73200000000000) {$2$};
\node at (3.25000000000000,0.433000000000000) {$3$};
\node at (0.750000000000000,0.433000000000000) {$3$};
\node at (2.00000000000000,2.59800000000000) {$2$};
\end{tikzpicture}}
\caption{The bijection $\mathscr{P}$ from \Cref{thm:counting-m3}.}
\label{fig:phi-example}
\end{figure}

\subsection{$\T_{4,n}$ and the square grid}
\label{sec:squares}

Let $\square_n$ be the square grid graph with side length $n$, having $(n+1)^2$ vertices and $n^2$ faces. For $k<n$, we view $\square_k$ as a southwest-justified subgraph of $\square_n$, and we fix the southwest corner as the basepoint.

Let $\mc{D}_n$ be the set of edge labelings of $\square_n$ with labels $\{1,2,3,4\}$ that satisfy the following conditions:
\begin{itemize}
\item for each of the $n$ faces on the main southwest-to-northeast diagonal, all four edges are labeled differently;
\item for each of the off-diagonal faces, the four edges are labeled with exactly two distinct labels so that either one label is assigned to the west and south boundaries (with the other label assigned to the north and east boundaries), or one label is assigned to the north and south boundaries (with the other label assigned to the west and east boundaries).
\end{itemize}

We now define a map $\mathscr{D}'$ from $\mathcal{D}_n$ to certain triangular arrays of integers.

\begin{defin}
\label{def:D-to-T4}
Given an edge labeling $D\in\mc{D}_n$, let $\mathscr{D}'(D)$ be the triangular array of integers whose $k$-th row is obtained by reading the $D$-labels around the boundary of $\square_k$ in the clockwise direction, starting from the base point. See \Cref{fig:phi-example-m4} for an example.
\end{defin}

\begin{figure}[h!]
\centering
\raisebox{-.5\height}{
\begin{tikzpicture}[scale=0.8]
\draw[thin,blue!80] (0,0)--(4,0);
\draw[thin,blue!80] (0,1)--(4,1);
\draw[thin,blue!80] (0,2)--(4,2);
\draw[thin,blue!80] (0,3)--(4,3);
\draw[thin,blue!80] (0,4)--(4,4);
\draw[thin,blue!80] (0,0)--(0,4);
\draw[thin,blue!80] (1,0)--(1,4);
\draw[thin,blue!80] (2,0)--(2,4);
\draw[thin,blue!80] (3,0)--(3,4);
\draw[thin,blue!80] (4,0)--(4,4);
\filldraw[blue!80,opacity=0.2] (0,0)--(0,1)--(1,1)--(1,0);
\filldraw[blue!80,opacity=0.2] (1,1)--(1,2)--(2,2)--(2,1);
\filldraw[blue!80,opacity=0.2] (2,2)--(2,3)--(3,3)--(3,2);
\filldraw[blue!80,opacity=0.2] (3,3)--(3,4)--(4,4)--(4,3);
\draw[thin,blue!80](0.500000000000000,1)--(0.500000000000000,2);
\draw[thin,blue!80](0,1.50000000000000)--(1,1.50000000000000);
\draw[thin,blue!80](0.500000000000000,2)--(0.500000000000000,3);
\draw[thin,blue!80](0,2.50000000000000)--(1,2.50000000000000);
\draw[thin,blue!80](0.500000000000000,3.20000000000000)arc(0:90:0.300000000000000);
\draw[thin,blue!80](0.500000000000000,3.80000000000000)arc(180:270:0.300000000000000);
\draw[thin,blue!80](0.500000000000000,3)--(0.500000000000000,3.20000000000000);
\draw[thin,blue!80](0.200000000000000,3.50000000000000)--(0,3.50000000000000);
\draw[thin,blue!80](0.500000000000000,4)--(0.500000000000000,3.80000000000000);
\draw[thin,blue!80](0.800000000000000,3.50000000000000)--(1,3.50000000000000);
\draw[thin,blue!80](1.50000000000000,0)--(1.50000000000000,1);
\draw[thin,blue!80](1,0.500000000000000)--(2,0.500000000000000);
\draw[thin,blue!80](1.50000000000000,2)--(1.50000000000000,3);
\draw[thin,blue!80](1,2.50000000000000)--(2,2.50000000000000);
\draw[thin,blue!80](1.50000000000000,3.20000000000000)arc(0:90:0.300000000000000);
\draw[thin,blue!80](1.50000000000000,3.80000000000000)arc(180:270:0.300000000000000);
\draw[thin,blue!80](1.50000000000000,3)--(1.50000000000000,3.20000000000000);
\draw[thin,blue!80](1.20000000000000,3.50000000000000)--(1,3.50000000000000);
\draw[thin,blue!80](1.50000000000000,4)--(1.50000000000000,3.80000000000000);
\draw[thin,blue!80](1.80000000000000,3.50000000000000)--(2,3.50000000000000);
\draw[thin,blue!80](2.50000000000000,0)--(2.50000000000000,1);
\draw[thin,blue!80](2,0.500000000000000)--(3,0.500000000000000);
\draw[thin,blue!80](2.50000000000000,1)--(2.50000000000000,2);
\draw[thin,blue!80](2,1.50000000000000)--(3,1.50000000000000);
\draw[thin,blue!80](2.50000000000000,3.20000000000000)arc(0:90:0.300000000000000);
\draw[thin,blue!80](2.50000000000000,3.80000000000000)arc(180:270:0.300000000000000);
\draw[thin,blue!80](2.50000000000000,3)--(2.50000000000000,3.20000000000000);
\draw[thin,blue!80](2.20000000000000,3.50000000000000)--(2,3.50000000000000);
\draw[thin,blue!80](2.50000000000000,4)--(2.50000000000000,3.80000000000000);
\draw[thin,blue!80](2.80000000000000,3.50000000000000)--(3,3.50000000000000);
\draw[thin,blue!80](3.50000000000000,0.200000000000000)arc(0:90:0.300000000000000);
\draw[thin,blue!80](3.50000000000000,0.800000000000000)arc(180:270:0.300000000000000);
\draw[thin,blue!80](3.50000000000000,0)--(3.50000000000000,0.200000000000000);
\draw[thin,blue!80](3.20000000000000,0.500000000000000)--(3,0.500000000000000);
\draw[thin,blue!80](3.50000000000000,1)--(3.50000000000000,0.800000000000000);
\draw[thin,blue!80](3.80000000000000,0.500000000000000)--(4,0.500000000000000);
\draw[thin,blue!80](3.50000000000000,1)--(3.50000000000000,2);
\draw[thin,blue!80](3,1.50000000000000)--(4,1.50000000000000);
\draw[thin,blue!80](3.50000000000000,2)--(3.50000000000000,3);
\draw[thin,blue!80](3,2.50000000000000)--(4,2.50000000000000);
\node at (0.5,0) {$1$};
\node at (1.5,0) {$3$};
\node at (2.5,0) {$1$};
\node at (3.5,0) {$4$};
\node at (0.5,1) {$3$};
\node at (1.5,1) {$3$};
\node at (2.5,1) {$1$};
\node at (3.5,1) {$1$};
\node at (0.5,2) {$3$};
\node at (1.5,2) {$2$};
\node at (2.5,2) {$1$};
\node at (3.5,2) {$1$};
\node at (0.5,3) {$3$};
\node at (1.5,3) {$2$};
\node at (2.5,3) {$3$};
\node at (3.5,3) {$1$};
\node at (0.5,4) {$2$};
\node at (1.5,4) {$3$};
\node at (2.5,4) {$4$};
\node at (3.5,4) {$2$};
\node at (0,0.5) {$2$};
\node at (0,1.5) {$1$};
\node at (0,2.5) {$4$};
\node at (0,3.5) {$3$};
\node at (1,0.5) {$4$};
\node at (1,1.5) {$1$};
\node at (1,2.5) {$4$};
\node at (1,3.5) {$2$};
\node at (2,0.5) {$4$};
\node at (2,1.5) {$4$};
\node at (2,2.5) {$4$};
\node at (2,3.5) {$3$};
\node at (3,0.5) {$4$};
\node at (3,1.5) {$4$};
\node at (3,2.5) {$2$};
\node at (3,3.5) {$4$};
\node at (4,0.5) {$1$};
\node at (4,1.5) {$4$};
\node at (4,2.5) {$2$};
\node at (4,3.5) {$3$};
\node at (0,0) {$\bullet$};
\end{tikzpicture}}
\quad $\xmapsto{\mathscr{D}'}$ \quad
\raisebox{-.6\height}{
\begin{tikzpicture}[scale=0.52]
\def\sep{1.0};
\draw[green!80,thin](0.0,0.0)--(3.0,0.0);
\draw[green!80,thin](3.0,0)--(1.5,-2.598);
\draw[green!80,thin](0.0,0)--(1.5,-2.598);
\draw[green!80,thin](0.5,-0.866)--(2.5,-0.866);
\draw[green!80,thin](2.0,0)--(1.0,-1.7319999999999998);
\draw[green!80,thin](1.0,0)--(2.0,-1.7319999999999998);
\draw[green!80,thin](1.0,-1.732)--(2.0,-1.732);
\draw[green!80,thin](1.0,0)--(0.5,-0.8659999999999999);
\draw[green!80,thin](2.0,0)--(2.5,-0.8659999999999999);
\draw[green!80,thin](1.5,-2.598)--(1.5,-2.598);
\draw[green!80,thin](0.0,0)--(0.0,0.0);
\draw[green!80,thin](3.0,0)--(3.0,0.0);
\draw[green!80,thin](4.0,0.0)--(7.0,0.0);
\draw[green!80,thin](7.0,0)--(5.5,-2.598);
\draw[green!80,thin](4.0,0)--(5.5,-2.598);
\draw[green!80,thin](4.5,-0.866)--(6.5,-0.866);
\draw[green!80,thin](6.0,0)--(5.0,-1.7319999999999998);
\draw[green!80,thin](5.0,0)--(6.0,-1.7319999999999998);
\draw[green!80,thin](5.0,-1.732)--(6.0,-1.732);
\draw[green!80,thin](5.0,0)--(4.5,-0.8659999999999999);
\draw[green!80,thin](6.0,0)--(6.5,-0.8659999999999999);
\draw[green!80,thin](5.5,-2.598)--(5.5,-2.598);
\draw[green!80,thin](4.0,0)--(4.0,0.0);
\draw[green!80,thin](7.0,0)--(7.0,0.0);
\draw[green!80,thin](8.0,0.0)--(11.0,0.0);
\draw[green!80,thin](11.0,0)--(9.5,-2.598);
\draw[green!80,thin](8.0,0)--(9.5,-2.598);
\draw[green!80,thin](8.5,-0.866)--(10.5,-0.866);
\draw[green!80,thin](10.0,0)--(9.0,-1.7319999999999998);
\draw[green!80,thin](9.0,0)--(10.0,-1.7319999999999998);
\draw[green!80,thin](9.0,-1.732)--(10.0,-1.732);
\draw[green!80,thin](9.0,0)--(8.5,-0.8659999999999999);
\draw[green!80,thin](10.0,0)--(10.5,-0.8659999999999999);
\draw[green!80,thin](9.5,-2.598)--(9.5,-2.598);
\draw[green!80,thin](8.0,0)--(8.0,0.0);
\draw[green!80,thin](11.0,0)--(11.0,0.0);
\draw[green!80,thin](12.0,0.0)--(15.0,0.0);
\draw[green!80,thin](15.0,0)--(13.5,-2.598);
\draw[green!80,thin](12.0,0)--(13.5,-2.598);
\draw[green!80,thin](12.5,-0.866)--(14.5,-0.866);
\draw[green!80,thin](14.0,0)--(13.0,-1.7319999999999998);
\draw[green!80,thin](13.0,0)--(14.0,-1.7319999999999998);
\draw[green!80,thin](13.0,-1.732)--(14.0,-1.732);
\draw[green!80,thin](13.0,0)--(12.5,-0.8659999999999999);
\draw[green!80,thin](14.0,0)--(14.5,-0.8659999999999999);
\draw[green!80,thin](13.5,-2.598)--(13.5,-2.598);
\draw[green!80,thin](12.0,0)--(12.0,0.0);
\draw[green!80,thin](15.0,0)--(15.0,0.0);
\node at (0.0,-0.0) {$2$};
\node at (1.0,-0.0) {$1$};
\node at (2.0,-0.0) {$4$};
\node at (3.0,-0.0) {$3$};
\node at (0.5,-0.866) {$2$};
\node at (1.5,-0.866) {$1$};
\node at (2.5,-0.866) {$4$};
\node at (1.0,-1.732) {$2$};
\node at (2.0,-1.732) {$1$};
\node at (1.5,-2.598) {$2$};
\node at (4.0,-0.0) {$2$};
\node at (5.0,-0.0) {$3$};
\node at (6.0,-0.0) {$4$};
\node at (7.0,-0.0) {$2$};
\node at (4.5,-0.866) {$3$};
\node at (5.5,-0.866) {$2$};
\node at (6.5,-0.866) {$3$};
\node at (5.0,-1.732) {$3$};
\node at (6.0,-1.732) {$2$};
\node at (5.5,-2.598) {$3$};
\node at (8.0,-0.0) {$3$};
\node at (9.0,-0.0) {$2$};
\node at (10.0,-0.0) {$4$};
\node at (11.0,-0.0) {$1$};
\node at (8.5,-0.866) {$2$};
\node at (9.5,-0.866) {$4$};
\node at (10.5,-0.866) {$4$};
\node at (9.0,-1.732) {$4$};
\node at (10.0,-1.732) {$4$};
\node at (9.5,-2.598) {$4$};
\node at (12.0,-0.0) {$4$};
\node at (13.0,-0.0) {$1$};
\node at (14.0,-0.0) {$3$};
\node at (15.0,-0.0) {$1$};
\node at (12.5,-0.866) {$1$};
\node at (13.5,-0.866) {$3$};
\node at (14.5,-0.866) {$1$};
\node at (13.0,-1.732) {$3$};
\node at (14.0,-1.732) {$1$};
\node at (13.5,-2.598) {$1$};
\node at (10,-4.2) {};
\end{tikzpicture}}
\caption{The bijection $\mathscr{D}'$ from \Cref{thm:counting-m4}.}
\label{fig:phi-example-m4}
\end{figure}

\begin{theorem}\label{thm:counting-m4}
The map $\mathscr{D}'$ is a bijection $\mc{D}_n\rightarrow\T_{4,n}$.
\end{theorem}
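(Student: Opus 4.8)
The plan is to follow the template of the rank-$3$ argument (\Cref{lem:one-row} and \Cref{thm:psi-is-bijection}), with the one-row ``fans'' of triangles replaced by the L-shaped strips of square faces obtained when passing from $\square_{k-1}$ to $\square_k$: such a strip consists of the corner face $(k,k)$, which is diagonal, together with two arms of $k-1$ off-diagonal faces each. The argument then has three parts.

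\emph{Step 1: $\mathscr{D}'(D)$ lies in $\T_{4,n}$.} Fix $D\in\mc{D}_n$ and put $T=\mathscr{D}'(D)$. First I would verify \Cref{def:interlacing-triangle}(a), that each label occurs exactly $k$ times in row $k$; since off-diagonal faces do not contribute one edge of each label, the telescoping cancellation used in the rank-$3$ case is unavailable, so I would argue by induction on $k$, using the rigid two-label form of the off-diagonal faces to track how label multiplicities change as the strip is adjoined. Next I would verify the interlacing condition \Cref{def:interlacing-triangle}(b). As the four triangles of $T$ occupy disjoint horizontal bands, this reduces to interlacing within each of the four sides of $\square_k$ together with a nearly trivial inequality at each of the three clockwise corner junctions (NW, NE, SE). Imitating the diagram chase preceding \Cref{thm:psi-is-bijection}, I would suppose a minimal failure --- two consecutive equal labels $a$ along some side of $\square_k$ with no interlacing $a$ among the relevant edges of $\square_{k-1}$ --- and propagate forced labels across the faces of the strip, now keeping track of which off-diagonal faces have ``$W{=}S,\ N{=}E$'' type and which have ``$N{=}S,\ W{=}E$'' type, until an invalidly labeled face is forced, a contradiction.

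\emph{Step 2: the inverse map $\mathscr{D}$.} I would build $\mathscr{D}\colon\T_{4,n}\to\mc{D}_n$ by induction on $n$: apply $\mathscr{D}$ to the height-$(n-1)$ subarray of $T$ to label $\square_{n-1}\subset\square_n$, then fill the remaining L-strip. The engine is a \emph{one-strip lemma}, the square-grid analogue of \Cref{lem:one-row}: if the inner boundary (the two relevant arms of $\partial\square_{k-1}$) and the outer boundary ($\partial\square_k\setminus\partial\square_{k-1}$) are the edge data read from two interlacing consecutive rows of an array, then there is a unique labeling of the strip extending this data and lying in $\mc{D}_k$. Uniqueness is immediate: starting from the known west edge of the first top-arm face, each internal edge of the top arm is forced in turn, since the already-known boundary edges of a face determine its type and hence its remaining edge; symmetrically one sweeps the right arm upward from the known south edge of its first face; and the corner face $(k,k)$ is then completely determined. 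For existence I would peel the first face off each arm and check, as in the proof of \Cref{lem:one-row}, that the reduced boundary data still interlaces, inducting down to the single corner face, which must then receive four distinct labels.

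\emph{Step 3: conclusion and the main obstacle.} By construction $\mathscr{D}'\circ\mathscr{D}=\mathrm{id}_{\T_{4,n}}$, since row $k$ of $T$ is what one reads around $\partial\square_k$ in $\mathscr{D}(T)$. Conversely, for $D\in\mc{D}_n$ the restriction of $D$ to each L-strip lies in $\mc{D}_k$ and realizes the boundary data determined by $\mathscr{D}'(D)$, so by the uniqueness in the one-strip lemma it is exactly the fill selected by $\mathscr{D}$; hence $\mathscr{D}\circ\mathscr{D}'=\mathrm{id}_{\mc{D}_n}$, and $\mathscr{D}'$ is a bijection. The hard part will be the existence half of the one-strip lemma: one must show the \emph{forced} fill never creates a face outside $\mc{D}_k$, and in particular that the two independent propagations along the two arms arrive at the corner $(k,k)$ with four distinct labels. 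This is noticeably heavier than the rank-$3$ bookkeeping, since each off-diagonal face now admits two admissible patterns rather than one and the arms interact only through the corner, so the inductive peeling must be arranged so that the interlacing hypothesis survives at both ends simultaneously. Establishing \Cref{def:interlacing-triangle}(a) for $\mathscr{D}'(D)$ is a secondary obstacle, for the same reason (loss of the per-face cancellation available in the triangular case).
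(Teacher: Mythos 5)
Your high-level plan (strip-by-strip filling, with a uniqueness-plus-existence lemma for the hook $H_k=\square_k\setminus\square_{k-1}$) is structurally the same as the paper's, and you correctly identify the hard part. But that hard part is a genuine gap: the existence half of your ``one-strip lemma'' is not actually argued, only flagged, and your proposed peeling strategy is not obviously viable. Peeling the endmost face off each arm changes the boundary data to include the two freshly exposed internal edges, and you would need to show the new data still interlaces in a suitable sense and that the two peeling fronts arrive consistently at the corner face --- exactly the interaction you note is unresolved. Without that, surjectivity of $\mathscr{D}'$ is not established.

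The paper's proof takes a different and more robust route. It introduces a \emph{strand diagram}: in each off-diagonal face the two equal-labeled pairs of edges are joined, forming strands running from boundary edges of $\square_n$ to edges of diagonal faces. This single device handles both directions at once. For the forward direction, condition (a) is immediate because the $k$ diagonal faces of $\square_k$ have one edge of each label, and (b) follows by tracking where two same-labeled boundary strands can terminate (if neither meets $\partial\square_{n-1}$ they would both end at the NE corner face, which cannot have a repeated label). For the inverse, the paper does not peel: it sweeps the horizontal arm left-to-right (then the vertical arm bottom-to-top), and at each face with known west $a$, north $b$, south $d$ it uses the partial strand diagram already built together with the interlacing of rows $k-1$ and $k$ of $T$ to derive $a\neq b$ and $d\in\{a,b\}$, forcing an admissible east label; the corner face then gets four distinct labels because strands inject inner-boundary labels into outer-boundary labels and row $k$ has one more copy of each label than row $k-1$. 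Your Step 1 multiplicity-tracking and rank-$3$-style diagram chase could in principle be made to work, but the missing existence argument in Step 2 is what the strand diagram supplies and your proposal does not.
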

\begin{proof}
Given $D\in\mc{D}_n$, we first show that $\mathscr{D}'(D)$ is an interlacing triangular array. We augment $D$ with a \emph{strand diagram} (these look similar to the \emph{pipe dreams} of \cite{bergeron-billey,knutson-miller} that might be familiar to some readers). For each face off the main diagonal, we connect the edges with the same labels, forming strands connecting the boundary edges of $\square_n$ to the edges on the main diagonal (see \Cref{fig:phi-example-m4}). By the definition of $\mc{D}_n$, strands with the same label do not intersect. For each $k=1, \ldots, n$, each of the $k$ faces of $\square_k$ on the main diagonal has one edge labeled $j$, for each $j\in[4]$. Thus the boundary of $\square_k$ has a total of $k$ edges labeled by $j$ and so the $k$-th row of $\mathscr{D}'(D)$ contains exactly $k$ copies of $j$. 

We now check the interlacing condition. Consider two $j$'s on the boundary of $\square_n$, and keep track of their corresponding non-intersecting strands (the strand may have length $0$ if one of these $j$'s bounds a face on the main diagonal). We call these two strands $j_1$ and $j_2$ in clockwise order from the base point. If both strands lie on the northwest side of the main diagonal, then strand $j_1$ must pass through the boundary of $\square_{n-1}$. This intersection point corresponds to a $j$ in row $n-1$ of $\mathscr{D}'(D)$ interlacing the two $j$'s in row $n$ that we started with. The case where both strands lie on the southeast side of the main diagonal is similar. Therefore we can now assume that $j_1$ lies on the northwest side and $j_2$ on the southeast side of the main diagonal. If either $j_1$ or $j_2$ intersects the boundary of $\square_{n-1}$, we find an interlacing $j$ by locating the intersection, as above. If $j_1$ does not intersect the boundary of $\square_{n-1}$, then it must terminate at the face $F$ in the northeastern corner of $\square_n$. The same is true for $j_2$. But $F$ lies on the main diagonal and thus cannot have two edges with the same label, a contradiction. We conclude that $\mathscr{D}'(D)$ is interlacing. 

Next, we construct a map $\mathscr{D}:\T_{4,n}\rightarrow\mc{D}_n$ inverse to $\mathscr{D}'$. Let $T\in\T_{4,n}$ and define $D=\mathscr{D}(T)$ by first labeling the boundary of $\square_k$ clockwise according to the $k$-th row of $T$, for $k=1,\ldots,n$. We now assign labels to other edges of $\square_n$; it suffices to consider the \emph{hook} $H_n:=\square_n\setminus\square_{n-1}$, shown in \Cref{fig:phi-hook}. It contains a horizontal leg on top and a vertical leg on the right whose intersection is the face $F$ in the northeast corner of $\square_n$. The hook $H_n$ has an outer boundary given by the boundary of $\square_n$, and an inner boundary given by the boundary of $\square_{n-1}$. There are $2n+2$ edges on the outer boundary and $2n-2$ edges on the inner boundary. 
\begin{figure}[h!]
\centering
\begin{tikzpicture}[scale=0.8]
\draw[thin,blue!80](0,3)--(0,4)--(4,4)--(4,0)--(3,0)--(3,3)--(0,3);
\draw[thin,blue!80](1,3)--(1,4);
\draw[thin,blue!80](2,3)--(2,4);
\draw[thin,blue!80](4,3)--(3,3)--(3,4);
\draw[thin,blue!80](3,1)--(4,1);
\draw[thin,blue!80](3,2)--(4,2);
\filldraw[blue!80,opacity=0.2] (3,3)--(3,4)--(4,4)--(4,3);
\filldraw[blue!80,opacity=0.4] (0.4,3.4)--(0.4,3.6)--(2.4,3.6)--(2.4,3.7)--(2.7,3.5)--(2.4,3.3)--(2.4,3.4)--(0.4,3.4);
\filldraw[blue!80,opacity=0.4] (3.4,0.4)--(3.6,0.4)--(3.6,2.4)--(3.7,2.4)--(3.5,2.7)--(3.3,2.4)--(3.4,2.4)--(3.4,0.4);

\node at (0,3.5) {$3$};
\node at (0.5,4) {$2$};
\node at (1.5,4) {$3$};
\node at (2.5,4) {$4$};
\node at (3.5,4) {$2$};
\node at (0.5,3) {$3$};
\node at (1.5,3) {$2$};
\node at (2.5,3) {$3$};
\node at (4,3.5) {$3$};
\node at (4,2.5) {$2$};
\node at (4,1.5) {$4$};
\node at (4,0.5) {$1$};
\node at (3.5,0) {$4$};
\node at (3,2.5) {$2$};
\node at (3,1.5) {$4$};
\node at (3,0.5) {$4$};
\end{tikzpicture}
\caption{A labeling of $H_n$ in the construction of $\mathscr{D}$ in \Cref{thm:counting-m4}.}
\label{fig:phi-hook}
\end{figure}
We label the interior edges of the horizontal leg from left to right. 

Suppose that we have moved to a face $F'$ whose western, northern, and southern boundaries are already labeled $a,b,d$ respectively. We need to specify the label $c$ of its eastern boundary. It is clear that there is at most one choice for $c$ compatible with the conditions for $\mc{D}_n$. We will show that there is in fact a compatible choice for $c$. Connect the strands as in \Cref{fig:phi-example-m4} in the squares of $H_n$ to the left of $F'$. If $a=b=j$, there is a strand labeled $j$ starting from the northern boundary of $F'$, continuing to the western boundary of $F'$, and terminating at an edge $e$ on the outer boundary of $H_n$. By the conditions of $\mc{D}_n$ for squares to the left of $F'$, we see that there are no other $j$'s in row $n-1$ of $T$ between the entries that correspond to $e$ and to $b$ on row $n$ of $T$, contradicting the interlacing condition of $T$. Therefore $a\neq b$. Now, if $d$ is also different from both $a$ and $b$, then in the interlacing triangular array $T$, strictly to the left of the entry in row $n-1$ that corresponds to $d$, the number of appearances of this label in row $n-1$ equals the number of appearances in row $n$, because the strands in $H_n$ so far provide a bijection. However this is already a contradiction of the interlacing condition, because this label would occur more times in row $n-1$ than in row $n$, weakly to the left of the corresponding entry for the south boundary of $F'$. Thus $a\neq b$ and $d\in\{a,b\}$, so $c$ is uniquely determined.

The vertical leg can similarly be labeled from bottom to top. The strands provide an injection from the multi-set of labels on the inner boundary of $H_n$ to those of the outer boundary. The labels of the four remaining edges on the outer boundary also biject onto the labels of $F$. Since there is precisely one more $j$ on row $n$ of $T$ than on row $n-1$, we conclude that the four sides of $F$ have distinct labels. Hence $\mathscr{D}$ is well defined and by construction is inverse to $\mathscr{D}'$.
\end{proof}

\begin{remark}
By enumerating the labelings $\mc{D}_n$, we have computed\footnote{Code is available at \url{http://bicmr.pku.edu.cn/~gaoyibo/assets/interlacing.sage}.} for $n=0,1,2,3$ that $|\T_{4,n}|=1, \: 24, \: 1344,\: 191232$. This last value disagrees with the quantity $\frac{1}{5} |\{\text{proper vertex $5$-colorings of } \boxtimes_n\}|$ from \cite[Conjecture~A.5]{Aggarwal-Borodin-Wheeler} which for $n=0,1,2,3$ is equal\footnote{See the OEIS entry A068294.} to $1, \: 24, \: 1344, \: 187008$, disproving the conjecture of Aggarwal--Borodin--Wheeler.

We do, however, make a new conjecture for $|\T_{4,n}|$ that has been checked up to $n=7$. This replaces vertex colorings of $\boxtimes_n$ with edge labelings of $\square_n$ and is a direct rank-$4$ extension of the equinumerosity of $\mc{P}_n$ and $\T_{3,n}$.
\end{remark}

\begin{conj}\label{conj:counting-m4-proper-labeling}
$|\T_{4,n}|$ equals the number of edge labelings of $\square_n$ with four labels such that the four sides of each face have distinct labels. 
\end{conj}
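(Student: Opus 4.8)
We do not resolve \Cref{conj:counting-m4-proper-labeling}, but we sketch the approach we would take and identify the main obstruction. Write $\mc{R}_n$ for the set of edge labelings of $\square_n$ with four labels in which the four edges of every face are labeled differently, so that the conjecture asserts $|\mc{R}_n| = |\T_{4,n}|$; by the bijection $\mathscr{D}'$ of \Cref{thm:counting-m4} this is equivalent to constructing a bijection $\mc{R}_n \leftrightarrow \mc{D}_n$. The plan, in close parallel with the proof of \Cref{thm:counting-m4}, is to build such a bijection by induction on $n$, peeling off the hook $H_n = \square_n \setminus \square_{n-1}$. Both families share the reconstruction property that a labeling is uniquely determined by its restriction to $\square_{n-1}$ together with the labels on $\partial\square_n$, via a forced propagation across the interior edges of the one-face-wide hook --- left to right along the horizontal leg, then bottom to top up the vertical leg. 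For $\mc{R}_n$ the local rule is ``the remaining edge of the current face is the unique missing label, provided the other three edges are distinct'', while for $\mc{D}_n$ it is the two-color rule defining $\mc{D}_n$. Since the hook is one face wide, this propagation is governed by constant-size ($4$-state) transfer matrices $M_{\mc{R}}$ and $M_{\mc{D}}$ read along the hook, which yields a recursion for the refined counts $\rho_n(\tau) := \#\{R \in \mc{R}_n : \text{the top and right sides of } \square_n \text{ carry the labels } \tau\}$ and its $\mc{D}_n$-analogue $\delta_n$, with $\rho_1 = \delta_1$. So far this is routine; the difficulty is to compare $M_{\mc{R}}$ with $M_{\mc{D}}$.

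The hard point is that this plan \emph{fails} as stated, and seeing why isolates the real obstruction. Already at a single hook face whose (fixed) north and south edges carry a common label, the rainbow rule forbids the face while the $\mc{D}_n$ rule permits it (a label running straight through), so $M_{\mc{R}} \neq M_{\mc{D}}$ and therefore $\rho_n \neq \delta_n$ in general: the boundary-label distributions of the two families genuinely differ, even though their totals are conjecturally equal. Consequently no statistic fine enough to control the hook propagation will have matching distributions on $\mc{R}_n$ and $\mc{D}_n$, so a successful proof must either produce a bijection that does \emph{not} respect the filtration $\square_1 \subset \cdots \subset \square_n$, or exhibit a global weighting or involution reconciling the two boundary distributions. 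The only positive evidence that such a reconciliation exists is the local coincidence that a single square has $4! = 24$ rainbow labelings and also exactly $2 \cdot 4 \cdot 3 = 24$ labelings of $\mc{D}_n$-type, together with the numerical check $|\T_{4,n}| = |\mc{R}_n|$ for $n \le 7$.

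I would therefore pursue one of two less structured routes. First, look for an intermediate model --- some analogue for $\square_n$ of the $0/1/10$-puzzles and of the strand diagrams used in \Cref{thm:counting-m4}, through which the bijection could factor, the way the rank-$3$ bijections of \Cref{thm:intro-rank-3-bijections} do; the obstacle here is that, unlike the triangular $1/2/3$-puzzles, a rainbow labeling of $\square_n$ has no transparent ``readout'' to an interlacing array (for instance, the number of edges carrying a fixed label around $\square_k$ need not be $k$), so a genuinely new readout must be devised. Second, attack $|\mc{R}_n|$ directly with a transfer matrix in one coordinate direction of $\square_n$ --- necessarily of dimension exponential in $n$ --- and try to match its relevant spectral data with that of the corresponding $\mc{D}_n$ transfer matrix by constructing an intertwiner, which would prove the equality of counts without a bijection. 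In both approaches the crux, and the reason the conjecture has resisted proof, is the lack of any evident structural bridge between the $S_4$-symmetric rainbow condition and the diagonal-breaking condition defining $\mc{D}_n$; pinning down the right bridging object is the essential missing idea.
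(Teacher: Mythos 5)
The statement you were asked about is \Cref{conj:counting-m4-proper-labeling}, which the paper itself leaves as an \emph{open conjecture}: the authors offer no proof, only the computational verification up to $n=7$ mentioned in the preceding remark. Your submission correctly recognizes this and does not claim a proof, so there is nothing to compare against on the paper's side; what you have written is a (reasonable) analysis of why the naive approach fails. Your main structural observations are sound: both $\mc{R}_n$ and $\mc{D}_n$ have the hook-reconstruction property (the interior edges of $H_n=\square_n\setminus\square_{n-1}$ are forced one face at a time from the inner and outer boundary labels), and the local rules genuinely differ --- an off-diagonal face of a $\mc{D}_n$-labeling may repeat a label across opposite sides, which the rainbow condition forbids --- so no face-by-face or hook-by-hook matching can work directly. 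The coincidence you point to, that each face type admits exactly $24$ valid labelings ($4!$ for a rainbow face, $2\cdot 4\cdot 3$ for an off-diagonal $\mc{D}_n$-face), is indeed the kind of local evidence that makes the conjecture plausible without suggesting a bijection.

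Two small cautions. First, you assert $\rho_n\neq\delta_n$ ``in general'' as a consequence of the local rules differing, but that inference is not automatic: differing transfer matrices can still produce identical marginal distributions, so to claim the refined boundary statistics genuinely disagree you would need to exhibit a discrepancy for some small $n$ (this should be an easy computation and would strengthen your point). Second, your $2\cdot 4\cdot 3$ count applies only to off-diagonal faces; for $n=1$ the unique face is on the diagonal, where the $\mc{D}_1$ rule coincides with the rainbow rule, so the base-case agreement $\rho_1=\delta_1$ is trivial rather than a coincidence. With those caveats, your assessment --- that the missing ingredient is a structural bridge (an intermediate object or readout for rainbow labelings of $\square_n$ analogous to the strand diagrams or $0/1/10$-puzzles) --- is consistent with the paper, which likewise stops at the conjecture.
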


\section{Schubert structure constants}
\label{sec:geometry}
In this section we derive the geometric results stated in \Cref{sec:intro-geometry}.

\subsection{Puzzle conversion}
\label{sec:puzzle-conversion}
A \emph{$0/1/10$-puzzle} is a labeling of the edges of $\Delta_n$ with labels $0$, $1$, and $10$ so that each $\Delta$-oriented face is labeled in one of the following ways

\begin{center}
\begin{tikzpicture}[scale=1, rotate=-60]
    \node[circle, fill=black, scale=0.1] (c0) at ({-2*sqrt(3)/3},2) {};
    \node[circle, fill=black, scale=0.1] (d0) at ({-3*sqrt(3)/3},3) {};
    \node[circle, fill=black, scale=0.1] (d1) at ({-1*sqrt(3)/3},3) {};
    \draw[cyan!90,thin] (c0) -- (d0) node [midway] {\large\textcolor{black}{$0$}};
    \draw[cyan!90,thin] (c0) -- (d1) node [midway] {\large\textcolor{black}{$0$}};
    \draw[cyan!90,thin] (d0) -- (d1) node [midway] {\large\textcolor{black}{$0$}};
\end{tikzpicture} \quad
\begin{tikzpicture}[scale=1, rotate=-60]
    \node[circle, fill=black, scale=0.1] (c0) at ({-2*sqrt(3)/3},2) {};
    \node[circle, fill=black, scale=0.1] (d0) at ({-3*sqrt(3)/3},3) {};
    \node[circle, fill=black, scale=0.1] (d1) at ({-1*sqrt(3)/3},3) {};
    \draw[cyan!90,thin] (c0) -- (d0) node [midway] {\large\textcolor{black}{$1$}};
    \draw[cyan!90,thin] (c0) -- (d1) node [midway] {\large\textcolor{black}{$1$}};
    \draw[cyan!90,thin] (d0) -- (d1) node [midway] {\large\textcolor{black}{$1$}};
\end{tikzpicture} \quad
\begin{tikzpicture}[scale=1, rotate=-60]
    \node[circle, fill=black, scale=0.1] (c0) at ({-2*sqrt(3)/3},2) {};
    \node[circle, fill=black, scale=0.1] (d0) at ({-3*sqrt(3)/3},3) {};
    \node[circle, fill=black, scale=0.1] (d1) at ({-1*sqrt(3)/3},3) {};
    \draw[cyan!90,thin] (c0) -- (d0) node [midway] {\large\textcolor{black}{$0$}};
    \draw[cyan!90,thin] (c0) -- (d1) node [midway] {\large\textcolor{black}{$1$}};
    \draw[cyan!90,thin] (d0) -- (d1) node [midway] {\large\textcolor{black}{$10$}};
\end{tikzpicture} \quad
\begin{tikzpicture}[scale=1, rotate=-60]
    \node[circle, fill=black, scale=0.1] (c0) at ({-2*sqrt(3)/3},2) {};
    \node[circle, fill=black, scale=0.1] (d0) at ({-3*sqrt(3)/3},3) {};
    \node[circle, fill=black, scale=0.1] (d1) at ({-1*sqrt(3)/3},3) {};
    \draw[cyan!90,thin] (c0) -- (d0) node [midway] {\large\textcolor{black}{$1$}};
    \draw[cyan!90,thin] (c0) -- (d1) node [midway] {\large\textcolor{black}{$10$}};
    \draw[cyan!90,thin] (d0) -- (d1) node [midway] {\large\textcolor{black}{$0$}};
\end{tikzpicture} \quad
\begin{tikzpicture}[scale=1, rotate=-60]
    \node[circle, fill=black, scale=0.1] (c0) at ({-2*sqrt(3)/3},2) {};
    \node[circle, fill=black, scale=0.1] (d0) at ({-3*sqrt(3)/3},3) {};
    \node[circle, fill=black, scale=0.1] (d1) at ({-1*sqrt(3)/3},3) {};
    \draw[cyan!90,thin] (c0) -- (d0) node [midway] {\large\textcolor{black}{$10$}};
    \draw[cyan!90,thin] (c0) -- (d1) node [midway] {\large\textcolor{black}{$0$}};
    \draw[cyan!90,thin] (d0) -- (d1) node [midway] {\large\textcolor{black}{$1$}};
\end{tikzpicture} \quad
\begin{tikzpicture}[scale=1, rotate=-60]
    \node[circle, fill=black, scale=0.1] (c0) at ({-2*sqrt(3)/3},2) {};
    \node[circle, fill=black, scale=0.1] (d0) at ({-3*sqrt(3)/3},3) {};
    \node[circle, fill=black, scale=0.1] (d1) at ({-1*sqrt(3)/3},3) {};
    \draw[cyan!90,thin] (c0) -- (d0) node [midway] {\large\textcolor{black}{$10$}};
    \draw[cyan!90,thin] (c0) -- (d1) node [midway] {\large\textcolor{black}{$10$}};
    \draw[cyan!90,thin] (d0) -- (d1) node [midway] {\large\textcolor{black}{$10$}};
\end{tikzpicture},
\end{center}
and so that each $\nabla$-oriented face is labeled by a $180^{\circ}$ rotation of one of these\footnote{See \cite[\S5]{Knutson-Zinn-Justin-3} for the relationship between these puzzles and others which have appeared in the literature.}. The boundary conditions of such a puzzle are the $0,1$-strings $\xi^{(1)},\xi^{(2)},\xi^{(3)}$ obtained by reading the labels of the boundary edges of the three sides of $\Delta_n$ clockwise from the basepoint. We write $\tilde{\mc{P}}_n(\bs{\xi})$ for the set of $0/1/10$-puzzles on $\Delta_n$ with boundary conditions $\bs{\xi}$ and $\tilde{\mc{P}}_n$ for the set of all $0/1/10$-puzzles on $\Delta_n$. 

We are grateful to Allen Knutson for sharing with us the following correspondence between $1/2/3$-puzzles and $0/1/10$-puzzles. Given a boundary condition $\bs{\lambda}=(\lambda^{(1)},\lambda^{(2)},\lambda^{(3)})$, let $\partn(\bs{\lambda})$ be the be the $0,1$-strings $(\xi^{(1)},\xi^{(2)},\xi^{(3)})$ obtained by applying the transformation below:

    \begin{center}
    \begin{tikzpicture}[scale=1]
    \draw[blue!80,thin] (0,0) -- ({sqrt(3)/3},1) node [midway] {\large\textcolor{black}{$2$}};
    \draw[blue!80,thin] (1,0) -- ({1+sqrt(3)/3},1) node [midway] {\large\textcolor{black}{$3$}};
    \draw[blue!80,thin] (2,0) -- ({2+sqrt(3)/3},1) node [midway] {\large\textcolor{black}{$1$}};
    \draw[blue!80,thin] (6.5,0.42) -- ({6.5+2*sqrt(3)/3},.42) node [midway] {\large\textcolor{black}{$3$}};
    \draw[blue!80,thin] (8,.42) -- ({8+2*sqrt(3)/3},.42) node [midway] {\large\textcolor{black}{$1$}};
    \draw[blue!80,thin] (9.5,.42) -- ({9.5+2*sqrt(3)/3},.42) node [midway] {\large\textcolor{black}{$2$}};
    \draw[blue!80,thin] (3.5,1) -- ({3.5+sqrt(3)/3},0) node [midway] {\large\textcolor{black}{$1$}};
    \draw[blue!80,thin] (4.5,1) -- ({4.5+sqrt(3)/3},0) node [midway] {\large\textcolor{black}{$2$}};
    \draw[blue!80,thin] (5.5,1) -- ({5.5+sqrt(3)/3},0) node [midway] {\large\textcolor{black}{$3$}};
\end{tikzpicture} \vspace{0.1in}
\\ $\updownarrow$ \\ \vspace{0.1in}
\begin{tikzpicture}[scale=1]
    \draw[cyan!90,thin] (0,0) -- ({sqrt(3)/3},1) node [midway] {\large\textcolor{black}{$0$}};
    \draw[cyan!90,thin] (1,0) -- ({1+sqrt(3)/3},1) node [midway] {\large\textcolor{black}{$1$}};
    \draw[cyan!90,thin] (2,0) -- ({2+sqrt(3)/3},1) node [midway] {\large\textcolor{black}{$10$}};
    \draw[cyan!90,thin] (6.5,0.42) -- ({6.5+2*sqrt(3)/3},.42) node [midway] {\large\textcolor{black}{$0$}};
    \draw[cyan!90,thin] (8,.42) -- ({8+2*sqrt(3)/3},.42) node [midway] {\large\textcolor{black}{$1$}};
    \draw[cyan!90,thin] (9.5,.42) -- ({9.5+2*sqrt(3)/3},.42) node [midway] {\large\textcolor{black}{$10$}};
    \draw[cyan!90,thin] (3.5,1) -- ({3.5+sqrt(3)/3},0) node [midway] {\large\textcolor{black}{$0$}};
    \draw[cyan!90,thin] (4.5,1) -- ({4.5+sqrt(3)/3},0) node [midway] {\large\textcolor{black}{$1$}};
    \draw[cyan!90,thin] (5.5,1) -- ({5.5+sqrt(3)/3},0) node [midway] {\large\textcolor{black}{$10$}};
\end{tikzpicture}.
\end{center}
We write $\toprow$ for the inverse to $\partn$. We have following proposition.

\begin{prop}
\label{prop:puzzle-cryptomorphism}
For any boundary condition $\bs{\lambda}$, the transformation of edge labels shown above determines a bijection $\mc{P}_n(\bs{\lambda}) \to \tilde{\mc{P}}_n(\partn(\bs{\lambda}))$.
\end{prop}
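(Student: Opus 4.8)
The plan is to exhibit the stated transformation as a bijection of edge-\emph{labelings} that happens to preserve the face conditions. Write $\tau$ for the map of the statement. Every edge $e$ of $\Delta_n$ lies in exactly one of the three parallel classes determined by the three sides of $\Delta_n$, and $\tau$ acts on $e$ by the corresponding one of the three bijections $\{1,2,3\}\to\{0,1,10\}$ displayed above; the three bijections are cyclic rotations of one another, matching the cyclic permutation of the three parallel classes induced by a $120^{\circ}$ rotation of $\Delta_n$. Since each of the three bijections is invertible, $\tau$ is a bijection from the set of \emph{all} $\{1,2,3\}$-labelings of the edges of $\Delta_n$ onto the set of all $\{0,1,10\}$-labelings of these edges. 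Each side of $\Delta_n$ consists entirely of edges of a single parallel class, so $\tau$ carries a labeling with boundary conditions $\bs{\lambda}$ to one with boundary conditions $\partn(\bs{\lambda})$, and conversely; this is exactly how $\partn$ is defined. It therefore remains only to check that $\tau$ and $\tau^{-1}$ send face-legal labelings to face-legal labelings.

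This is a finite, local verification. Each $\Delta$-oriented face of $\Delta_n$ contains exactly one edge of each of the three parallel classes, so a legal $1/2/3$-labeling of such a face is one of the $3!=6$ ways of assigning the distinct labels $\{1,2,3\}$ to its three edges. Applying $\tau$ to each of these $6$ labelings and comparing with the list of $6$ legal $0/1/10$-configurations of a $\Delta$-face, one checks directly that $\tau$ restricts to a bijection between the two sets of $6$ legal configurations: three of the $1/2/3$-labelings map to the three monochromatic $0/1/10$-configurations, and the remaining three map to the three cyclic rotations of the mixed piece. In particular a $1/2/3$-labeling of a $\Delta$-face is legal if and only if its $\tau$-image is. An entirely analogous enumeration handles the $\nabla$-oriented faces, using that their legal $0/1/10$-labelings are precisely the $180^{\circ}$ rotations of the legal $\Delta$-configurations; alternatively one deduces the $\nabla$-case from the $\Delta$-case together with the observation that the $1/2/3$ face condition, the $0/1/10$ face condition, and $\tau$ are all equivariant under a reflection of $\Delta_n$ exchanging the $\Delta$- and $\nabla$-oriented faces.

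Combining the two steps: $P$ is a legal $1/2/3$-puzzle if and only if every face of $\tau(P)$ is legal, i.e. iff $\tau(P)$ is a legal $0/1/10$-puzzle, and symmetrically for $\tau^{-1}$; since $\tau$ already matches boundary conditions, it restricts to the desired bijection $\mc{P}_n(\bs{\lambda})\to\tilde{\mc{P}}_n(\partn(\bs{\lambda}))$. The only real work is the case analysis in the second paragraph, and I expect that to be the main (though routine) obstacle; the conceptual point that makes it go through is that the three displayed relabelings are cyclic rotations of each other, so that the $120^{\circ}$-rotational symmetry of the set of legal $0/1/10$ $\Delta$-configurations is matched on the nose — this is precisely what forces distinct $\{1,2,3\}$-labels around a face to a legal $0/1/10$-configuration, and conversely.
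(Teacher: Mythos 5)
Your proof is correct and takes essentially the same route as the paper's, whose entire proof is ``A simple check shows that the pieces for $1/2/3$-puzzles are sent to the pieces for $0/1/10$-puzzles''; you have just carried out that finite face-by-face verification explicitly, noting in addition that the underlying edge map is a bijection on \emph{all} labelings and matches boundary conditions by construction. One small caveat about the alternative you offer for the $\nabla$-faces: there is no reflection of $\Delta_n$ exchanging the $\Delta$- and $\nabla$-oriented faces (they occur in different numbers), and any local reflection would swap the two slanted edge classes, which carry \emph{different} label bijections, so $\tau$ is not equivariant under it; the correct local operation is the $180^\circ$ rotation, which preserves each edge's parallel class and is precisely how the paper defines the legal $\nabla$-pieces from the $\Delta$-pieces. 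Your primary argument, the direct enumeration for both face types, is unaffected by this.
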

\begin{proof}
    A simple check shows that the pieces for $1/2/3$-puzzles are sent to the pieces for $0/1/10$-puzzles.
\end{proof}

We write $\tilde{P}$ for the $0/1/10$-puzzle corresponding to a $1/2/3$-puzzle $P$.

\subsection{The splitting lemma} 
\label{sec:splitting}
In this section we prove an important lemma which will allow us to reduce to the case $m=3$.

\begin{defin}
\label{def:dual-array}
Let $\mu \in \{a,b\}^n$ be a string on two letters. The \emph{dual string} $\mu^\dagger$ is obtained by reversing $\mu$ and swapping $a$'s with $b$'s. If $T^{(i)}$ is a triangle such that $T^{(i)}_{j,k} \in \{a,b\}$ for all $1\leq j \leq k \leq n$, the \emph{dual triangle} $(T^{(i)})^{\dagger}$ is obtained by dualizing each row of $T^{(i)}$.
\end{defin}

\begin{defin}
\label{def:split-map}
Fix $m\geq3$ and $\d=(0=d_0 \leq d_1\leq d_2\leq\cdots\leq d_{m-1}\leq d_m=n)$ and for $i=1,\ldots,m$ fix Schubert strings $\lambda^{(i)}$ of type
\[
(m-i)^{d_{m-i}} \cdot m^{d_{m-i+1}-d_{m-i}} \cdot (m-i+1)^{n-d_{m-i+1}}.
\]
We define a map $\splt$ sending each $T \in \T_{m,n}(\lambda^{(1)},\ldots,\lambda^{(m)})$ to a pair $\splt(T)=(R,S)$ of arrays, as follows. For each $k=1,\ldots,n$, let $a_k$ be the unique element of the difference of multisets
\[
\left\{(m-2)^k,(m-1)^k,m^k\right\} \setminus \left(T^{(1)}_{\bullet,k} \cup T^{(2)}_{\bullet,k} \cup R^{(3)}_{\bullet,k-1}\right),
\]
where $R^{(3)}_{\bullet,k}$ is defined inductively by $R^{(3)}_{j,k}=R^{(3)}_{j-1,k-1}$ for $j=2,\ldots,k$ and $R^{(3)}_{1,k}=a_k$, with the base case $R^{(3)}_{\bullet,0}=\emptyset$. Then define $R$ and $S$ by
\begin{align*}
    R &= (T^{(1)},T^{(2)},R^{(3)}), \\
    S &= ((R^{(3)})^{\dagger}, T^{(3)}, \ldots, T^{(m)}).
\end{align*}
\end{defin}

\begin{ex}
\label{ex:split}
We give an example of the map $\splt$. Let $m=4$, $n=5$ and $\alpha(\bs{d})=(1,2,0,2)$. Take $\lambda^{(1)}=33443$, $\lambda^{(2)}=22323$, $\lambda^{(3)}=12424$, and $\lambda^{(4)}=11411$. Let $T\in\mc{T}_{4,5}(\lambda^{(1)},\ldots,\lambda^{(4)})$ be as shown below (top). Then $\splt(T)=(R,S) \in \T_{3,5} \times \T_{3,5}$ is shown on bottom. Note that $T=(R^{(1)},R^{(2)},S^{(2)},S^{(3)})$ and that $(R^{(3)})^\dagger = S^{(1)}$.
\begin{center}
\begin{tikzpicture}[scale=0.500000000000000]
\def\sep{2};
\draw[green!80,thin](0.000000000000000,0.000000000000000)--(4.00000000000000,0.000000000000000);
\draw[green!80,thin](4,0)--(2.00000000000000,-3.46400000000000);
\draw[green!80,thin](0,0)--(2.00000000000000,-3.46400000000000);
\draw[green!80,thin](0.500000000000000,-0.866000000000000)--(3.50000000000000,-0.866000000000000);
\draw[green!80,thin](3,0)--(1.50000000000000,-2.59800000000000);
\draw[green!80,thin](1,0)--(2.50000000000000,-2.59800000000000);
\draw[green!80,thin](1.00000000000000,-1.73200000000000)--(3.00000000000000,-1.73200000000000);
\draw[green!80,thin](2,0)--(1.00000000000000,-1.73200000000000);
\draw[green!80,thin](2,0)--(3.00000000000000,-1.73200000000000);
\draw[green!80,thin](1.50000000000000,-2.59800000000000)--(2.50000000000000,-2.59800000000000);
\draw[green!80,thin](1,0)--(0.500000000000000,-0.866000000000000);
\draw[green!80,thin](3,0)--(3.50000000000000,-0.866000000000000);
\draw[green!80,thin](2.00000000000000,-3.46400000000000)--(2.00000000000000,-3.46400000000000);
\draw[green!80,thin](0,0)--(0.000000000000000,0.000000000000000);
\draw[green!80,thin](4,0)--(4.00000000000000,0.000000000000000);
\draw[green!80,thin](6.00000000000000,0.000000000000000)--(10.0000000000000,0.000000000000000);
\draw[green!80,thin](10,0)--(8.00000000000000,-3.46400000000000);
\draw[green!80,thin](6,0)--(8.00000000000000,-3.46400000000000);
\draw[green!80,thin](6.50000000000000,-0.866000000000000)--(9.50000000000000,-0.866000000000000);
\draw[green!80,thin](9,0)--(7.50000000000000,-2.59800000000000);
\draw[green!80,thin](7,0)--(8.50000000000000,-2.59800000000000);
\draw[green!80,thin](7.00000000000000,-1.73200000000000)--(9.00000000000000,-1.73200000000000);
\draw[green!80,thin](8,0)--(7.00000000000000,-1.73200000000000);
\draw[green!80,thin](8,0)--(9.00000000000000,-1.73200000000000);
\draw[green!80,thin](7.50000000000000,-2.59800000000000)--(8.50000000000000,-2.59800000000000);
\draw[green!80,thin](7,0)--(6.50000000000000,-0.866000000000000);
\draw[green!80,thin](9,0)--(9.50000000000000,-0.866000000000000);
\draw[green!80,thin](8.00000000000000,-3.46400000000000)--(8.00000000000000,-3.46400000000000);
\draw[green!80,thin](6,0)--(6.00000000000000,0.000000000000000);
\draw[green!80,thin](10,0)--(10.0000000000000,0.000000000000000);
\draw[green!80,thin](12.0000000000000,0.000000000000000)--(16.0000000000000,0.000000000000000);
\draw[green!80,thin](16,0)--(14.0000000000000,-3.46400000000000);
\draw[green!80,thin](12,0)--(14.0000000000000,-3.46400000000000);
\draw[green!80,thin](12.5000000000000,-0.866000000000000)--(15.5000000000000,-0.866000000000000);
\draw[green!80,thin](15,0)--(13.5000000000000,-2.59800000000000);
\draw[green!80,thin](13,0)--(14.5000000000000,-2.59800000000000);
\draw[green!80,thin](13.0000000000000,-1.73200000000000)--(15.0000000000000,-1.73200000000000);
\draw[green!80,thin](14,0)--(13.0000000000000,-1.73200000000000);
\draw[green!80,thin](14,0)--(15.0000000000000,-1.73200000000000);
\draw[green!80,thin](13.5000000000000,-2.59800000000000)--(14.5000000000000,-2.59800000000000);
\draw[green!80,thin](13,0)--(12.5000000000000,-0.866000000000000);
\draw[green!80,thin](15,0)--(15.5000000000000,-0.866000000000000);
\draw[green!80,thin](14.0000000000000,-3.46400000000000)--(14.0000000000000,-3.46400000000000);
\draw[green!80,thin](12,0)--(12.0000000000000,0.000000000000000);
\draw[green!80,thin](16,0)--(16.0000000000000,0.000000000000000);
\draw[green!80,thin](18.0000000000000,0.000000000000000)--(22.0000000000000,0.000000000000000);
\draw[green!80,thin](22,0)--(20.0000000000000,-3.46400000000000);
\draw[green!80,thin](18,0)--(20.0000000000000,-3.46400000000000);
\draw[green!80,thin](18.5000000000000,-0.866000000000000)--(21.5000000000000,-0.866000000000000);
\draw[green!80,thin](21,0)--(19.5000000000000,-2.59800000000000);
\draw[green!80,thin](19,0)--(20.5000000000000,-2.59800000000000);
\draw[green!80,thin](19.0000000000000,-1.73200000000000)--(21.0000000000000,-1.73200000000000);
\draw[green!80,thin](20,0)--(19.0000000000000,-1.73200000000000);
\draw[green!80,thin](20,0)--(21.0000000000000,-1.73200000000000);
\draw[green!80,thin](19.5000000000000,-2.59800000000000)--(20.5000000000000,-2.59800000000000);
\draw[green!80,thin](19,0)--(18.5000000000000,-0.866000000000000);
\draw[green!80,thin](21,0)--(21.5000000000000,-0.866000000000000);
\draw[green!80,thin](20.0000000000000,-3.46400000000000)--(20.0000000000000,-3.46400000000000);
\draw[green!80,thin](18,0)--(18.0000000000000,0.000000000000000);
\draw[green!80,thin](22,0)--(22.0000000000000,0.000000000000000);
\node at (0.000000000000000,-0.000000000000000) {$3$};
\node at (1.00000000000000,-0.000000000000000) {$3$};
\node at (2.00000000000000,-0.000000000000000) {$4$};
\node at (3.00000000000000,-0.000000000000000) {$4$};
\node at (4.00000000000000,-0.000000000000000) {$3$};
\node at (0.500000000000000,-0.866000000000000) {$3$};
\node at (1.50000000000000,-0.866000000000000) {$3$};
\node at (2.50000000000000,-0.866000000000000) {$4$};
\node at (3.50000000000000,-0.866000000000000) {$4$};
\node at (1.00000000000000,-1.73200000000000) {$3$};
\node at (2.00000000000000,-1.73200000000000) {$3$};
\node at (3.00000000000000,-1.73200000000000) {$4$};
\node at (1.50000000000000,-2.59800000000000) {$3$};
\node at (2.50000000000000,-2.59800000000000) {$3$};
\node at (2.00000000000000,-3.46400000000000) {$3$};
\node at (6.00000000000000,-0.000000000000000) {$2$};
\node at (7.00000000000000,-0.000000000000000) {$2$};
\node at (8.00000000000000,-0.000000000000000) {$3$};
\node at (9.00000000000000,-0.000000000000000) {$2$};
\node at (10.0000000000000,-0.000000000000000) {$3$};
\node at (6.50000000000000,-0.866000000000000) {$2$};
\node at (7.50000000000000,-0.866000000000000) {$3$};
\node at (8.50000000000000,-0.866000000000000) {$2$};
\node at (9.50000000000000,-0.866000000000000) {$3$};
\node at (7.00000000000000,-1.73200000000000) {$2$};
\node at (8.00000000000000,-1.73200000000000) {$3$};
\node at (9.00000000000000,-1.73200000000000) {$2$};
\node at (7.50000000000000,-2.59800000000000) {$2$};
\node at (8.50000000000000,-2.59800000000000) {$4$};
\node at (8.00000000000000,-3.46400000000000) {$2$};
\node at (12.0000000000000,-0.000000000000000) {$1$};
\node at (13.0000000000000,-0.000000000000000) {$2$};
\node at (14.0000000000000,-0.000000000000000) {$4$};
\node at (15.0000000000000,-0.000000000000000) {$2$};
\node at (16.0000000000000,-0.000000000000000) {$4$};
\node at (12.5000000000000,-0.866000000000000) {$2$};
\node at (13.5000000000000,-0.866000000000000) {$1$};
\node at (14.5000000000000,-0.866000000000000) {$2$};
\node at (15.5000000000000,-0.866000000000000) {$4$};
\node at (13.0000000000000,-1.73200000000000) {$4$};
\node at (14.0000000000000,-1.73200000000000) {$2$};
\node at (15.0000000000000,-1.73200000000000) {$1$};
\node at (13.5000000000000,-2.59800000000000) {$2$};
\node at (14.5000000000000,-2.59800000000000) {$4$};
\node at (14.0000000000000,-3.46400000000000) {$4$};
\node at (18.0000000000000,-0.000000000000000) {$1$};
\node at (19.0000000000000,-0.000000000000000) {$1$};
\node at (20.0000000000000,-0.000000000000000) {$4$};
\node at (21.0000000000000,-0.000000000000000) {$1$};
\node at (22.0000000000000,-0.000000000000000) {$1$};
\node at (18.5000000000000,-0.866000000000000) {$1$};
\node at (19.5000000000000,-0.866000000000000) {$4$};
\node at (20.5000000000000,-0.866000000000000) {$1$};
\node at (21.5000000000000,-0.866000000000000) {$1$};
\node at (19.0000000000000,-1.73200000000000) {$4$};
\node at (20.0000000000000,-1.73200000000000) {$1$};
\node at (21.0000000000000,-1.73200000000000) {$1$};
\node at (19.5000000000000,-2.59800000000000) {$1$};
\node at (20.5000000000000,-2.59800000000000) {$1$};
\node at (20.0000000000000,-3.46400000000000) {$1$};
\draw[black,thin](-0.5,0.5)--(22.5,0.5)--(22.5,-3.964)--(-0.5,-3.964)--(-0.5,0.5);
\end{tikzpicture}

\

\begin{tikzpicture}[scale=0.350000000000000]
\def\sep{1.50000000000000};
\draw[green!80,thin](0.000000000000000,0.000000000000000)--(4.00000000000000,0.000000000000000);
\draw[green!80,thin](4.00000000000000,0)--(2.00000000000000,-3.46400000000000);
\draw[green!80,thin](0.000000000000000,0)--(2.00000000000000,-3.46400000000000);
\draw[green!80,thin](0.500000000000000,-0.866000000000000)--(3.50000000000000,-0.866000000000000);
\draw[green!80,thin](3.00000000000000,0)--(1.50000000000000,-2.59800000000000);
\draw[green!80,thin](1.00000000000000,0)--(2.50000000000000,-2.59800000000000);
\draw[green!80,thin](1.00000000000000,-1.73200000000000)--(3.00000000000000,-1.73200000000000);
\draw[green!80,thin](2.00000000000000,0)--(1.00000000000000,-1.73200000000000);
\draw[green!80,thin](2.00000000000000,0)--(3.00000000000000,-1.73200000000000);
\draw[green!80,thin](1.50000000000000,-2.59800000000000)--(2.50000000000000,-2.59800000000000);
\draw[green!80,thin](1.00000000000000,0)--(0.500000000000000,-0.866000000000000);
\draw[green!80,thin](3.00000000000000,0)--(3.50000000000000,-0.866000000000000);
\draw[green!80,thin](2.00000000000000,-3.46400000000000)--(2.00000000000000,-3.46400000000000);
\draw[green!80,thin](0.000000000000000,0)--(0.000000000000000,0.000000000000000);
\draw[green!80,thin](4.00000000000000,0)--(4.00000000000000,0.000000000000000);
\draw[green!80,thin](5.50000000000000,0.000000000000000)--(9.50000000000000,0.000000000000000);
\draw[green!80,thin](9.50000000000000,0)--(7.50000000000000,-3.46400000000000);
\draw[green!80,thin](5.50000000000000,0)--(7.50000000000000,-3.46400000000000);
\draw[green!80,thin](6.00000000000000,-0.866000000000000)--(9.00000000000000,-0.866000000000000);
\draw[green!80,thin](8.50000000000000,0)--(7.00000000000000,-2.59800000000000);
\draw[green!80,thin](6.50000000000000,0)--(8.00000000000000,-2.59800000000000);
\draw[green!80,thin](6.50000000000000,-1.73200000000000)--(8.50000000000000,-1.73200000000000);
\draw[green!80,thin](7.50000000000000,0)--(6.50000000000000,-1.73200000000000);
\draw[green!80,thin](7.50000000000000,0)--(8.50000000000000,-1.73200000000000);
\draw[green!80,thin](7.00000000000000,-2.59800000000000)--(8.00000000000000,-2.59800000000000);
\draw[green!80,thin](6.50000000000000,0)--(6.00000000000000,-0.866000000000000);
\draw[green!80,thin](8.50000000000000,0)--(9.00000000000000,-0.866000000000000);
\draw[green!80,thin](7.50000000000000,-3.46400000000000)--(7.50000000000000,-3.46400000000000);
\draw[green!80,thin](5.50000000000000,0)--(5.50000000000000,0.000000000000000);
\draw[green!80,thin](9.50000000000000,0)--(9.50000000000000,0.000000000000000);
\draw[green!80,thin](11.0000000000000,0.000000000000000)--(15.0000000000000,0.000000000000000);
\draw[green!80,thin](15.0000000000000,0)--(13.0000000000000,-3.46400000000000);
\draw[green!80,thin](11.0000000000000,0)--(13.0000000000000,-3.46400000000000);
\draw[green!80,thin](11.5000000000000,-0.866000000000000)--(14.5000000000000,-0.866000000000000);
\draw[green!80,thin](14.0000000000000,0)--(12.5000000000000,-2.59800000000000);
\draw[green!80,thin](12.0000000000000,0)--(13.5000000000000,-2.59800000000000);
\draw[green!80,thin](12.0000000000000,-1.73200000000000)--(14.0000000000000,-1.73200000000000);
\draw[green!80,thin](13.0000000000000,0)--(12.0000000000000,-1.73200000000000);
\draw[green!80,thin](13.0000000000000,0)--(14.0000000000000,-1.73200000000000);
\draw[green!80,thin](12.5000000000000,-2.59800000000000)--(13.5000000000000,-2.59800000000000);
\draw[green!80,thin](12.0000000000000,0)--(11.5000000000000,-0.866000000000000);
\draw[green!80,thin](14.0000000000000,0)--(14.5000000000000,-0.866000000000000);
\draw[green!80,thin](13.0000000000000,-3.46400000000000)--(13.0000000000000,-3.46400000000000);
\draw[green!80,thin](11.0000000000000,0)--(11.0000000000000,0.000000000000000);
\draw[green!80,thin](15.0000000000000,0)--(15.0000000000000,0.000000000000000);
\node at (0.000000000000000,-0.000000000000000) {$3$};
\node at (1.00000000000000,-0.000000000000000) {$3$};
\node at (2.00000000000000,-0.000000000000000) {$4$};
\node at (3.00000000000000,-0.000000000000000) {$4$};
\node at (4.00000000000000,-0.000000000000000) {$3$};
\node at (0.500000000000000,-0.866000000000000) {$3$};
\node at (1.50000000000000,-0.866000000000000) {$3$};
\node at (2.50000000000000,-0.866000000000000) {$4$};
\node at (3.50000000000000,-0.866000000000000) {$4$};
\node at (1.00000000000000,-1.73200000000000) {$3$};
\node at (2.00000000000000,-1.73200000000000) {$3$};
\node at (3.00000000000000,-1.73200000000000) {$4$};
\node at (1.50000000000000,-2.59800000000000) {$3$};
\node at (2.50000000000000,-2.59800000000000) {$3$};
\node at (2.00000000000000,-3.46400000000000) {$3$};
\node at (5.50000000000000,-0.000000000000000) {$2$};
\node at (6.50000000000000,-0.000000000000000) {$2$};
\node at (7.50000000000000,-0.000000000000000) {$3$};
\node at (8.50000000000000,-0.000000000000000) {$2$};
\node at (9.50000000000000,-0.000000000000000) {$3$};
\node at (6.00000000000000,-0.866000000000000) {$2$};
\node at (7.00000000000000,-0.866000000000000) {$3$};
\node at (8.00000000000000,-0.866000000000000) {$2$};
\node at (9.00000000000000,-0.866000000000000) {$3$};
\node at (6.50000000000000,-1.73200000000000) {$2$};
\node at (7.50000000000000,-1.73200000000000) {$3$};
\node at (8.50000000000000,-1.73200000000000) {$2$};
\node at (7.00000000000000,-2.59800000000000) {$2$};
\node at (8.00000000000000,-2.59800000000000) {$4$};
\node at (7.50000000000000,-3.46400000000000) {$2$};
\node at (11.0000000000000,-0.000000000000000) {$4$};
\node at (12.0000000000000,-0.000000000000000) {$2$};
\node at (13.0000000000000,-0.000000000000000) {$4$};
\node at (14.0000000000000,-0.000000000000000) {$2$};
\node at (15.0000000000000,-0.000000000000000) {$4$};
\node at (11.5000000000000,-0.866000000000000) {$2$};
\node at (12.5000000000000,-0.866000000000000) {$4$};
\node at (13.5000000000000,-0.866000000000000) {$2$};
\node at (14.5000000000000,-0.866000000000000) {$4$};
\node at (12.0000000000000,-1.73200000000000) {$4$};
\node at (13.0000000000000,-1.73200000000000) {$2$};
\node at (14.0000000000000,-1.73200000000000) {$4$};
\node at (12.5000000000000,-2.59800000000000) {$2$};
\node at (13.5000000000000,-2.59800000000000) {$4$};
\node at (13.0000000000000,-3.46400000000000) {$4$};
\draw[black,thin] (-0.5,0.5)--(15.5,0.5)--(15.5,-3.964)--(-0.5,-3.964)--(-0.5,0.5);
\end{tikzpicture}
\quad
\begin{tikzpicture}[scale=0.350000000000000]
\def\sep{1.50000000000000};
\draw[green!80,thin](0.000000000000000,0.000000000000000)--(4.00000000000000,0.000000000000000);
\draw[green!80,thin](4.00000000000000,0)--(2.00000000000000,-3.46400000000000);
\draw[green!80,thin](0.000000000000000,0)--(2.00000000000000,-3.46400000000000);
\draw[green!80,thin](0.500000000000000,-0.866000000000000)--(3.50000000000000,-0.866000000000000);
\draw[green!80,thin](3.00000000000000,0)--(1.50000000000000,-2.59800000000000);
\draw[green!80,thin](1.00000000000000,0)--(2.50000000000000,-2.59800000000000);
\draw[green!80,thin](1.00000000000000,-1.73200000000000)--(3.00000000000000,-1.73200000000000);
\draw[green!80,thin](2.00000000000000,0)--(1.00000000000000,-1.73200000000000);
\draw[green!80,thin](2.00000000000000,0)--(3.00000000000000,-1.73200000000000);
\draw[green!80,thin](1.50000000000000,-2.59800000000000)--(2.50000000000000,-2.59800000000000);
\draw[green!80,thin](1.00000000000000,0)--(0.500000000000000,-0.866000000000000);
\draw[green!80,thin](3.00000000000000,0)--(3.50000000000000,-0.866000000000000);
\draw[green!80,thin](2.00000000000000,-3.46400000000000)--(2.00000000000000,-3.46400000000000);
\draw[green!80,thin](0.000000000000000,0)--(0.000000000000000,0.000000000000000);
\draw[green!80,thin](4.00000000000000,0)--(4.00000000000000,0.000000000000000);
\draw[green!80,thin](5.50000000000000,0.000000000000000)--(9.50000000000000,0.000000000000000);
\draw[green!80,thin](9.50000000000000,0)--(7.50000000000000,-3.46400000000000);
\draw[green!80,thin](5.50000000000000,0)--(7.50000000000000,-3.46400000000000);
\draw[green!80,thin](6.00000000000000,-0.866000000000000)--(9.00000000000000,-0.866000000000000);
\draw[green!80,thin](8.50000000000000,0)--(7.00000000000000,-2.59800000000000);
\draw[green!80,thin](6.50000000000000,0)--(8.00000000000000,-2.59800000000000);
\draw[green!80,thin](6.50000000000000,-1.73200000000000)--(8.50000000000000,-1.73200000000000);
\draw[green!80,thin](7.50000000000000,0)--(6.50000000000000,-1.73200000000000);
\draw[green!80,thin](7.50000000000000,0)--(8.50000000000000,-1.73200000000000);
\draw[green!80,thin](7.00000000000000,-2.59800000000000)--(8.00000000000000,-2.59800000000000);
\draw[green!80,thin](6.50000000000000,0)--(6.00000000000000,-0.866000000000000);
\draw[green!80,thin](8.50000000000000,0)--(9.00000000000000,-0.866000000000000);
\draw[green!80,thin](7.50000000000000,-3.46400000000000)--(7.50000000000000,-3.46400000000000);
\draw[green!80,thin](5.50000000000000,0)--(5.50000000000000,0.000000000000000);
\draw[green!80,thin](9.50000000000000,0)--(9.50000000000000,0.000000000000000);
\draw[green!80,thin](11.0000000000000,0.000000000000000)--(15.0000000000000,0.000000000000000);
\draw[green!80,thin](15.0000000000000,0)--(13.0000000000000,-3.46400000000000);
\draw[green!80,thin](11.0000000000000,0)--(13.0000000000000,-3.46400000000000);
\draw[green!80,thin](11.5000000000000,-0.866000000000000)--(14.5000000000000,-0.866000000000000);
\draw[green!80,thin](14.0000000000000,0)--(12.5000000000000,-2.59800000000000);
\draw[green!80,thin](12.0000000000000,0)--(13.5000000000000,-2.59800000000000);
\draw[green!80,thin](12.0000000000000,-1.73200000000000)--(14.0000000000000,-1.73200000000000);
\draw[green!80,thin](13.0000000000000,0)--(12.0000000000000,-1.73200000000000);
\draw[green!80,thin](13.0000000000000,0)--(14.0000000000000,-1.73200000000000);
\draw[green!80,thin](12.5000000000000,-2.59800000000000)--(13.5000000000000,-2.59800000000000);
\draw[green!80,thin](12.0000000000000,0)--(11.5000000000000,-0.866000000000000);
\draw[green!80,thin](14.0000000000000,0)--(14.5000000000000,-0.866000000000000);
\draw[green!80,thin](13.0000000000000,-3.46400000000000)--(13.0000000000000,-3.46400000000000);
\draw[green!80,thin](11.0000000000000,0)--(11.0000000000000,0.000000000000000);
\draw[green!80,thin](15.0000000000000,0)--(15.0000000000000,0.000000000000000);
\node at (0.000000000000000,-0.000000000000000) {$2$};
\node at (1.00000000000000,-0.000000000000000) {$4$};
\node at (2.00000000000000,-0.000000000000000) {$2$};
\node at (3.00000000000000,-0.000000000000000) {$4$};
\node at (4.00000000000000,-0.000000000000000) {$2$};
\node at (0.500000000000000,-0.866000000000000) {$2$};
\node at (1.50000000000000,-0.866000000000000) {$4$};
\node at (2.50000000000000,-0.866000000000000) {$2$};
\node at (3.50000000000000,-0.866000000000000) {$4$};
\node at (1.00000000000000,-1.73200000000000) {$2$};
\node at (2.00000000000000,-1.73200000000000) {$4$};
\node at (3.00000000000000,-1.73200000000000) {$2$};
\node at (1.50000000000000,-2.59800000000000) {$2$};
\node at (2.50000000000000,-2.59800000000000) {$4$};
\node at (2.00000000000000,-3.46400000000000) {$2$};
\node at (5.50000000000000,-0.000000000000000) {$1$};
\node at (6.50000000000000,-0.000000000000000) {$2$};
\node at (7.50000000000000,-0.000000000000000) {$4$};
\node at (8.50000000000000,-0.000000000000000) {$2$};
\node at (9.50000000000000,-0.000000000000000) {$4$};
\node at (6.00000000000000,-0.866000000000000) {$2$};
\node at (7.00000000000000,-0.866000000000000) {$1$};
\node at (8.00000000000000,-0.866000000000000) {$2$};
\node at (9.00000000000000,-0.866000000000000) {$4$};
\node at (6.50000000000000,-1.73200000000000) {$4$};
\node at (7.50000000000000,-1.73200000000000) {$2$};
\node at (8.50000000000000,-1.73200000000000) {$1$};
\node at (7.00000000000000,-2.59800000000000) {$2$};
\node at (8.00000000000000,-2.59800000000000) {$4$};
\node at (7.50000000000000,-3.46400000000000) {$4$};
\node at (11.0000000000000,-0.000000000000000) {$1$};
\node at (12.0000000000000,-0.000000000000000) {$1$};
\node at (13.0000000000000,-0.000000000000000) {$4$};
\node at (14.0000000000000,-0.000000000000000) {$1$};
\node at (15.0000000000000,-0.000000000000000) {$1$};
\node at (11.5000000000000,-0.866000000000000) {$1$};
\node at (12.5000000000000,-0.866000000000000) {$4$};
\node at (13.5000000000000,-0.866000000000000) {$1$};
\node at (14.5000000000000,-0.866000000000000) {$1$};
\node at (12.0000000000000,-1.73200000000000) {$4$};
\node at (13.0000000000000,-1.73200000000000) {$1$};
\node at (14.0000000000000,-1.73200000000000) {$1$};
\node at (12.5000000000000,-2.59800000000000) {$1$};
\node at (13.5000000000000,-2.59800000000000) {$1$};
\node at (13.0000000000000,-3.46400000000000) {$1$};
\draw[black,thin] (-0.5,0.5)--(15.5,0.5)--(15.5,-3.964)--(-0.5,-3.964)--(-0.5,0.5);
\end{tikzpicture}
\end{center}
\end{ex}


Given any totally ordered set $\Sigma$ of cardinality $m$, we write $\T_{m,n}^{\Sigma}(\bs{\lambda})$ for the set of interlacing triangular arrays of rank $m$, height $n$, and top row $\bs{\lambda}$, with entries from $\Sigma$ (instead of from $[m]$); clearly the choice of $\Sigma$ does not affect the cardinality of this set. We can now state the splitting lemma.

\begin{lemma}\label{lem:splitting-interlacing}
Fix $m\geq3$ and $\d=(0=d_0 \leq d_1\leq d_2\leq\cdots\leq d_{m-1}\leq d_m=n)$ and for $i=1,\ldots,m$, fix Schubert strings $\lambda^{(i)}$ of type $(m-i)^{d_{m-i}}m^{d_{m-i+1}-d_{m-i}}(m-i+1)^{n-d_{m-i+1}}$. Then $\splt$ is a bijection from $\T_{m,n}(\lambda^{(1)},\ldots,\lambda^{(m)})$ to
\begin{equation}
\label{eq:splitting-union}
\bigsqcup_{\mu}\T_{3,n}^{\{m-2,m-1,m\}}(\lambda^{(1)},\lambda^{(2)},\mu)\times\T_{m-1,n}^{[m]\setminus \{m-1\}}(\mu^{\dagger},\lambda^{(3)},\ldots,\lambda^{(m)}),
\end{equation}
where the union runs over $\mu$ of type $(m-2)^{n-d_{m-2}}m^{d_{m-2}}$.
\end{lemma}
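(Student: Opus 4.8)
The plan is to exhibit an explicit inverse $\merge$ to $\splt$ and verify that both maps land where claimed. First I would record the structural facts that make the construction work. By \Cref{lem:interlacing-side-triangles}, since $\lambda^{(1)}$ is supported on $\{m-1,m\}$ and $\lambda^{(m)}$ on $\{1,m\}$, the triangle $T^{(1)}$ has constant columns and $T^{(m)}$ has constant antidiagonals, each determined by its top row. Next, a downward induction on rows shows that for $1\le a\le m-1$ the letter $a$ occurs in $T$ only in the two consecutive triangles $T^{(m-a)},T^{(m-a+1)}$: the leftmost and rightmost occurrences of $a$ in row $k{+}1$ trap those of row $k$ between them, and one checks numerically that the vertical line at horizontal coordinate $cn+\tfrac12$ (for $1\le c\le m-1$) separates triangle $c$ from triangle $c{+}1$ in \emph{every} row while passing through no entry of any row. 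In particular $T^{(1)}\subseteq\{m-1,m\}$, $T^{(2)}\subseteq\{m-2,m-1,m\}$, $T^{(3)},\dots,T^{(m)}\subseteq[m]\setminus\{m-1\}$, and all $n$ copies of $m-1$ in any row of $T$ lie in $T^{(1)}\cup T^{(2)}$.

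Let $u_k$ denote the number of $(m-2)$'s in $T^{(2)}_{\bullet,k}$. I would first prove $u_k-u_{k-1}\in\{0,1\}$: the $(m-2)$'s of rows $k$ and $k-1$ interlace perfectly (each row contains the maximal number of them), those in $T^{(2)}$ lie to the left of the separating line at coordinate $2n+\tfrac12$ and those in $T^{(3)}$ to the right of it, so the interlacing pattern forbids the left-count from jumping by more than $1$. Combining this with ``all $m-1$'s lie in $T^{(1)}\cup T^{(2)}$'', a direct multiset count shows the set difference defining $a_k$ is a singleton contained in $\{m-2,m\}$, with $a_k=m-2$ exactly when $u_k=u_{k-1}$; this makes $\splt$ well-defined. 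Since $R^{(3)}$ has constant antidiagonals by construction, its top row $\mu$ has type $(m-2)^{n-d_{m-2}}m^{d_{m-2}}$, the exponent of $m$ being $u_n$, the number of $(m-2)$'s in $\lambda^{(2)}$, which is $d_{m-2}$.

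For the image arrays: the row-content conditions for $R=(T^{(1)},T^{(2)},R^{(3)})$ and $S=((R^{(3)})^{\dagger},T^{(3)},\dots,T^{(m)})$ are short computations from $R^{(3)}_{\bullet,k}\sqcup(T^{(1)}_{\bullet,k}\cup T^{(2)}_{\bullet,k})=\{(m-2)^k,(m-1)^k,m^k\}$ together with $S^{(1)}=(R^{(3)})^{\dagger}$. For the interlacing conditions I would use three observations: (i) cutting a perfectly-interlacing pair of consecutive rows at one of the separating lines leaves a perfectly-interlacing pair on each side; (ii) any triangle with constant antidiagonals (resp. constant columns) has consecutive rows perfectly interlacing, and which of the two rows begins the pattern for a given letter is determined by whether the extra entry of row $k$ over row $k-1$ carries that letter; (iii) letters $1,\dots,m-3$ (confined to $T^{(3)},\dots,T^{(m)}$) and $m-1$ (confined to $T^{(1)},T^{(2)}$) inherit their interlacing verbatim from $T$. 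Only the letters $m-2$ and $m$ straddle the cut; for these one splices a prefix piece coming from $T^{(1)}\cup T^{(2)}$ (a prefix of $T$'s interlacing pattern, legitimate by (i)) to a piece coming from $R^{(3)}$ — or, in $S$, from $S^{(1)}=(R^{(3)})^{\dagger}$ — legitimate by (ii), and the only thing to check is that the two pieces meet with opposite parity. This follows by matching the identity $a_k=m-2\iff u_k=u_{k-1}$ against the parities of the relevant left-counts computed in the previous paragraph. Running the same three observations in reverse shows $\merge(R,S):=(R^{(1)},R^{(2)},S^{(2)},\dots,S^{(m-1)})$ is an interlacing array in $\T_{m,n}(\bs{\lambda})$ for every pair $(R,S)$ in the target; here $(R^{(3)})^{\dagger}=S^{(1)}$ holds automatically, since both sides are the triangle whose columns are the prefixes of $\mu^{\dagger}$, so the disjoint union over $\mu$ is exactly the set of admissible pairs.

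Bijectivity is then formal: $\merge\circ\splt=\mathrm{id}$ because $\merge$ merely regroups the triangles $\splt$ produced, and $\splt\circ\merge=\mathrm{id}$ because in any rank-$3$ array $R$ one has $\{R^{(3)}_{1,k}\}=\{(m-2)^k,(m-1)^k,m^k\}\setminus(R^{(1)}_{\bullet,k}\cup R^{(2)}_{\bullet,k}\cup R^{(3)}_{\bullet,k-1})$, so the $a_k$-recursion applied to $\merge(R,S)$ rebuilds $R^{(3)}$ (hence $R$), after which $(R^{(3)})^{\dagger}=S^{(1)}$ rebuilds $S$; the index $\mu$ is simply the top row of the middle triangle of $\splt(T)$. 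I expect the crux to be the interlacing verification of the third paragraph for the two ``new'' letters $m-2$ and $m$: one must track exactly which of the two rows begins the interlacing pattern on each side of the cut and confirm the parities agree, and everything there rests on the bookkeeping lemma $u_k-u_{k-1}\in\{0,1\}$ together with the dictionary between $a_k$ and the left-counts.
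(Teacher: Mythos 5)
Your proposal is correct and follows essentially the same route as the paper's proof: build $R^{(3)}$ by the deterministic multiset recursion, verify the row contents and then the interlacing letter by letter (using that all $(m-1)$'s lie in $T^{(1)}\cup T^{(2)}$, that $R^{(3)}$'s shift structure handles its internal alternation, and a counting/parity criterion for $a_k$ at the junction), and invert via the regrouping map $\merge$. The main difference is one of detail rather than substance — your separating-line argument for confining letters to consecutive triangles, the $u_k-u_{k-1}\in\{0,1\}$ bookkeeping for well-definedness of $a_k$, and the explicit check that $\splt\circ\merge=\mathrm{id}$ all make precise steps the paper dispatches with ``it is easy to see/check.''
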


\begin{proof}

Let $T \in \T_{m,n}(\lambda^{(1)},\ldots,\lambda^{(m)})$ and $(R,S)=\splt(T)$. We first argue that $R$ and $S$ are interlacing triangular arrays. 

By our assumption on $\bs{\lambda}$, only the numbers $m,m-1,m-2$ appear in $T^{(1)}$ and $T^{(2)}$. Moreover, $m-1$ appears $k$ times in $T^{(1)}_{\bullet,k} \cup T^{(2)}_{\bullet,k}$ for $k=1,\ldots,n$ since it does not appear in any $T^{(i)}$ with $i>2$. Thus it is easy to see inductively that the number $a_k$ from \Cref{def:split-map} is well-defined, that all entries $R^{(3)}_{j,k}$ lie in $\{m-2,m\}$ and that $T^{(1)}_{\bullet,k} \cup T^{(2)}_{\bullet,k} \cup R^{(3)}_{\bullet,k} = \{(m-2)^k,(m-1)^k,m^k\}$ for $k=1,\ldots,n$. 

It remains to check the interlacing condition for $R$. The $(m-1)$'s interlace, because they all appear in $T^{(1)}$ and $T^{(2)}$ and because $T$ is interlacing by hypothesis. Consider the $m$'s. By the inductive definition of $R^{(3)}$, if $R^{(3)}_{j,k}=m$ for $j\neq 1$, then $R^{(3)}_{j-1,k-1}=m$. Thus between any pair of $m$'s in $R^{(3)}_{\bullet,k}$ there is an interlacing $m$ in $R^{(3)}_{\bullet,k-1}$, and so the $m$'s are interlacing within $R^{(3)}$. Furthermore, $a_k=m$ if and only if the number of $m$'s in $T^{(1)}_{\bullet,k} \cup T^{(2)}_{\bullet,k}$ is equal to the number in $T^{(1)}_{\bullet,k-1} \cup T^{(2)}_{\bullet,k-1}$. Thus the $m$'s alternate between rows $k-1$ and $k$ as we move the horizontal coordinate left to right, and so they interlace. The $(m-2)$'s are likewise interlacing. An analogous argument show that $S$ is an interlacing triangular array.

Now, define a map $\mathsf{merge}$ on
\[
\bigsqcup_{\mu}\T_{3,n}^{\{m-2,m-1,m\}}(\lambda^{(1)},\lambda^{(2)},\mu)\times\T_{m-1,n}^{[m]\setminus \{m-1\}}(\mu^{\dagger},\lambda^{(3)},\ldots,\lambda^{(m)})
\]
sending $(R,S) \mapsto (R^{(1)},R^{(2)},S^{(2)},S^{(3)},\ldots,S^{(m-1)}).$ It is easy to check that $\merge(R,S)$ lies in $\T_{m,n}(\lambda^{(1)},\ldots,\lambda^{(m)})$ and that $\merge$ is the inverse of $\splt$.
\end{proof}

\subsection{Avoiding puzzle pieces} The geometric interpretations of $0/1/10$-puzzles in terms of Schubert classes in cohomology, the $G_{\xi}$ basis in $K$-theory, and the $G^{\ast}_{\xi}$ basis in $K$-theory involve forbidding the $\Delta$- and $\nabla$-oriented $10$-$10$-$10$ pieces, or one or the other of these pieces. We call these two pieces, as well as their $1/2/3$-analogs, the \emph{$K$-pieces}. In this section, we describe the forbidden substructures in interlacing triangular arrays corresponding to forbidden one or both of the $K$-pieces.

\begin{prop}
\label{prop:avoiding-delta-10-10-10}
Suppose that $\lambda^{(1)} \in \{2,3\}^n, \lambda^{(2)} \in \{1,2,3\}^n,$ and $\lambda^{(3)} \in \{1,3\}^n$. Then the followings are equivalent:
\begin{itemize}[leftmargin=0.3in]
    \item[(a)] The $0/1/10$-puzzle $\tilde{P}$ with boundary conditions $\partn(\bs{\lambda})$ avoids the piece 
    \raisebox{-0.5\height}{
    \begin{tikzpicture}[scale=0.8, rotate=-60]
    \node[circle, fill=black, scale=0.1] (c0) at ({-2*sqrt(3)/3},2) {};
    \node[circle, fill=black, scale=0.1] (d0) at ({-3*sqrt(3)/3},3) {};
    \node[circle, fill=black, scale=0.1] (d1) at ({-1*sqrt(3)/3},3) {};
    \draw[cyan!90,thin] (c0) -- (d0) node [midway] {\textcolor{black}{$10$}};
    \draw[cyan!90,thin] (c0) -- (d1) node [midway] {\textcolor{black}{$10$}};
    \draw[cyan!90,thin] (d0) -- (d1) node [midway] {\textcolor{black}{$10$}};
\end{tikzpicture}}.
    \item[(b)] The $1/2/3$-puzzle $P$ with boundary conditions $\bs{\lambda}$ avoids the piece 
    \raisebox{-0.5\height}{
    \begin{tikzpicture}[scale=0.8, rotate=-60]
    \node[circle, fill=black, scale=0.1] (c0) at ({-2*sqrt(3)/3},2) {};
    \node[circle, fill=black, scale=0.1] (d0) at ({-3*sqrt(3)/3},3) {};
    \node[circle, fill=black, scale=0.1] (d1) at ({-1*sqrt(3)/3},3) {};
    \draw[blue!80,thin] (c0) -- (d0) node [midway] {\textcolor{black}{$1$}};
    \draw[blue!80,thin] (c0) -- (d1) node [midway] {\textcolor{black}{$2$}};
    \draw[blue!80,thin] (d0) -- (d1) node [midway] {\textcolor{black}{$3$}};
\end{tikzpicture}}.
    \item[(c)] The array $T=\mathscr{T}(P)$ with top row $\bs{\lambda}$ avoids \raisebox{-0.5\height}{
\begin{tikzpicture}[scale=0.8]
    \draw[green!80,thin] (0,0)--(-0.5,.866);
    \node at (-0.5,.866) {$3$};
    \node at (0,0) {$1$};
\end{tikzpicture}} and 
\raisebox{-0.5\height}{
\begin{tikzpicture}[scale=0.8]
    \draw[green!80,thin] (0,0)--(-0.5,.866)--(0.5,.866)--(0,0);
    \node at (-0.5,.866) {$3$};
    \node at (0.5,.866) {$2$};
    \node at (0,0) {$3$};
\end{tikzpicture}} in $T^{(2)}$.
  \item[(d)] The array $T=\mathscr{T}(P)$ with top row $\bs{\lambda}$ avoids \raisebox{-0.5\height}{
\begin{tikzpicture}[scale=0.8]
    \draw[green!80,thin] (0,0)--(0.5,.866);
    \node at (0.5,.866) {$3$};
    \node at (0,0) {$2$};
\end{tikzpicture}} and 
\raisebox{-0.5\height}{
\begin{tikzpicture}[scale=0.8]
    \draw[green!80,thin] (0,0)--(-0.5,.866)--(0.5,.866)--(0,0);
    \node at (-0.5,.866) {$1$};
    \node at (0.5,.866) {$3$};
    \node at (0,0) {$3$};
\end{tikzpicture}} in $T^{(2)}$.
\end{itemize}
\end{prop}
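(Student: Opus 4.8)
The plan is to prove the chain of implications (a)$\Leftrightarrow$(b)$\Leftrightarrow$(c) and then handle (c)$\Leftrightarrow$(d) as a separate combinatorial equivalence. First I would record why everything is well-posed: the hypotheses $\lambda^{(1)}\in\{2,3\}^n$ and $\lambda^{(3)}\in\{1,3\}^n$ are exactly what makes $\partn(\bs{\lambda})$ a triple of $0,1$-strings, so that $\tilde{\mc{P}}_n(\partn(\bs{\lambda}))$ and the $0/1/10$-puzzle $\tilde P$ are defined; and by \Cref{lem:interlacing-side-triangles} together with these hypotheses every entry of $T^{(1)}$ lies in $\{2,3\}$ and every entry of $T^{(3)}$ lies in $\{1,3\}$. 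Consequently every entry equal to $1$, hence every occurrence of a pattern from (c) or (d), lives in $T^{(2)}$, and throughout the argument I need only track $T^{(2)}$ together with the two boundary entries $T^{(1)}_{k,k}$ and $T^{(3)}_{1,k}$ that feed into the construction of the $k$-th row.

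For (a)$\Leftrightarrow$(b) I would invoke \Cref{prop:puzzle-cryptomorphism} directly: the bijection $P\mapsto\tilde P$ relabels each edge according to its slope alone, so it acts face by face, carrying the $3!$ legal labelings of a $\Delta$-oriented face of a $1/2/3$-puzzle bijectively onto those of a $\Delta$-oriented face of a $0/1/10$-puzzle. It then suffices to check from the three slope substitutions that the labeling displayed in (b) is the unique one sent to the $\Delta$-oriented $10/10/10$ piece, so that $P$ contains the piece of (b) if and only if $\tilde P$ contains the $10/10/10$ piece.

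The core of the argument is (b)$\Leftrightarrow$(c), which I would carry out by induction on $n$, following the inductive construction of $\mathscr T'$ in the proof of \Cref{thm:psi-is-bijection}. Passing from $\Delta_{n-1}$ to $\Delta_n$, the new edges are forced one at a time, left to right, by \Cref{lem:one-row} from $\bs{a}=(T^{(1)}_{n,n},T^{(2)}_{1,n},\dots,T^{(2)}_{n,n},T^{(3)}_{1,n})$ and $\bs{b}=(T^{(2)}_{1,n-1},\dots,T^{(2)}_{n-1,n-1})$. The inductive hypothesis disposes of all $\Delta$-faces contained in $\Delta_{n-1}$, so I only need to examine the $\Delta$-faces completed for the first time in this last layer. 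Tracing the forcing, one finds that such a face receives the forbidden labeling of (b) exactly when two consecutive entries of $\bs{a}$ read $3,2$ (in that order) while sitting directly over an entry of $\bs{b}$ equal to $3$; the only other a priori cause, namely the entry $T^{(1)}_{n,n}$ of $\bs{a}$ being a $1$, is excluded by the first paragraph. Reading this through the horizontal-coordinate formula $h(i,j,k)=in+j-(n+k)/2$, under which the entries of $T^{(2)}$ at $(j,k+1)$ and $(j+1,k+1)$ are respectively the up-left and up-right neighbours of the entry at $(j,k)$, the bad configuration becomes precisely the triangular pattern of (c) with $3,3,2$ at the bottom, up-left, and up-right vertices; the degenerate subcase in which the ``$2$'' is itself a forced interior value rather than a top-row entry of $T^{(2)}$ yields instead the edge pattern of (c) with a $1$ below a $3$. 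The reverse direction is the same computation read backwards.

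Finally, (c)$\Leftrightarrow$(d) I would prove as a direct combinatorial equivalence for interlacing arrays: comparing two consecutive rows of $T^{(2)}$, the interlacing condition pins down where the single ``extra'' $3$ of the upper row can sit relative to the lower row, and a short case analysis shows that the two patterns of (c) occur somewhere if and only if the two patterns of (d) do --- the two lists record the same bad local shapes of $T^{(2)}$, viewed from its left-leaning side versus its right-leaning side; alternatively one reruns the (b)$\Leftrightarrow$(c) argument with the right-to-left version of the forcing in \Cref{lem:one-row}. I expect the main obstacle to be the bookkeeping inside (b)$\Leftrightarrow$(c): correctly matching the three edge labels of a bad $\Delta$-face (drawn in a rotated frame in the figures) to the bottom / up-left / up-right vertices of the corresponding small triangle of $T^{(2)}$, identifying which interior edge the forcing of \Cref{lem:one-row} resolves at that face, and confirming that the $T^{(1)}$--$T^{(2)}$ interface never produces a new bad face. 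The remaining steps are routine.
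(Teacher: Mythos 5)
Your overall architecture is sound, and (a)$\Leftrightarrow$(b) is handled exactly as in the paper. But the inductive wrapper around (b)$\Leftrightarrow$(c) doesn't actually change the nature of the argument: the slices $\Delta_k\setminus\Delta_{k-1}$ you peel off are precisely the ``northwest-southeast slices'' the paper works in directly, and in the inductive step you still must determine whether the last slice contains a $K$-piece. That is not a face-by-face local test. The criterion you state --- ``such a face receives the forbidden labeling exactly when two consecutive entries of $\bs{a}$ read $3,2$ over an entry $3$ of $\bs{b}$, or $T^{(1)}_{n,n}=1$'' --- is false as a local predicate: if $a_j=3,\,a_{j+1}=2,\,b_j=3$ then the $K$-piece that is forced sits at position $j-1$, not $j$; and, more seriously, a $K$-piece deep inside a long chain has its diagnostic $T^{(2)}$-pattern only at the chain's terminus, which can be far from the piece. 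What is true (and what the paper proves) is the global statement for the slice: a $K$-piece appears somewhere in the slice iff one of the two patterns from (c) occurs between rows $n-1$ and $n$. Proving the forward implication requires propagating the chain of $K$-pieces to its southeast terminus and checking the two possible termination shapes (ending at a $\Delta$-face gives pattern (c)(i), ending at a $\nabla$-face gives pattern (c)(ii)); the hypothesis $\lambda^{(3)}\in\{1,3\}^n$, not $\lambda^{(1)}\in\{2,3\}^n$, is what guarantees this chain terminates. The converse is the genuinely local step, and goes through exactly as the paper's second pair of diagrams. So your ``tracing the forcing, one finds'' hides the entire content.

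The larger gap is (c)$\Leftrightarrow$(d). The patterns in (c) and (d) arise at opposite ends of the same $K$-chain and are nonlocal relative to each other, so a ``short case analysis showing the two lists record the same bad local shapes of $T^{(2)}$'' cannot work by comparing adjacent rows alone --- the two patterns can appear arbitrarily far apart. One can construct arrays in which a (c)(ii) pattern at position $j$ forces a (d)(i) or (d)(ii) pattern only many columns to the left, detected only by running the puzzle chain. Your fallback --- rerun (b)$\Leftrightarrow$(c) with the opposite orientation of the forcing --- is the right move and is exactly what the paper does (it proves (b)$\Leftrightarrow$(d) by the northwest chain, using $\lambda^{(1)}\in\{2,3\}^n$ for termination, and then notes (b)$\Leftrightarrow$(c) is the mirror image). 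Net: your plan is a repackaging of the paper's slice/chain argument, not an independent route, and the pieces you leave as ``bookkeeping'' are where the actual proof lives.
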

\begin{proof}
    Since the piece \raisebox{-0.2\height}{
    \begin{tikzpicture}[scale=0.7, rotate=-60]
    \node[circle, fill=black, scale=0.1] (c0) at ({-2*sqrt(3)/3},2) {};
    \node[circle, fill=black, scale=0.1] (d0) at ({-3*sqrt(3)/3},3) {};
    \node[circle, fill=black, scale=0.1] (d1) at ({-1*sqrt(3)/3},3) {};
    \draw[blue!80,thin] (c0) -- (d0) node [midway] {\textcolor{black}{$1$}};
    \draw[blue!80,thin] (c0) -- (d1) node [midway] {\textcolor{black}{$2$}};
    \draw[blue!80,thin] (d0) -- (d1) node [midway] {\textcolor{black}{$3$}};
\end{tikzpicture}} is sent to the piece \raisebox{-0.2\height}{
    \begin{tikzpicture}[scale=0.7, rotate=-60]
    \node[circle, fill=black, scale=0.1] (c0) at ({-2*sqrt(3)/3},2) {};
    \node[circle, fill=black, scale=0.1] (d0) at ({-3*sqrt(3)/3},3) {};
    \node[circle, fill=black, scale=0.1] (d1) at ({-1*sqrt(3)/3},3) {};
    \draw[cyan!90,thin] (c0) -- (d0) node [midway] {\textcolor{black}{$10$}};
    \draw[cyan!90,thin] (c0) -- (d1) node [midway] {\textcolor{black}{$10$}};
    \draw[cyan!90,thin] (d0) -- (d1) node [midway] {\textcolor{black}{$10$}};
\end{tikzpicture}} by the transformation of \Cref{prop:puzzle-cryptomorphism}, the equivalence of (a) and (b) is clear. We now prove the equivalence of (b) and (d).

We use the contrapositive. Suppose that $P$ contains the piece \raisebox{-0.5\height}{
    \begin{tikzpicture}[scale=0.8, rotate=-60]
    \node[circle, fill=black, scale=0.1] (c0) at ({-2*sqrt(3)/3},2) {};
    \node[circle, fill=black, scale=0.1] (d0) at ({-3*sqrt(3)/3},3) {};
    \node[circle, fill=black, scale=0.1] (d1) at ({-1*sqrt(3)/3},3) {};
    \draw[blue!80,thin] (c0) -- (d0) node [midway] {\textcolor{black}{$1$}};
    \draw[blue!80,thin] (c0) -- (d1) node [midway] {\textcolor{black}{$2$}};
    \draw[blue!80,thin] (d0) -- (d1) node [midway] {\textcolor{black}{$3$}};
\end{tikzpicture}} at position~$A$. Consider the southeast-to-northwest slice of $P$ containing the piece and the maximal sequence of consecutive $K$-pieces beginning at $A$ and continuing to the northwest within the slice (see diagrams below).

\begin{center}
\begin{tikzpicture}[scale=1,rotate=-60]
    \node[circle, fill=black, scale=0.1] (c0) at ({-2*sqrt(3)/3},2) {};
    \node[circle, fill=black, scale=0.1] (c1) at (0,2) {};
    \node[circle, fill=black, scale=0.1] (c2) at ({2*sqrt(3)/3},2) {};
    \node[circle, fill=black, scale=0.1] (c3) at ({4*sqrt(3)/3},2) {};
    \node[circle, fill=black, scale=0.1] (d0) at ({-3*sqrt(3)/3},3) {};
    \node[circle, fill=black, scale=0.1] (d1) at ({-1*sqrt(3)/3},3) {};
    \node[circle, fill=black, scale=0.1] (d2) at ({1*sqrt(3)/3},3) {};
    \node[circle, fill=black, scale=0.1] (d3) at ({3*sqrt(3)/3},3) {};
    \node (A) at ({.8+2.5*sqrt(3)/3},2.7) {\textcolor{blue}{$A$}};
    \node[circle, fill=black, scale=0.1] (d4) at ({5*sqrt(3)/3},3) {};
    \filldraw[blue!80,opacity=0.2] ({5*sqrt(3)/3},3)--({4*sqrt(3)/3},2)--({-2*sqrt(3)/3},2)--({-1*sqrt(3)/3},3)--({5*sqrt(3)/3},3);
    \draw[blue!80,thin] (c0) -- (c1) node [midway] {\Large\textcolor{black}{$\bs{3}$}};
    \draw[blue!80,thin] (c1) -- (c2) node [midway] {\large\textcolor{black}{$3$}};
    \draw[blue!80,thin] (c0) -- (d0) node [midway] {\large\textcolor{black}{$3$}};
    \draw[blue!80,thin] (c0) -- (d1) node [midway] {\large\textcolor{black}{$2$}};
    \draw[blue!80,thin] (c1) -- (d1) node [midway] {\large\textcolor{black}{$1$}};
    \draw[blue!80,thin] (c1) -- (d2) node [midway] {\large\textcolor{black}{$2$}};
    \draw[blue!80,thin] (c2) -- (d2) node [midway] {\large\textcolor{black}{$1$}};
    \draw[blue!80,thin] (d0) -- (d1) node [midway] {\Large\textcolor{black}{$\bs{1}$}};
    \draw[blue!80,thin] (d1) -- (d2) node [midway] {\Large\textcolor{black}{$\bs{3}$}};
    \draw[dashed,blue!80] (d2) -- (d3) node [midway] {};
    \draw[dashed,blue!80] (c2) -- (c3) node [midway] {};
    \draw[blue!80,thin] (c3) -- (d4) node [midway] {\large\textcolor{black}{$2$}};
    \draw[blue!80,thin] (c3) -- (d3) node [midway] {\large\textcolor{black}{$1$}};
    \draw[blue!80,thin] (d3) -- (d4) node [midway] {\large\textcolor{black}{$3$}};
\end{tikzpicture}\hspace{0.75in}
\begin{tikzpicture}[scale=1,rotate=-60]
    \node[circle, fill=black, scale=0.1] (c0) at ({-2*sqrt(3)/3},2) {};
    \node[circle, fill=black, scale=0.1] (c1) at (0,2) {};
    \node[circle, fill=black, scale=0.1] (c2) at ({2*sqrt(3)/3},2) {};
    \node[circle, fill=black, scale=0.1] (c3) at ({4*sqrt(3)/3},2) {};
    \node[circle, fill=black, scale=0.1] (d1) at ({-1*sqrt(3)/3},3) {};
    \node[circle, fill=black, scale=0.1] (d2) at ({1*sqrt(3)/3},3) {};
    \node[circle, fill=black, scale=0.1] (d3) at ({3*sqrt(3)/3},3) {};
    \node (A) at ({.8+2.5*sqrt(3)/3},2.7) {\textcolor{blue}{$A$}};
    \node[circle, fill=black, scale=0.1] (d4) at ({5*sqrt(3)/3},3) {};
    \filldraw[blue!80,opacity=0.2] ({5*sqrt(3)/3},3)--({4*sqrt(3)/3},2)--(0,2)--({-1*sqrt(3)/3},3)--({5*sqrt(3)/3},3);
    \draw[blue!80,thin] (c0) -- (c1) node [midway] {\Large\textcolor{black}{$\bs{2}$}};
    \draw[blue!80,thin] (c1) -- (c2) node [midway] {\large\textcolor{black}{$3$}};
    \draw[blue!80,thin] (c0) -- (d1) node [midway] {\large\textcolor{black}{$3$}};
    \draw[blue!80,thin] (c1) -- (d1) node [midway] {\large\textcolor{black}{$1$}};
    \draw[blue!80,thin] (c1) -- (d2) node [midway] {\large\textcolor{black}{$2$}};
    \draw[blue!80,thin] (c2) -- (d2) node [midway] {\large\textcolor{black}{$1$}};
    \draw[blue!80,thin] (d1) -- (d2) node [midway] {\Large\textcolor{black}{$\bs{3}$}};
    \draw[dashed,blue!80] (d2) -- (d3) node [midway] {};
    \draw[dashed,blue!80] (c2) -- (c3) node [midway] {};
    \draw[blue!80,thin] (c3) -- (d4) node [midway] {\large\textcolor{black}{$2$}};
    \draw[blue!80,thin] (c3) -- (d3) node [midway] {\large\textcolor{black}{$1$}};
    \draw[blue!80,thin] (d3) -- (d4) node [midway] {\large\textcolor{black}{$3$}};
\end{tikzpicture}
\end{center}
This sequence does not continue to the end of the slice because $\lambda^{(1)}$ does not contain $1$ by hypothesis. If the last $K$-piece in the sequence is $\nabla$-oriented, then the slice must be as above, left. Thus $T^{(2)}$ contains the bolded instance of \raisebox{-0.5\height}{
\begin{tikzpicture}[scale=0.8]
    \draw[green!80,thin] (0,0)--(-0.5,.866)--(0.5,.866)--(0,0);
    \node at (-0.5,.866) {$1$};
    \node at (0.5,.866) {$3$};
    \node at (0,0) {$3$};
\end{tikzpicture}}. If instead the last $K$-piece in the sequence is $\Delta$-oriented, then the slice must be as above, right. In this case $T^{(2)}$ contains the bolded instance of \raisebox{-0.5\height}{
\begin{tikzpicture}[scale=0.8]
    \draw[green!80,thin] (0,0)--(0.5,.866);
    \node at (0.5,.866) {$3$};
    \node at (0,0) {$2$};
\end{tikzpicture}}.

If, conversely, $T^{(2)}$ contains one of the patterns, then $P=\mathscr{T}'(T)$ must contain one of the substructures below:

\begin{center}
\begin{tikzpicture}[scale=1,rotate=-60]
    \node[circle, fill=black, scale=0.1] (c0) at ({-2*sqrt(3)/3},2) {};
    \node[circle, fill=black, scale=0.1] (c1) at (0,2) {};
    \node[circle, fill=black, scale=0.1] (d0) at ({-3*sqrt(3)/3},3) {};
    \node[circle, fill=black, scale=0.1] (d1) at ({-1*sqrt(3)/3},3) {};
    \node[circle, fill=black, scale=0.1] (d2) at ({1*sqrt(3)/3},3) {};
    \draw[blue!80,thin] (c0) -- (c1) node [midway] {\Large\textcolor{black}{$\bs{3}$}};
    \draw[blue!80,thin] (c0) -- (d0) node [midway] {\large\textcolor{black}{$3$}};
    \draw[blue!80,thin] (c0) -- (d1) node [midway] {\large\textcolor{black}{$2$}};
    \draw[blue!80,thin] (c1) -- (d1) node [midway] {\large\textcolor{black}{$1$}};
    \draw[blue!80,thin] (c1) -- (d2) node [midway] {\large\textcolor{black}{$2$}};
    \draw[blue!80,thin] (d0) -- (d1) node [midway] {\Large\textcolor{black}{$\bs{1}$}};
    \draw[blue!80,thin] (d1) -- (d2) node [midway] {\Large\textcolor{black}{$\bs{3}$}};
\end{tikzpicture}\hspace{0.75in}
\begin{tikzpicture}[scale=1,rotate=-60]
    \node[circle, fill=black, scale=0.1] (c0) at ({-2*sqrt(3)/3},2) {};
    \node[circle, fill=black, scale=0.1] (c1) at (0,2) {};
    \node[circle, fill=black, scale=0.1] (d1) at ({-1*sqrt(3)/3},3) {};
    \node[circle, fill=black, scale=0.1] (d2) at ({1*sqrt(3)/3},3) {};
    \draw[blue!80,thin] (c0) -- (c1) node [midway] {\Large\textcolor{black}{$\bs{2}$}};
    \draw[blue!80,thin] (c0) -- (d1) node [midway] {\large\textcolor{black}{$3$}};
    \draw[blue!80,thin] (c1) -- (d1) node [midway] {\large\textcolor{black}{$1$}};
    \draw[blue!80,thin] (c1) -- (d2) node [midway] {\large\textcolor{black}{$2$}};
    \draw[blue!80,thin] (d1) -- (d2) node [midway] {\Large\textcolor{black}{$\bs{3}$}};
\end{tikzpicture}.
\end{center}
In either case, $P$ contains the desired piece.

The equivalence of (b) and (c) can be proven similarly, by considering the slice heading southeast (rather than northwest) from $A$.
\end{proof}

The following proposition is closely analogous to \Cref{prop:avoiding-delta-10-10-10} and its proof is omitted.
\begin{prop}
\label{prop:avoiding-nabla-10-10-10}
Suppose that $\lambda^{(1)} \in \{2,3\}^n, \lambda^{(2)} \in \{1,2,3\}^n,$ and $\lambda^{(3)} \in \{1,3\}^n$. Then the following are equivalent:
\begin{itemize}[leftmargin=0.3in]
    \item[(a)] The $0/1/10$-puzzle $\tilde{P}$ with boundary conditions $\partn(\bs{\lambda})$ avoids the piece 
    \raisebox{-0.5\height}{
    \begin{tikzpicture}[scale=0.8, rotate=-240]
    \node[circle, fill=black, scale=0.1] (c0) at ({-2*sqrt(3)/3},2) {};
    \node[circle, fill=black, scale=0.1] (d0) at ({-3*sqrt(3)/3},3) {};
    \node[circle, fill=black, scale=0.1] (d1) at ({-1*sqrt(3)/3},3) {};
    \draw[cyan!90,thin] (c0) -- (d0) node [midway] {\textcolor{black}{$10$}};
    \draw[cyan!90,thin] (c0) -- (d1) node [midway] {\textcolor{black}{$10$}};
    \draw[cyan!90,thin] (d0) -- (d1) node [midway] {\textcolor{black}{$10$}};
\end{tikzpicture}}.
    \item[(b)] The $1/2/3$-puzzle $P$ with boundary conditions $\bs{\lambda}$ avoids the piece 
    \raisebox{-0.5\height}{
    \begin{tikzpicture}[scale=0.8, rotate=-240]
    \node[circle, fill=black, scale=0.1] (c0) at ({-2*sqrt(3)/3},2) {};
    \node[circle, fill=black, scale=0.1] (d0) at ({-3*sqrt(3)/3},3) {};
    \node[circle, fill=black, scale=0.1] (d1) at ({-1*sqrt(3)/3},3) {};
    \draw[blue!80,thin] (c0) -- (d0) node [midway] {\textcolor{black}{$1$}};
    \draw[blue!80,thin] (c0) -- (d1) node [midway] {\textcolor{black}{$2$}};
    \draw[blue!80,thin] (d0) -- (d1) node [midway] {\textcolor{black}{$3$}};
\end{tikzpicture}}.
    \item[(c)] The array $T=\mathscr{T}(P)$ with top row $\bs{\lambda}$ avoids \raisebox{-0.5\height}{
\begin{tikzpicture}[scale=0.8,rotate=180]
    \draw[green!80,thin] (0,0)--(-0.5,.866);
    \node at (-0.5,.866) {$3$};
    \node at (0,0) {$1$};
\end{tikzpicture}} and 
\raisebox{-0.5\height}{
\begin{tikzpicture}[scale=0.8,rotate=180]
    \draw[green!80,thin] (0,0)--(-0.5,.866)--(0.5,.866)--(0,0);
    \node at (-0.5,.866) {$3$};
    \node at (0.5,.866) {$2$};
    \node at (0,0) {$3$};
\end{tikzpicture}} in $T^{(2)}$.
  \item[(d)] The array $T=\mathscr{T}(P)$ with top row $\bs{\lambda}$ avoids \raisebox{-0.5\height}{
\begin{tikzpicture}[scale=0.8,rotate=180]
    \draw[green!80,thin] (0,0)--(0.5,.866);
    \node at (0.5,.866) {$3$};
    \node at (0,0) {$2$};
\end{tikzpicture}} and 
\raisebox{-0.5\height}{
\begin{tikzpicture}[scale=0.8,rotate=180]
    \draw[green!80,thin] (0,0)--(-0.5,.866)--(0.5,.866)--(0,0);
    \node at (-0.5,.866) {$1$};
    \node at (0.5,.866) {$3$};
    \node at (0,0) {$3$};
\end{tikzpicture}} in $T^{(2)}$.
\end{itemize}
\end{prop}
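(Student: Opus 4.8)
The plan is to imitate the proof of \Cref{prop:avoiding-delta-10-10-10} essentially verbatim, replacing ``$\Delta$-oriented'' by ``$\nabla$-oriented'' throughout and rotating every local picture by $180^{\circ}$; note, though, that the hypotheses on $\bs{\lambda}$ are identical to those of \Cref{prop:avoiding-delta-10-10-10}, so this is a parallel but genuinely separate argument, not an instance of a symmetry of $\Delta_n$ (no automorphism of $\Delta_n$ exchanges its $\Delta$- and $\nabla$-faces). The equivalence of (a) and (b) is immediate from \Cref{prop:puzzle-cryptomorphism}: the edge-label transformation there carries the $\nabla$-oriented $1$-$2$-$3$ piece to the $\nabla$-oriented $10$-$10$-$10$ piece, so $P$ avoids the one exactly when $\tilde{P}$ avoids the other.

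For the equivalence of (b) with (c) and with (d), I would argue by contrapositive exactly as in \Cref{prop:avoiding-delta-10-10-10}. Suppose $P$ contains the $\nabla$-oriented $1$-$2$-$3$ piece at a position $A$, and take the maximal run of consecutive $K$-pieces in the slice of $\Delta_n$ through $A$, propagated from $A$ in the direction appropriate to the $\nabla$ orientation (the mirror of the northwest direction used there for (d), and its opposite for (c)). As before, this run cannot reach the boundary of $\Delta_n$: extending it to a boundary face would force a boundary edge to carry a label omitted by the side it lies on, contradicting $\lambda^{(1)}\in\{2,3\}^n$ or $\lambda^{(3)}\in\{1,3\}^n$. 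Hence the run has a last $K$-piece, $\Delta$- or $\nabla$-oriented; in each case the terminal local configuration is the $180^{\circ}$ rotation of one of the two terminal pictures in the proof of \Cref{prop:avoiding-delta-10-10-10}, and reading it through $\mathscr{T}$ produces the corresponding pattern of (d) (respectively (c)) inside $T^{(2)}$. Conversely, if $T^{(2)}$ contains one of the listed patterns, unwinding the construction of $\mathscr{T}'$ yields a local substructure of $P=\mathscr{T}'(T)$ --- the $180^{\circ}$ rotation of the substructure appearing in the $\Delta$ case --- that already contains the $\nabla$-oriented $1$-$2$-$3$ piece.

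The main (but still routine) obstacle is the one step that is not pure bookkeeping: identifying the correct slice direction for a $\nabla$-oriented bad piece and re-verifying that the run terminates for the \emph{unrotated} boundary hypotheses. Because the conclusion is a $180^{\circ}$ rotation while the hypotheses are not, one cannot just invoke a symmetry, and must check directly that $\lambda^{(1)}\in\{2,3\}^n$ and $\lambda^{(3)}\in\{1,3\}^n$ are the obstructions that prevent the run from running into the two relevant sides of $\Delta_n$. Once this is settled, the terminal two-case analysis and the converse are mechanical mirror images of the corresponding steps in \Cref{prop:avoiding-delta-10-10-10} --- which is exactly why the authors omit the proof.
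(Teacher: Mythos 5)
Your proposal is correct and matches what the paper intends: the paper itself simply states that the proof ``is closely analogous to \Cref{prop:avoiding-delta-10-10-10} and its proof is omitted.'' You also correctly flag the one genuine subtlety — since no automorphism of $\Delta_n$ swaps $\Delta$- and $\nabla$-faces, the boundary-termination step must be re-verified against the unrotated hypotheses, and with the run directions you indicate (going SE toward the bottom for (d), so $\lambda^{(3)}\in\{1,3\}^n$ blocks the horizontal label $2$, and NW toward the left side for (c), so $\lambda^{(1)}\in\{2,3\}^n$ blocks the NE label $1$) the argument goes through verbatim.
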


\subsection{Proofs of geometric interpretations} We can now prove \Cref{thm:k-theory,thm:dual-k,thm:ssm,thm:2-step}.

We first verify the easy cases $m=1$ and $2$. The first is immediate:

\begin{prop}
\label{prop:m-equals-1-interlacing}
For all $n \geq 1$ there is a unique interlacing triangular array in $\T_{1,n}$.
\end{prop}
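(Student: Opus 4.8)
The plan is short. Any array $T\in\T_{1,n}$ has all of its entries drawn from $[1,1]=\{1\}$, so there is at most one candidate, namely the array $T$ with $T^{(1)}_{j,k}=1$ for all $1\le j\le k\le n$. It therefore remains only to verify that this all-ones array satisfies conditions (a) and (b) of \Cref{def:interlacing-triangle}, after which uniqueness and existence are both established.

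Condition (a) is immediate: for each fixed $k$, the $k$-th row consists of the entries $T^{(1)}_{j,k}$ for $1\le j\le k$, which is exactly $k$ entries, all equal to $1$, so the required multiset equality $\{1^k\}$ holds.

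For condition (b), the only symbol present is $a=1$, so I must show that between any two entries of row $k$ there lies an entry of row $k-1$; this I would check via the horizontal-coordinate formula. For rank $1$ we have $h(1,j,k)=\tfrac{n-k}{2}+j$, so row $k$ occupies the $k$ consecutive positions $p_j\coloneqq\tfrac{n-k}{2}+j$ ($j=1,\dots,k$), while row $k-1$ occupies the positions $q_j\coloneqq\tfrac{n-k}{2}+\tfrac12+j$ ($j=1,\dots,k-1$); thus $q_j=p_j+\tfrac12$, which lies strictly between the consecutive row-$k$ positions $p_j$ and $p_{j+1}=p_j+1$, and hence strictly between $p_j$ and $p_{j'}$ for any $j'>j$. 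Consequently every pair of entries in row $k$ has an interlacing entry in row $k-1$, so the interlacing condition holds. This shows the all-ones array is an interlacing triangular array, and it is the unique element of $\T_{1,n}$.

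The only "obstacle" is the routine bookkeeping with the horizontal coordinates in the last paragraph, which is a one-line computation; there is no real difficulty here.
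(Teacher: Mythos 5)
Your proof is correct and takes essentially the same approach as the paper's one-line argument, merely spelling out the verification of conditions (a) and (b) (the horizontal-coordinate computation $h(1,j,k)=\tfrac{n-k}{2}+j$ is right, as is the observation that row-$(k-1)$ positions sit halfway between consecutive row-$k$ positions).
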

\begin{proof}
    Clearly we must have $T^{(i)}_{j,k}=1$ for all $i,j,k$, and this is indeed an interlacing triangular array.
\end{proof}

\Cref{prop:m-equals-1-interlacing} corresponds to the fact that, for any of the bases appearing in \Cref{thm:k-theory,thm:dual-k,thm:ssm,thm:2-step}, the basis element corresponding to the top row $1^n$ is the multiplicative identity element of the ring in which it resides.

\begin{prop}
\label{prop:m-equals-2-interlacing}
For each $\lambda \in \{1,2\}^n$, there is a unique $T \in \T_{2,n}$ such that $T^{(1)}$ has top row $\lambda$. Furthermore, $T$ satisfies $T^{(2)}=(T^{(1)})^{\dagger}$.
\end{prop}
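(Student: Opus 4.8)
The plan is to settle uniqueness (and the identity $T^{(2)}=(T^{(1)})^{\dagger}$) using \Cref{lem:interlacing-side-triangles} and the counting condition \Cref{def:interlacing-triangle}(a), and then to establish existence by writing down the only possible candidate and checking directly that it is an interlacing triangular array.

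\textbf{Uniqueness.} Since $m=2$, the triangles $T^{(1)}$ and $T^{(2)}$ are exactly the leftmost and rightmost triangles, so \Cref{lem:interlacing-side-triangles} applies to both: $T^{(1)}_{j,k}=T^{(1)}_{j,n}=\lambda_j$ for all $1\le j\le k\le n$, and $T^{(2)}_{j,k}=T^{(2)}_{n-k+j,\,n}$, so $T$ is determined once the top row of $T^{(2)}$ is known. Subtracting the multiset equality of \Cref{def:interlacing-triangle}(a) for row $k-1$ from that for row $k$ leaves $\{\lambda_k\}\cup\{T^{(2)}_{n-k+1,\,n}\}=\{1,2\}$, hence $T^{(2)}_{n-k+1,\,n}=3-\lambda_k$; this determines the top row of $T^{(2)}$, and therefore all of $T$. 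Unwinding the two relations gives $T^{(2)}_{j,k}=3-\lambda_{k+1-j}$, which is precisely the $j$th entry of the dual of the word $(\lambda_1,\dots,\lambda_k)$; that is, $T^{(2)}=(T^{(1)})^{\dagger}$.

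\textbf{Existence.} Define $T$ by $T^{(1)}_{j,k}=\lambda_j$ and $T^{(2)}_{j,k}=3-\lambda_{k+1-j}$ for $1\le j\le k\le n$ (equivalently, $T^{(1)}$ is column-constant with top row $\lambda$ and $T^{(2)}=(T^{(1)})^{\dagger}$). Condition \Cref{def:interlacing-triangle}(a) is immediate, since the $k$th full row read left to right is the word $w^{(k)}=\lambda_1\cdots\lambda_k\,(3-\lambda_k)\cdots(3-\lambda_1)$, which has exactly $k$ ones and $k$ twos. For \Cref{def:interlacing-triangle}(b) one checks, for each $k\ge 2$, that $w^{(k)}$ and $w^{(k-1)}$ interlace with respect to the horizontal coordinates $h$. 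Three structural features make this routine: (i) $w^{(k)}$ is an anti-palindrome, $w^{(k)}_{2k+1-\ell}=3-w^{(k)}_{\ell}$, and the same symmetry carries row $k-1$ to itself; (ii) the left halves of $w^{(k)}$ and of $w^{(k-1)}$ are $\lambda_1\cdots\lambda_k$ and $\lambda_1\cdots\lambda_{k-1}$, and the entry $\lambda_j$ of $w^{(k-1)}$ has horizontal coordinate strictly between those of the $j$th and $(j{+}1)$st entries $\lambda_j,\lambda_{j+1}$ of $w^{(k)}$; and (iii) the two entries of $w^{(k)}$ with no counterpart in $w^{(k-1)}$ are the central pair $\lambda_k$ and $3-\lambda_k$, which occupy adjacent positions in the word and realize both values $1$ and $2$. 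Now take equal entries $w^{(k)}_p=w^{(k)}_q=a$ with $p<q$ and no entry equal to $a$ strictly between them. If both lie in the left half, the entry $\lambda_p$ of $w^{(k-1)}$ is an interlacing witness by (ii); if both lie in the right half, reduce to the previous case using the symmetry (i); and if one lies in each half, then by (iii) the hypothesis forces $p$ to be the last position of the left half or $q$ the first position of the right half (otherwise the central $1$ and $2$ would give an entry equal to $a$ strictly between $p$ and $q$), and in either case a suitable entry of $w^{(k-1)}$ next to that central position is again a witness. Hence $T\in\T_{2,n}$, and $T^{(1)}$ has top row $\lambda$ by construction. (Alternatively, one can build $T$ by induction on $n$, only ever having to verify \Cref{def:interlacing-triangle}(b) between the top two rows, which is the same check.)

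\textbf{Main obstacle.} Everything except \Cref{def:interlacing-triangle}(b) is forced by \Cref{lem:interlacing-side-triangles} together with the counting in part (a); the only real work is the coordinate bookkeeping in (b) — confirming that each proposed witness in $w^{(k-1)}$ genuinely has horizontal coordinate strictly between the two given entries of $w^{(k)}$, and ruling out the "straddling'' configurations by means of the central pair of entries.
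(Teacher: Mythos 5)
Your proof is correct and revolves around the same key tool as the paper, namely \Cref{lem:interlacing-side-triangles}, but it organizes the argument differently. The paper proves the proposition by induction on $n$: it strips off the outer columns $T^{(1)}_{1,\bullet}$ and $T^{(2)}_{k,k}$ (both forced by the lemma and the counting condition), and asserts that the remaining array is an interlacing triangular array of height $n-1$, from which the inductive hypothesis finishes. You instead observe that for $m=2$ both $T^{(1)}$ and $T^{(2)}$ are side triangles, so \Cref{lem:interlacing-side-triangles} determines both entirely from their top rows, and the row-by-row counting condition then forces $T^{(2)}_{n-k+1,n}=3-\lambda_k$, giving uniqueness and the formula $T^{(2)}=(T^{(1)})^{\dagger}$ simultaneously and without induction. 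This is arguably slicker for uniqueness. Your existence step is then a direct verification of \Cref{def:interlacing-triangle}(b), which the paper's induction sidesteps (at the cost of an asserted-but-unproved claim that the stripped-off remainder is interlacing); your version makes the whole proposition self-contained.

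One small imprecision in the last step of your existence check: in the straddling case you say the witness is an entry of $w^{(k-1)}$ ``next to that central position,'' but that is not where the witness actually sits. For instance, if $q=k+1$ and $p<k-1$, the witness is $T^{(1)}_{p,k-1}$ (the same witness as in your item (ii)), which is near $p$, not near the center; and the $p=k$ case then follows from this by the anti-palindrome symmetry you set up in (i), with witness $T^{(2)}_{q-k-1,k-1}$, which sits next to $w^{(k)}_q$. The argument is easily repaired with the tools you already introduced, so this is a local wording issue rather than a structural gap.
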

\begin{proof}
    Suppose the result is true for arrays of height $n-1$. And suppose without loss of generality that $\lambda_1=1$. Then any such $T$ has $T^{(1)}_{1,k}=1$ for all $k=1,\ldots,n$ and therefore has $T^{(2)}_{k,k}=2$ for all $k$, by \Cref{lem:interlacing-side-triangles}. Now observe that the remainder $S=\{T^{(1)}_{j,k} \mid 2 \leq j \leq k\} \cup \{T^{(2)}_{j,k} j \leq k-1 \geq 1\}$ of the array is in fact an interlacing triangular array of rank $2$ and height $n-1$, with $S^{(1)}$ having top row $(\lambda_2, \ldots, \lambda_n)$. The result follows by induction.
\end{proof}

For $\lambda \in \{1,2\}^n$ we have $\partn((\lambda, \lambda^{\dagger}))=(\xi, \xi^{\perp})$ for some $\xi$. \Cref{thm:k-theory,thm:dual-k,thm:ssm,thm:2-step} hold in the case $m=2$ since $\xi=(\xi^{\perp})^{\perp}$ and so $B_{\xi}=1 \cdot B_{(\xi^{\perp})^{\perp}}$ for any of the bases appearing in the theorems, agreeing with \Cref{prop:m-equals-2-interlacing}.

We now turn to the proofs for general $m$.

\begin{proof}[Proof of \Cref{thm:k-theory}]
    Let $\xi^{(1)},\ldots, \xi^{(m)}$ have content $0^d1^{n-d}$, define $\bs{\lambda}=\toprow(\bs{\xi})$, and let $\mc{G}_{m,n}(\bs{\lambda})$ denote the set of interlacing triangular arrays appearing in the theorem statement. If $m=1$ or $2$, then the theorem holds by the discussion above. Suppose that $m=3$. Then by a result of Vakil \cite[Thm.~3.6]{Vakil}, $g_{\bs{\xi}}$ is $(-1)^{d(n-d)-|\bs{\xi}|}$ times the number of $0/1/10$-puzzles $\tilde{P}$ with boundary conditions $\bs{\xi}$ which avoid the $\nabla$-oriented $10$-$10$-$10$ piece. By \Cref{prop:avoiding-nabla-10-10-10}, the bijection $\mathscr{T}$ maps the associated $1/2/3$-puzzles $P$ to interlacing triangular arrays $\mathscr{T}(P)$ with top row $\bs{\lambda}$ avoiding \raisebox{-0.5\height}{
\begin{tikzpicture}[scale=0.6,rotate=180]
    \draw[green!80,thin] (0,0)--(-0.5,.866);
    \node at (-0.5,.866) {$3$};
    \node at (0,0) {$1$};
\end{tikzpicture}} and 
\raisebox{-0.5\height}{
\begin{tikzpicture}[scale=0.6,rotate=180]
    \draw[green!80,thin] (0,0)--(-0.5,.866)--(0.5,.866)--(0,0);
    \node at (-0.5,.866) {$3$};
    \node at (0.5,.866) {$2$};
    \node at (0,0) {$3$};
\end{tikzpicture}} in $T^{(2)}$. The set of these arrays is exactly $\mc{G}_{3,n}(\bs{\lambda})$.

Now suppose that $m \geq 4$. By associativity and the definition of $g_{\bs{\xi}}$ we can write:
\begin{align*}
    \prod_{i=1}^{m-1} G_{\xi^{(i)}} &= (G_{\xi^{(1)}}G_{\xi^{(2)}})\prod_{i=3}^{m-1} G_{\xi^{(i)}} \\
    &=\sum_{\zeta} g_{(\xi^{(1)},\xi^{(2)},\zeta)} G_{\zeta^{\perp}} \prod_{i=3}^{m-1} G_{\xi^{(i)}} \\
    &=\sum_{\zeta, \xi^{(m)}} g_{(\xi^{(1)},\xi^{(2)},\zeta)}g_{(\zeta^{\perp},\xi^{(3)},\ldots,\xi^{(m)})} G_{(\xi^{(m)})^{\perp}}.
\end{align*}
By induction on $m$, both $g_{(\xi^{(1)},\xi^{(2)},\zeta)}$ and $g_{(\zeta^{\perp},\xi^{(3)},\ldots,\xi^{(m)})}$ are the (signed) sizes of the corresponding sets $\mc{G}_{3,n}(\hat{\lambda}^{(1)},\hat{\lambda}^{(2)},\mu)$ and $\mc{G}_{m-1,n}(\mu^{\dagger},\hat{\lambda}^{(3)},\ldots,\hat{\lambda}^{(m)})$ of arrays, where $\mu$ is defined by $\toprow(\xi^{(1)},\xi^{(2)},\zeta)=(\hat{\lambda}^{(1)},\hat{\lambda}^{(2)},\mu)$ and where $\toprow(\zeta^{\perp},\xi^{(3)},\ldots,\xi^{(m)})=(\mu^{\dagger},\hat{\lambda}^{(3)},\ldots,\hat{\lambda}^{(m)})$.

Consider the restriction of the splitting map $\splt$ (see \Cref{lem:splitting-interlacing}) from $\mc{T}_{m,n}(\bs{\lambda})$ to $\mc{G}_{m,n}(\bs{\lambda})$. For $T \in \mc{G}_{m,n}(\bs{\lambda})$ and $(R,S)=\splt(T)$, it is clear by construction that $R$ and $S$ also avoid the patterns from (\ref{eq:k-theory-thm-forbidden}), with indices shifted to match the supports of $R$ and $S$. Likewise, $\merge$ sends pairs of arrays avoiding these patterns to arrays avoiding the patterns. Note that for each $\zeta$, we have
\begin{align*}
    \left(d(n-d)-|(\xi^{(1)},\xi^{(2)},\zeta)|\right) &+ \left(d(n-d)-|(\zeta^{\perp},\xi^{(3)},\ldots,\xi^{(m)})|\right) \\&= 2d(n-d)-(|\zeta|+|\zeta^{\perp}|)-|\bs{\xi}| \\
    &=d(n-d)-|\bs{\xi}|.
\end{align*}
Thus, applying $\merge$ to the support-shifted sets of arrays from the previous paragraph, we conclude that $g_{\bs{\xi}}$ is $(-1)^{d(n-d)-|\bs{\xi}|}$ times $|\mc{G}_{m,n}(\bs{\lambda})|$.
\end{proof}

\begin{proof}[Proof of \Cref{thm:dual-k}]
The proof is very similar to the proof of \Cref{thm:k-theory}. Let $\mc{G}^{\ast}_{m,n}(\bs{\lambda})$ denote the set of interlacing triangular arrays appearing in the statement of \Cref{thm:dual-k}. The cases $m=1$ and $2$ are again covered by \Cref{prop:m-equals-1-interlacing,prop:m-equals-2-interlacing}. For $m=3$, we instead use a result \cite[Thm.~1' \& Rmk.~2]{wheeler-zinn-justin} of Wheeler--Zinn-Justin, which implies that $g^{\ast}_{\bs{\xi}}$ is $(-1)^{d(n-d)-|\bs{\xi}|}$ times the number of $0/1/10$-puzzles $\tilde{P}$ with boundary conditions $\bs{\xi}$ which avoid instead the \emph{$\mathit{\Delta}$-oriented} $10$-$10$-$10$ piece. By \Cref{prop:avoiding-delta-10-10-10}, the bijection $\mathscr{T}$ maps the associated $1/2/3$-puzzles $P$ to interlacing triangular arrays $\mathscr{T}(P)$ with top row $\bs{\lambda}$ avoiding \raisebox{-0.5\height}{
\begin{tikzpicture}[scale=0.6]
    \draw[green!80,thin] (0,0)--(-0.5,.866);
    \node at (-0.5,.866) {$3$};
    \node at (0,0) {$1$};
\end{tikzpicture}} and 
\raisebox{-0.5\height}{
\begin{tikzpicture}[scale=0.6]
    \draw[green!80,thin] (0,0)--(-0.5,.866)--(0.5,.866)--(0,0);
    \node at (-0.5,.866) {$3$};
    \node at (0.5,.866) {$2$};
    \node at (0,0) {$3$};
\end{tikzpicture}} in $T^{(2)}$. The set of these arrays is exactly $\mc{G}^{\ast}_{3,n}(\bs{\lambda})$. We can prove the cases $m \geq 4$ using \Cref{lem:splitting-interlacing} as we did in the proof of \Cref{thm:k-theory}.
\end{proof}

\begin{proof}[Proof of \Cref{thm:2-step}]
We again use the same proof strategy, although some care is now required to properly account for the different partial flag varieties involved. Let $\d, \bs{\lambda},$ and $\bs{w}$ be as in the theorem statement and define $\d'=(d_{m-2} \leq d_{m-1})$ and $\d''=(d_1 \leq \cdots \leq d_{m-2})$. Let $\mc{S}_{m,n}(\bs{\lambda})$ denote the set of interlacing triangular arrays from the theorem.

For $m=3$, the set $\mc{S}_{3,n}(\bs{\lambda})$ is in bijection with $0/1/10$-puzzles avoiding both the $\Delta$- and $\nabla$-oriented $10$-$10$-$10$ pieces by \Cref{prop:avoiding-delta-10-10-10,prop:avoiding-nabla-10-10-10}. According to the theorem statement, $\lambda^{(1)},\lambda^{(2)},\lambda^{(3)}$ have contents $2^{d_2}3^{n-d_2}, 1^{d_1}2^{n-d_2}3^{d_2-d_1},$ and $1^{n-d_1}3^{d_1}$, respectively. Thus, by \Cref{prop:puzzle-cryptomorphism}, the corresponding $0/1/10$-puzzles have left, right, and bottom boundary conditions $\xi^{(1)}=0^{d_2}1^{n-d_2}, \xi^{(2)}=0^{d_1}(01)^{d_2-d_1}1^{n-d_2},$ and $\xi^{(3)}=0^{d_1}1^{n-d_1}$, respectively.

By work of Halacheva--Knutson--Zinn-Justin \cite[Thm.~2]{halacheva-knutson-zinn-justin}, $0/1/10$-puzzles with no $10$-$10$-$10$ pieces and left, right, and bottom boundary conditions $\xi^{(3)},\xi^{(1)},$ and $\xi^{(2)}$ (notice the rotated indices!) compute structure constants for multiplying inside $H^{\ast}(\Fl(d_1,d_2;n))$ classes pulled back (under the natural projections $\Fl(d_1,d_2;n)\to \gr(d_i;n)$ for $i=1,2$) from $\gr(d_1,n)$ and $\gr(d_2,n)$. Since the set of allowed puzzle pieces is closed under $120^{\circ}$-rotation, and since this maps our boundary conditions to those of \cite{halacheva-knutson-zinn-justin}, this rotation gives a bijection between their puzzles and ours. In \Cref{thm:2-step}, we are considering the coefficient of (the dual of) a class pulled back from $\gr(d_1,n)$ in the product of a class pulled back from $\gr(d_2,n)$ and a Schubert class from $\Fl(d_1,d_2;n)$. Both of these geometric quantities can be expressed as the same product of three classes in $H^{\ast}(\Fl(d_1,d_2;n))$ in different orders, so they agree. Thus we have that, for $m=3$, the cardinality of the set $\mc{S}_{3,n}(\bs{\lambda})$ computes the desired structure constant $c_{\bs{w}}$. This result implies in particular that if $\mc{S}_{3,n}(\bs{\lambda})$ is nonempty, then $\ell(w^{(1)})+\ell(w^{(2)})+\ell(w^{(3)}) = \ell(w_0^{\d}) = \dim_{\C} (\Fl(\d;n))$ (in terms of puzzles, this also follows from \cite[Lem.~2.3]{Knutson-Zinn-Justin-1}).

Now suppose $m \geq 4$. We will again apply \Cref{lem:splitting-interlacing}; the $\splt$ and $\merge$ maps are again easily seen to respect the top row conditions and the avoided patterns. We need to check that the corresponding recurrence holds on the geometric side.

Suppose that $c_{\bs{w}}$ is nonzero, so $\sum_i \ell(w^{(i)}) = \dim_{\C} \Fl(\d;n)$. Then we have
\begin{equation}
\label{eq:d-step-partial-expansion}
    \prod_{i=1}^{m} \sigma_{w^{(i)}} = \sum_{u,v} c_{w^{(1)},w^{(2)}}^u c_{w^{(3)},\ldots,w^{(m)}}^v \sigma_{u} \sigma_{v},     
\end{equation}
where the sum runs over $u \in S_n^{\d'}$ and $v \in S_n^{\d''}$ such that $\ell(u)+\ell(v)= \dim_{\C} \Fl(\d;n)$. The product of two such Schubert classes is zero unless $v=u^{\vee_{\d}}$, so suppose we are in this case. We claim then that $u^{\vee_{\d'}}$ and $v^{\vee_{\d''}}$ in fact lie in $S_n^{\{d_{m-2}\}}$. We prove the first claim, the second being similar. If $d_{m-2}=d_{m-1}$, then $S_n^{\d'} =S_n^{\{d_{m-2}\}}$, so we are done. So suppose $d_{m-2}<d_{m-1}$. We know that $u^{\vee_{\d}}=v \in S_n^{\d''}$ has no descent at $d_{m-1}$. But $u^{\vee_{\d}}=w_0 u w_0(\d)$ and $u^{\vee_{\d'}}=w_0 u w_0(\d')$ differ by a permutation on $1,2,\ldots,d_{m-2}$, so the same is true of $u^{\vee_{\d'}}$. Thus $u^{\vee_{\d'}} \in S_n^{\d' \setminus \{d_{m-1}\}}=S_n^{\{d_{m-2}\}}$. Therefore the nonzero summands in (\ref{eq:d-step-partial-expansion}) contain (duals of) classes pulled back from $\gr(d_{m-2},n)$; this corresponds exactly to the conditions on the top row $\mu$ of the arrays in (\ref{eq:splitting-union}), so the result follows by induction.
\end{proof}

\begin{proof}[Proof of \Cref{thm:ssm}]
The proof again takes the same form as those of \Cref{thm:k-theory,thm:dual-k}. Now, for the $m=3$ case we use a result \cite[Thm.~5.4]{Knutson-Zinn-Justin-2} of Knutson--Zinn-Justin which implies that $(-1)^{d(n-d)-|\bs{\xi}|} s_{\bs{\xi}}$ is the number of $0/1/10$-puzzles $\tilde{P}$ with boundary conditions $\bs{\xi}$, now allowing all of the puzzle pieces. These are in bijection with the desired interlacing triangular arrays by \Cref{prop:puzzle-cryptomorphism} and \Cref{thm:psi-is-bijection}. \Cref{lem:splitting-interlacing} can again be applied to prove the cases $m \geq 4$.
\end{proof}

\section*{Acknowledgements}
We are grateful to Richard Stanley and Alexei Borodin for introducing us to this problem. We also thank Allen Knutson and Paul Zinn-Justin for their insightful comments.

\bibliographystyle{plain}
\bibliography{arxiv-v2}
\end{document}